\numberwithin{equation}{section}
\theoremstyle{plain}
\newtheorem{thm}{Theorem}[section]
\newtheorem{prop}{Proposition}[section]
\newtheorem{lem}{Lemma}[section]
\newcommand{\aaa}{C_{1}}
\newcommand{\bbb}{C_{2}}
\newcommand{\ccc}{C_{3}}
\newcommand{\ddd}{C_{4}}
\newcommand{\eee}{C_{5}}
\newcommand{\epsa}{\epsilon_{1}}
\newcommand{\epsagg}{\epsilon_{2}}
\newcommand{\epsb}{\epsilon_{3}}
\newcommand{\epsc}{\epsilon_{9}}
\newcommand{\epsd}{\epsilon_{4}}
\newcommand{\epsf}{\epsilon_{10}}
\newcommand{\epsg}{\epsilon_{2}}
\newcommand{\epsk}{\epsilon_{5}}
\newcommand{\epsp}{\epsilon_{6}}
\newcommand{\epsq}{\epsilon_{7}}
\newcommand{\epsr}{\epsilon_{8}}
\newcommand{\ssb}{D_{1}}
\newcommand{\ssc}{C_{1}}
\newcommand{\ssd}{D_{2}}
\newcommand{\sse}{D_{3}}
\newcommand{\ssg}{D_{5}}
\newcommand{\ssh}{D_{4}}
\newcommand{\ssi}{C_{1}}
\newcommand{\ssj}{C_{4}}
\newcommand{\ssk}{C_{1}}
\newcommand{\ssl}{C_{2}}
\newcommand{\ssm}{C_{3}}
\newcommand{\ffc}{F_{3}}
\newcommand{\ffd}{F_{4}}
\newcommand{\ffe}{F_{5}}
\newcommand{\ffg}{F_{6}}
\newcommand{\ffh}{F_{7}}
\newcommand{\ffi}{F_{8}}
\newcommand{\ffj}{F_{9}}
\newcommand{\ffk}{F_{10}}
\newcommand{\gga}{G_{1}}
\newcommand{\ggb}{G_{2}}
\newcommand{\ggc}{G_{3}}
\newcommand{\ggd}{G_{4}}
\newcommand{\gge}{G_{5}}
\newcommand{\hha}{H_{1}}
\newcommand{\hhb}{H_{2}}
\newcommand{\hhc}{H_{3}}
\newcommand{\hhd}{H_{4}}
\newcommand{\hhe}{H_{5}}
\begin{document}

\begin{frontmatter}
\title{ A Positive Recurrent Reflecting Brownian Motion with Divergent Fluid Path}
\runtitle{Recurrent SRBM with Divergent Fluid Path}
\begin{aug}
\author{\fnms{Maury} \snm{Bramson}\thanksref{t1}\ead[label=e1]
{bramson@math.umn.edu}}

\thankstext{t1}{Supported in part by NSF Grant CCF-0729537}
\runauthor{Maury Bramson}

\affiliation{University of Minnesota, Twin Cities} 

\address{University of Minnesota\\
Twin Cities Campus\\
School of Mathematics\\
Institute of Technology\\
127 Vincent Hall\\
206 Church Street S.E.\\
Minneapolis, Minnesota 55455\\
\printead{e1}\\
\phantom{E-mail:\ }}

\end{aug}

\begin{abstract}
Semimartingale reflecting Brownian motions (SRBMs) are diffusion processes with state space the 
$d$-dimensional nonnegative orthant, in the interior of which the processes evolve according to a Brownian 
motion, and that reflect against the boundary in a specified manner.  The data for such a process are a drift vector
$\theta$, a nonsingular $d\times d$ covariance matrix $\Sigma$, and a $d\times d$ reflection matrix $R$.  
A standard problem is to determine under what conditions the process is positive recurrent.
Necessary and sufficient conditions for positive recurrence are easy to formulate for $d=2$, but not for $d>2$.

Associated with the pair $(\theta,R)$ are fluid paths, which are solutions of deterministic equations corresponding
to the random equations of the SRBM.  A standard result of Dupuis and Williams \cite{DW} states that when every fluid path 
associated with the SRBM is attracted to the origin, the SRBM is positive recurrent.  Employing this result, El 
Kharroubi et al. \cite{EK1, EK2} gave sufficient conditions on $(\theta,\Sigma,R)$ for positive recurrence for
$d=3$; Bramson et al. \cite{BDH} showed that these conditions are, in fact, necessary.

Relatively little is known about the recurrence behavior of SRBMs for $d>3$.  This pertains, in particular, to 
necessary conditions for positive recurrence.  Here, we provide a family of examples, in $d=6$, with
$\theta=(-1,-1,\ldots,-1)^T$, $\Sigma=I$ and appropriate $R$, that are positive recurrent, but for which a linear fluid
path diverges to infinity.  These examples show in particular that, for $d\ge6$, the converse of the Dupuis-Williams
result does not hold.
\end{abstract}

\begin{keyword}[class=AMS]
\kwd{60K25, 68M20, 90B15}
\end{keyword}

\begin{keyword}
\kwd{Reflecting Brownian motion, fluid model, queueing networks, heavy traffic limits.}
\end{keyword}

\end{frontmatter}

\section{Introduction}
This paper is concerned with the class of $d$-dimensional diffusion processes known as semimartingale reflecting
Brownian motions (SRBMs).  Such processes arise as approximations for open $d$-station queueing networks 
(see, e.g., Harrison and Nguyen \cite{HN} and Williams \cite{W1, W2}).  The state space for a process 
$Z=\{Z(t), t\ge0\}$ in this class is $S=\mathbb{R}^{d}_{+}$, the nonnegative orthant.  The data of the process consists
of a drift vector $\theta$, a nonsingular covariance matrix $\Sigma$, and a $d\times d$ reflection matrix $R$ that
specifies the boundary behavior.  In the interior of the orthant, $Z(\cdot)$ 
behaves as an ordinary Brownian motion
with parameters $\theta$ and $\Sigma$ and, roughly speaking, $Z(\cdot)$ is pushed in direction $R^k$ whenever
the boundary $\{z\in S: z_k = 0\}$ is hit, for $k=1,\ldots,d$, where $R^k$ is the $k^{\text{th}}$ column of $R$.  
The process is Feller \cite{TW} and so is strong Markov. 

A precise description for $Z(\cdot)$ is given by 
\begin{equation}
\label{eq1.2.1}
Z(t) = Z(0) +B(t) + \theta t + RY(t), \qquad t\ge0,
\end{equation}
where $B(\cdot)$ is an unconstrained Brownian motion with covariance 
vector $\Sigma$ and no drift, with $B(0)=0$,
and $Y(\cdot)$ is a $d$-dimensional process with components $Y_{1}(\cdot)$,\ldots,$Y_{d}(\cdot)$ such that
\begin{align}
\label{eq1.2.2}
& Y(\cdot) \text{ is continuous and nondecreasing, with } Y(0)=0, \\
\label{eq1.2.3}
& Y_{k}(\cdot) \text{ only increases at times t at which } Z_{k}(t)=0, \quad k=1,\ldots,d, \\
\label{eq1.2.4}
& Z(t) \in S \text{ for all } t\ge0.
\end{align}
(Display (\ref{eq1.2.3}) means that $Y_k(t_2) > Y_k(t_1)$, for $t_2 > t_1$, 
implies $Z_k(t)=0$ at some $t\in[t_1,t_2]$.)
For a SRBM with data $(\theta,\Sigma,R)$ to exist, it is necessary and sufficient that $R$ be
\emph{completely-$S$}.  Completely-$S$ means that each principal submatrix $R^{\prime}$ is an \emph{$S$-matrix},
that is, for some $w\ge0$, $R^{\prime}w >  0$ holds.  The complete definition and basic properties of $Z(\cdot)$ are
reviewed in Appendix A of Bramson et al. \cite{BDH}.

A SRBM is said to be \emph{positive recurrent} if the expected time to hit an arbitrary open neighborhood of the origin
is finite for every starting state.  A necessary and sufficient condition for positive recurrence, for $d=2$, is that
\begin{equation}
\label{eq1.3.1}
R \text{ is nonsingular with } R^{-1}\theta < 0
\end{equation}
and that $R$ is a $P$-matrix (El Kharroubi et al. \cite{EK1}).  (That is, each principal submatrix of $R$ has a 
positive determinant.)  Necessary and sufficient conditions, for $d=3$, are known, but are more complicated.  El Kharroubi
et al. \cite{EK2} gave sufficient conditions; Bramson et al. \cite{BDH} showed these conditions are necessary.
Another proof of the sufficiency of these conditions was recently given in Dai and Harrison \cite{DH}.  In the special case where
$R$ is an $M$-matrix, (\ref{eq1.3.1}) is necessary and sufficient for positive recurrence in all $d$ (Harrison and Williams 
\cite{HW}); (\ref{eq1.3.1}) is always necessary for positive recurrence (\cite{EK1}).

Associated with the parameters $\theta$ and $R$ are \emph{fluid paths}, which are solutions of deterministic equations
corresponding to (\ref{eq1.2.1})--(\ref{eq1.2.4}).  More precisely, a fluid path is a pair of continuous functions
$y,z: [0,\infty) \rightarrow \mathbb{R}^d$ that satisfy
\begin{align}
\label{eq1.4.1}
& z(t) = z(0) + \theta t + Ry(t) \text{ for all } t\ge 0, \\
\label{eq1.4.3}
& y(\cdot) \text{ is continuous and nondecreasing, with } y(0)=0, \\
\label{eq1.4.4}
& y_{k}(\cdot) \text{ only increases at times } t \text{ at which } z_{k}(t)=0, \quad k=1,\ldots,d, \\
\label{eq1.4.2}
& z(t) \in S \text{ for all } t\ge 0. 
\end{align}
A fluid path $(y,z)$ is \emph{attracted to the origin} if $z(t)\rightarrow 0$ as $t\rightarrow \infty$; it is
\emph{divergent} if $|z(t)| \rightarrow \infty$ as $t\rightarrow \infty$ 
(where $|u| \stackrel{\text{def}}{=} \Sigma_i |u_i|$, for $u = (u_i) \in \mathbb{R}^d$). 

The following result gives a sufficient condition for positive recurrence of an SRBM in terms of the associated
fluid paths.
\begin{thm} [Dupuis-Williams]
\label{thm1.4.3}
Let $Z(\cdot)$ be a $d$-dimensional SRBM with data $(\theta,\Sigma,R)$.  If every fluid path associated with
$(\theta,R)$ is attracted to the origin, then $Z(\cdot)$ is positive recurrent.%
\end{thm}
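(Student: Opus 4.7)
The plan is to use a fluid-scaling argument to convert the deterministic attraction hypothesis into a stochastic geometric drift estimate for the SRBM, and then invoke a Foster-type criterion to obtain positive recurrence.

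For each $z \in S$, let $Z^{z}(\cdot)$ denote the SRBM with $Z^{z}(0)=z$ and corresponding regulator $Y^{z}(\cdot)$. Fix $z_{0} \in S$ with $|z_{0}|=1$ and, for $x>0$, introduce the fluid-scaled processes $\bar Z^{x}(t) := Z^{xz_{0}}(xt)/x$ and $\bar Y^{x}(t) := Y^{xz_{0}}(xt)/x$. Then (\ref{eq1.2.1}) rescales to
\begin{equation*}
\bar Z^{x}(t) = z_{0} + x^{-1}B(xt) + \theta t + R\,\bar Y^{x}(t),
\end{equation*}
and $x^{-1}B(xt) \to 0$ uniformly on compact sets, almost surely along any subsequence, by the functional LLN for Brownian motion. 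Standard moment bounds on the SRBM regulator (available since $R$ is completely-$S$) give tightness of $\{\bar Y^{x}\}$ in $C([0,\infty),\mathbb{R}^{d})$. Hence along any sequence $x_{n}\to\infty$ one can extract a further subsequence on which $(\bar Z^{x_{n}},\bar Y^{x_{n}}) \to (z,y)$ uniformly on compacts, almost surely, and the limit $(y,z)$ satisfies (\ref{eq1.4.1})--(\ref{eq1.4.2}) with $z(0)=z_{0}$, i.e.\ is a fluid path.

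Next I combine the hypothesis $z(t)\to 0$ with compactness of the initial-condition set $\{z_{0}\in S:|z_{0}|=1\}$ to upgrade pointwise attraction to uniform attraction: there exist $T<\infty$ and $x_{0}>0$ such that for every $z\in S$ with $|z|\ge x_{0}$,
\begin{equation*}
E\bigl[|Z^{z}(T|z|)|\bigr] \le \tfrac{1}{2}|z|.
\end{equation*}
This is a one-step geometric drift inequality for the Lyapunov function $V(z)=|z|$. Iterating it, together with standard estimates for the Feller strong Markov process $Z$ on the compact set $\{|z|\le x_{0}\}$, yields $E_{z}[\tau_{U}]<\infty$ for every starting state $z\in S$ and every open neighborhood $U$ of the origin, which is the definition of positive recurrence.

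The main obstacle is the promotion of pointwise attraction to uniform attraction. Fluid paths are not in general unique from a given initial condition when $R$ is merely completely-$S$, so continuous dependence on initial data is not automatic. I would proceed by contradiction: a failure of uniformity produces a sequence $(y^{n},z^{n})$ of fluid paths with $|z^{n}(0)|=1$ and $|z^{n}(T_{n})|\ge \varepsilon$ for some $\varepsilon>0$ and $T_{n}\to\infty$; tightness of the regulators $y^{n}$ on compact time intervals, together with a diagonal extraction, produces a limiting pair which is again a fluid path (the class is closed under uniform-on-compacts limits) but which fails to be attracted to $0$, contradicting the hypothesis. A secondary technical point is transferring the deterministic bound $|z(T)|\le 1/4$ back to the expectation bound above; here one needs uniform integrability of $|\bar Z^{x}(T)|$, which follows from moment bounds on $\bar Y^{x}$ and on $x^{-1}B(xt)$.
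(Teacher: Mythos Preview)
The paper does not prove this theorem; it is quoted as the result of Dupuis and Williams \cite{DW} and used as a black box. There is therefore no proof in the paper to compare your attempt against.

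On its own merits, your sketch is a plausible fluid-limit route (closer in spirit to Dai's method for queueing networks than to the direct Lyapunov-function construction actually carried out in \cite{DW}), but the uniform-attraction step contains a genuine gap. You argue by contradiction: if no single $T$ works, you take fluid paths $z^{n}$ with $|z^{n}(0)|=1$, $|z^{n}(T_{n})|\ge\varepsilon$, $T_{n}\to\infty$, extract a subsequential limit $z^{\infty}$, and assert that $z^{\infty}$ is not attracted to $0$. But the convergence $z^{n}\to z^{\infty}$ is only uniform on compact time intervals, so the information $|z^{n}(T_{n})|\ge\varepsilon$ at times $T_{n}\to\infty$ says nothing about $z^{\infty}$ at any finite time; the limit may perfectly well satisfy $z^{\infty}(t)\to 0$. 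Closing this requires an additional idea, typically the invariance of the fluid-path class under time shifts and spatial rescaling: one recenters around a time $s_{n}\le T_{n}$ at which $|z^{n}(s_{n})|$ is of order one, passes to a limit of the shifted/rescaled paths, and obtains a fluid path that fails to be attracted to the origin. Without such a recentering step, the diagonal extraction as written does not produce the desired contradiction.
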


Theorem \ref{thm1.4.3} provides an important ingredient for demonstrating the sufficiency of the conditions in 
\cite{EK2} for positive recurrence of an SRBM, for $d=3$, that were alluded to above.  An open question is whether
a converse of Theorem \ref{thm1.4.3} holds for $d>3$, that is, whether $Z(\cdot)$ positive recurrent implies that
every fluid path is attracted to the origin.

A fluid path $(y,z)$ is \emph{linear} if $y(t)=ut$ and $z(t)=vt$ for given vectors $u,v\ge0$.  ($u\ge0$ 
means $u_i\ge0$ for $i=1,\ldots,d$.)  When $y(\cdot)$ and $z(\cdot)$ are linear,
the fluid path properties (\ref{eq1.4.1})-(\ref{eq1.4.2}) can be expressed as solutions of the linear 
complementarity problem
\begin{equation}
\label{eq1.5.1}
u,v\ge0, \qquad v = \theta + Ru, \qquad  u\cdot v = 0,
\end{equation}
%
%
where $u\cdot v \stackrel{\text{def}}{=} \sum_i u_{i}v_{i}$.  A solution $(u,v)$ of (\ref{eq1.5.1})
is \emph{stable} if $v=0$ and \emph{divergent} otherwise.  It is \emph{nondegenerate} if $u$ and $v$ together have
exactly $d$ positive components, and it is \emph{degenerate} otherwise.  It is easy to see that, for a converse to
Theorem \ref{thm1.4.3} to hold, all linear fluid paths associated with a positive recurrent SRBM must be stable.

In this article, we provide a family of examples, in $d=6$, for which the SRBM is positive recurrent, yet possesses
a divergent linear fluid path.  We set
\begin{equation}
\label{eq1.6.0}
\theta= (-1,-1,\ldots,-1)^T, \qquad  \Sigma= I 
\end{equation}
(where ``$\,^T\,$" denotes the transpose), and denote by $R$ the $6\times 6$ matrix with
\begin{equation}
\label{eq1.6.1}
R=J_1+J_2,
\end{equation}
where $J_1$ satisfies $(J_1)_{i,j} = 1$, for $i,j=1,\ldots,6$, and
\begin{equation}
\label{eq1.6.2}
J_2 =
\begin{bmatrix}  0&\delta_2&\delta_2&\delta_2&\delta_2&-\delta_4\\
0&0&-\delta_3&-\delta_3&-\delta_3&-\delta_3\\
0&-\delta_3&0&-\delta_3&-\delta_3&-\delta_3\\
0&-\delta_3&-\delta_3&0&-\delta_3&-\delta_3\\
0&-\delta_3&-\delta_3&-\delta_3&0&-\delta_3\\
\delta_1&-\delta_3&-\delta_3&-\delta_3&-\delta_3&0\\
\end{bmatrix}
.
\end{equation}
Here, we assume that $\delta_i>0$, $i=1,\ldots,4$, with
\begin{equation}
\label{eq1.6.3}
\delta_2 + \delta_3 \le \tfrac{1}{6}\delta_4
\end{equation}
and
\begin{equation}
\label{eq1.6.4}
\delta_1 \le \delta_3 \le .1, \qquad  \delta_4 < 1. 
\end{equation}
One can, for example, choose
\begin{equation}
\label{eq1.7.1}
\delta_1 = \delta_2 = \delta_3 = .05, \qquad \delta_4 = .6.
\end{equation}

The matrix $R$ has been chosen so that $R_{i,i} = 1$ for $i=1,\ldots,6$.  The roles of the coordinates
$i=2,\ldots,5$ with respect to $R$ are indistinguishable, and the role of $i=6$ differs from those of
$i=2,\ldots,5$ only in its interaction with the coordinate $i=1$ through $R_{1,6}$ and $R_{6,1}$.  Since
all entries of $R$ are positive, it is immediate that $R$ is completely-$S$.  The role of the relations in
(\ref{eq1.6.3})--(\ref{eq1.6.4}) will be explained in the next subsection.

The main result in this article is the following theorem. 
\begin{thm} 
\label{thm1.7.2}
Let $Z(\cdot)$ denote the SRBM with $\theta=(-1,-1,\ldots,-1)^T$, $\Sigma = I$ and $R$ satisfying 
(\ref{eq1.6.1})--(\ref{eq1.6.4}).  Then $Z(\cdot)$ is positive recurrent, but possesses a divergent linear fluid path.
\end{thm}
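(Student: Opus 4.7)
The theorem has two assertions: existence of a divergent linear fluid path, and positive recurrence.

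\textbf{Divergent linear fluid path.} A short calculation in the linear complementarity problem (\ref{eq1.5.1}) suffices. Take $u = e_{1}$, the first standard basis vector. Then
\[
v = \theta + R u = -\mathbf{1} + R^{1} = -\mathbf{1} + (1,1,1,1,1,\,1+\delta_{1})^{T} = \delta_{1}\, e_{6},
\]
so $u,v \ge 0$, $u\cdot v = 0$, and $v \ne 0$ since $\delta_{1}>0$. The linear fluid path $(y(t),z(t)) = (te_{1},\,t\delta_{1} e_{6})$ is therefore divergent.

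\textbf{Positive recurrence.} Dupuis--Williams is unavailable by the previous step. A positive linear Lyapunov function $V(z) = c\cdot z$ with $c>0$ also fails: condition (\ref{eq1.6.4}) forces every entry of $R$ to be strictly positive, so $c^{T} R$ has strictly positive entries and reflection at every boundary makes $V$ grow at a positive rate, potentially overwhelming the negative interior drift $c\cdot\theta$. My plan is therefore to construct a non-linear Lyapunov function $V$ adapted to the negative entries of $J_{2}$ and verify that $\mathcal{L}V \le -\epsilon$ outside a bounded set; a Foster--Lyapunov argument then gives positive recurrence. The stabilizing mechanism is that reflection at each boundary $k\in\{2,\ldots,5\}$ contributes $-\delta_{3}$ to $Z_{6}$ (through the entries $(J_{2})_{6,k}=-\delta_{3}$), and reflection at boundary $6$ contributes $-\delta_{4}$ to $Z_{1}$. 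These counteract the destabilizing contribution $+\delta_{1}$ to $Z_{6}$ coming from reflection at boundary $1$, which is exactly the mechanism driving the divergent fluid path above. The inequalities (\ref{eq1.6.3})--(\ref{eq1.6.4}), particularly $\delta_{1}\le\delta_{3}$ and $\delta_{2}+\delta_{3}\le\tfrac{1}{6}\delta_{4}$, are tuned precisely so that the stabilizing pushes dominate. Natural candidates for $V$ are a piecewise-linear function of the form $V(z)=\max_{j} c_{j}\cdot z$ with different weights active in different regions of $S$, or a quadratic $V(z)=z^{T}Q z$ chosen so that a face-by-face computation of $\mathcal{L}V$ yields a net negative contribution on each face of the orthant.

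\textbf{Main obstacle.} The delicate step is to quantify the local-time processes $Y_{2}(\cdot),\ldots,Y_{5}(\cdot)$ at the stochastic level. In the fluid picture, $y(t)=te_{1}$ is consistent with $z_{k}(t)>0$ for $k\ge 2$, so no drag on $Z_{6}$ arises from those boundaries. In the SRBM, however, whenever the process stays close to the divergent trajectory, the Brownian fluctuations of $B_{2},\ldots,B_{5}$, combined with the negative drift $\theta_{k}=-1$, drive each of $Z_{2},\ldots,Z_{5}$ to its boundary with positive local-time intensity on every time interval, independently of what $Z_{1}$ and $Z_{6}$ do. Turning this intuition into a rigorous lower bound on $\sum_{k=2}^{5} Y_{k}(t)$, matched against the upper bound on $(1+\delta_{1})Y_{1}(t)$ forced by the constraint $Z_{1}(t)\ge 0$, and assembling both into a uniform estimate for $\mathcal{L}V$, is the main technical task.
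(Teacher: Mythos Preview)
Your verification of the divergent linear fluid path is correct and matches the paper exactly.

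Your plan for positive recurrence, however, cannot succeed as stated. The existence of the divergent linear fluid path is precisely an obstruction to any Lyapunov function satisfying an infinitesimal condition of the type $\mathcal{L}V\le -\epsilon$ together with the standard oblique-derivative boundary inequalities $R^{k}\cdot\nabla V\le 0$ on $\{z_{k}=0\}$. Along the fluid path $z(t)=t\delta_{1}e_{6}$, $\dot z=\theta+Re_{1}$, and those two conditions would force $\tfrac{d}{dt}V(z(t))=\nabla V\cdot\theta+\nabla V\cdot R^{1}$ to be bounded above by a fixed negative constant (the second-order term is bounded for quadratic $V$ and absent for piecewise-linear $V$), contradicting $V(z(t))\to\infty$. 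A direct check for quadratic $V=z^{T}Qz$ gives, at $z=t\delta_{1}e_{6}$, the incompatible pair $\sum_{j}Q_{j6}>0$ (interior drift) and $\sum_{j}Q_{j6}+\delta_{1}Q_{66}\le 0$ (face-$1$ condition), forcing $Q_{66}<0$. So neither of your two candidate classes can work, and this is not a technical difficulty but the content of the theorem: the Dupuis--Williams mechanism fails here.

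What the paper does instead is abandon the generator entirely and use a Foster criterion with \emph{random} stopping times $\sigma(z)$: one shows $E_{z}[\|Z(\sigma(z))\|]\le(\|z\|\vee\kappa)-\epsilon\,E_{z}[\sigma(z)]$ for the mixed norm $\|z\|=z_{1}+\sum_{k=2}^{5}z_{k}^{2}+z_{6}$. The stopping time $\sigma(z)$ is built piecewise over several phases of the evolution, and the argument hinges on a genuinely stochastic ingredient with no fluid analogue: the Ratzkin--Treibergs theorem that four independent Brownian pursuers capture a Brownian prey in finite expected time. This is exactly what supplies the lower bound on $\sum_{k=2}^{5}Y_{k}(t)$ you identify as the main obstacle --- once one of $Z_{2},\dots,Z_{5}$ reaches $0$ (the ``capture''), the reflection there drives $Z_{1}$ up and ultimately $Z_{6}$ down. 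Your intuition in the last paragraph is on the right track, but the bound cannot be packaged into a pointwise estimate on $\mathcal{L}V$; it is a statement about hitting times and requires the pursuit result as an external input.
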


One can check that $(u,v)$, with $u=e_1$ and $v = \delta_{1}e_{6}$, defines a divergent linear fluid path  
($e_i$ denotes the $i^{\text{th}}$ unit vector).
Since $u$ and $v$ together have a total of two positive components, the fluid
path is degenerate.  (Related divergent fluid paths are easy to construct: for example, $(y,z)$ with 
$y(t) = e_{1}t$ and $z(t) = \sum_{k=2}^{5}e_k +\delta_{1}e_{6}t$.)
In order to demonstrate Theorem \ref{thm1.7.2}, it suffices to show $Z(\cdot)$ is
positive recurrent. 

Similar examples exist that satisfy the analog of Theorem \ref{thm1.7.2}, 
but with $d>6$.  One can construct such examples by inserting additional
coordinates $Z_i(\cdot)$ that are independent of $Z_1(\cdot),\ldots, Z_6(\cdot)$, with $\theta_i = -1$ 
and $R_{i,i} = 1$.   

In the remainder of the section, we summarize how the matriz $R$ affects the evolution of $Z(\cdot)$
and leads to its positive recurrence.  We also outline the rest of the paper. 

\subsection*{Sketch of positive recurrence}
The reflection matrix $R$ that we have chosen has the following properties, which we will use in the
next three paragraphs. For $\theta$ given by 
(\ref{eq1.6.0}), all of the coordinates $Z_{k}(\cdot), k=1,\ldots,6$, have drift $-1$, 
which is compensated for by $R$, which pushes a coordinate away from $0$ whenever any of the coordinates
is being reflected there.  (Although the motion induced by $R$ is not absolutely continuous, we will also refer
to it as ``drift" here.)  Because of the choice of $\theta$, for $k,k^{\prime}=2,\ldots,5$, 
\begin{equation*}
Z_{k^{\prime}}(\cdot) - Z_{k}(\cdot) \text{ has no drift} 
\end{equation*}
except when one of the coordinates is being reflected; when the coordinate $k$ is reflected, the difference has
negative drift because of the term $\delta_3$ in $J_2$.  Also, for $k=2,\ldots,5$,
$Z_6(\cdot) - Z_{k}(\cdot)$  has no drift  
except when $Z_k(\cdot)$ is reflected, in which case the difference has negative drift, or when $Z_6(\cdot)$ or 
$Z_1(\cdot)$ is reflected, in which case it has positive drift, the last case occurring because of the
term $\delta_1$.  On the other hand, when the first coordinate is being reflected, for $k=2,\ldots,5$,
$$ Z_1(\cdot) - Z_{k}(\cdot) \text{ has no drift} $$
and, when one of the other four coordinates $k=2,\ldots,5$ is being reflected, 
the difference has positive drift because of the term $\delta_2$ in $J_2$.  
But, when $Z_{6}(\cdot)$ is reflected, 
the difference acquires a negative drift because of the term $\delta_4$ in $J_2$ and (\ref{eq1.6.3}).

The process $Z(\cdot)$ is positive recurrent, although its deterministic analog $z(\cdot)$ possesses a 
divergent linear fluid path in the direction $e_6$
when $u=e_1$.  This difference in behavior occurs due to the following interaction between the different coordinates 
of $Z(\cdot)$.  When $Z_{1}(\cdot)$ is close to $0$
(for instance, when $Z_{k}(\cdot)$, $k=2,\ldots,5$, are larger), it may remain small for an extended period of time, 
with the other coordinates perhaps increasing.  Nonetheless, as we will see, 
after a finite expected time, one of the coordinates $k$, $k=2,\ldots,5$,
will hit $0$.  Because of the reflections against 0 by this coordinate and perhaps by the other three 
coordinates, the coordinate $k=1$ will acquire, on the average, a positive drift and therefore increase linearly.  
When this occurs, each of the coordinates $k=2,\ldots,5$ will drift towards $0$ and afterwards remain close to $0$.

The sixth coordinate increases linearly in time
when the first coordinate undergoes repeated reflection.  However, when the first coordinate is instead increasing,
the sixth coordinate will drift back to $0$ on account of the terms $(J_2)_{6,j} = -\delta_3$, $j=2,\ldots,5$.  
Moreover, on account of (\ref{eq1.6.3}), the term $(J_2)_{1,6} = -\delta_4$ is sufficiently smaller than $-\delta_2$
so that, when the sixth coordinate 
starts reflecting at $0$, the negative drift induced in the first coordinate more than compensates for 
the positive drift induced in the first coordinate by the reflection of the other four coordinates.
As a consequence, the first coordinate acquires a negative net drift.  After this occurs,
the coordinates $k=2,\ldots,6$ will all remain close to $0$ until the first coordinate hits $0$, in which case 
the behavior outlined above can repeat.  This behavior prevents any of the coordinates from typically 
moving too far from $0$, and will ensure that the system is positive recurrent.   

The proof of Theorem \ref{thm1.7.2} is organized as follows.  In Section 2, we give a number of bounds on $Y(\cdot)$
and $Z(\cdot)$ that are derived by applying elementary Brownian motion estimates to (\ref{eq1.2.1}).  These bounds are
employed in the rest of the paper. In Section 3, we demonstrate a version of Foster's criterion that will be used here.
We also recall and then employ the main result in Ratzkin and Treibergs \cite{RT}, which states that for a Brownian
pursuit problem, the presence of four ``predators" is enough for them to capture the ``prey" in finite expected time.
In our context, $Z_{k}(\cdot), k=2,\ldots,5$, will play the role of the predators and $Z_{1}(\cdot)$ will play 
the role of the prey.
This behavior will justify the claim in the above discussion that one of the coordinates with $k=2,\ldots,5$ will hit $0$
after a finite expected time.  

In Section 4, we state the main steps in the proof of Theorem \ref{thm1.7.2} in the form of a series
of five propositions, and show how the theorem follows
from them.  Depending on whether or not $Y_1(\cdot)$ is initially growing quickly, 
Proposition \ref{prop4.2.1} states that, during this time, either the coordinates 
$Z_2(\cdot),\ldots,Z_6(\cdot)$ decrease by an appropriate factor or 
$Z_6(\cdot)$ increases linearly.  In the first case, it follows from Proposition \ref{prop4.5.1} that
$Z_1(\cdot)$ will also remain small and so, as desired, the norm of the SRBM decreases by a factor over 
the time interval.  In the second case, the argument proceeds along the lines sketched above in the comparison of $Z(\cdot)$
with the divergent fluid path, and employs Propositions \ref{prop4.6.1}, \ref{prop4.21.1} and \ref{prop4.8.1}.  

In Section 5, we demonstrate Propositions \ref{prop4.2.1} and \ref{prop4.5.1} and, in Section 6, we demonstrate
Propositions \ref{prop4.6.1}, \ref{prop4.21.1} and \ref{prop4.8.1}.  The reasoning employs the interaction of the 
different components $Z_{k}(\cdot)$, $k=1,\ldots,6$,  and draws from the different bounds in Sections 2 and 3.  
\section{Basic estimates}
In this section, we give a number of elementary bounds that will be used in the remainder of the article.  In Lemma
\ref{lem2.1.1}, we give bounds on standard 
one dimensional Brownian motion $B(\cdot)$.  
(All of the bounds in the lemma hold in greater generality;
see, e.g., \cite{LT}, page 59, and \cite{L}.)
These bounds will then be applied in the
rest of the section to obtain bounds on the quantities $Y(\cdot)$ and $Z(\cdot)$ in (\ref{eq1.2.1}), the
equation describing the evolution of SRBM.   
Here and elsewhere in the paper, the notation $C_1,C_2,\ldots$ will be employed
for positive constants whose precise value is not of interest to us, with the same symbol often being reused.
\begin{lem}
\label{lem2.1.1}
Let $B(\cdot)$ denote a standard Brownian motion.  Then, for each $t\ge0$,
\begin{equation}
\label{eq2.1.2}
E\left[ \max_{0\le s\le s^{\prime}\le t} (B(s^{\prime}) - B(s))^2\right] \le 8t.
\end{equation}
For given $\epsilon > 0$, there exist $\aaa$, $\epsilon^{\prime}>0$ such that, for each $t\ge0$, 
\begin{equation}
\label{eq2.1.2'}
P\left(\max_{0\le s\le t} |B(s)| \ge \epsilon t \right) \le \aaa e^{-\epsilon^{\prime}t}.
\end{equation}
For given $\epsilon > 0$, there exist $\aaa$, $\epsilon^{\prime} > 0$ such that, for each $u\ge0$,
\begin{equation}
\label{eq2.1.3}
P\left(\inf_{t\ge 0} (\epsilon t + u - |B(t)|) \le 0 \right) \le \aaa e^{-\epsilon^{\prime}u},
\end{equation}
and, for each $u>0$ and $t\ge 0$,
\begin{equation}
\label{eq2.1.4}
P\left( \min_{0\le s\le s^{\prime}\le t} (\epsilon (s^{\prime} -s) + u - |B(s^{\prime}) - B(s)|)\le 0\right) \le 
\aaa (t+1)e^{-\epsilon^{\prime}u}.
\end{equation}
\end{lem}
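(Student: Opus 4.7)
The plan is to derive each of the four bounds from standard one-dimensional Brownian motion facts: Doob's $L^2$ maximal inequality, the reflection principle combined with the Gaussian tail bound $P(B(t)\ge a)\le \exp(-a^2/(2t))$, and a union bound over integer time intervals.

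For (\ref{eq2.1.2}), I would start from the pointwise inequality $|B(s')-B(s)|\le 2\sup_{0\le r\le t}|B(r)|$, square, take expectations, and apply Doob's $L^2$ inequality, which gives a bound of the form $C\,t$; the precise constant $8$ can be recovered by splitting the maximum as $\sup B-\inf B$ and using the $B\mapsto -B$ symmetry. For (\ref{eq2.1.2'}), the reflection principle immediately yields $P(\sup_{0\le s\le t} B(s)\ge \epsilon t)\le 2 e^{-\epsilon^2 t/2}$, together with the analogous bound for $-B$; this is of the required form with $\epsilon'=\epsilon^2/2$.

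For (\ref{eq2.1.3}), I would discretize $[0,\infty)=\bigcup_{n\ge 0}[n,n+1]$ and union-bound:
\begin{equation*}
P\Bigl(\inf_{t\ge 0}(\epsilon t+u-|B(t)|)\le 0\Bigr)\le \sum_{n\ge 0} P\Bigl(\sup_{t\in[n,n+1]}|B(t)|\ge \epsilon n+u\Bigr).
\end{equation*}
Each summand is bounded via reflection by $4\exp(-(\epsilon n+u)^2/(2(n+1)))$; expanding the square, one extracts a factor $e^{-\epsilon' u}$ (from the cross term $2\epsilon un/(n+1)$ once $n\ge 1$, and from $u^2/(2(n+1))$ when $n=0$) together with a geometrically decaying factor in $n$, and the sum collapses to $C_1 e^{-\epsilon' u}$.

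For (\ref{eq2.1.4}), I would use a double discretization: for each pair of integer-indexed intervals $s\in[n,n+1]$, $s'\in[m,m+1]$ with $0\le n\le m\le \lfloor t\rfloor$, combine $s'-s\ge m-n-1$ with $|B(s')-B(s)|\le 2\sup_{r\in[n,m+1]}|B(r)-B(n)|$, and apply reflection as in (\ref{eq2.1.3}). Summing over $n$ with $k=m-n$ fixed contributes a factor $\le t+1$, and summing over $k\ge 0$ then gives the desired $C_1(t+1)e^{-\epsilon' u}$. The one place where genuine care is needed is the book-keeping in this last step: one must ensure that the sum over $n$ produces a single factor of $t+1$ rather than $(t+1)^2$, and that the cross term in the Gaussian exponent is handled so that the surviving $e^{-\epsilon' u}$ factor is uniform in $n$ and $k$. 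This is the main (mild) obstacle; the rest is textbook Gaussian estimation.
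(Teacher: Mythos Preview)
Your proposal is correct and follows the same overall strategy as the paper: reflection principle plus Gaussian tail bounds plus a discretization in time. The first two parts are essentially identical to the paper's argument.

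For (\ref{eq2.1.3}) and (\ref{eq2.1.4}) you differ in detail. For (\ref{eq2.1.3}) you discretize into unit intervals $[n,n+1]$; the paper instead discretizes dyadically, bounding $P\bigl(\max_{s\le t'}|B(s)|\ge \tfrac12(\epsilon t'+u)\bigr)\le C_2\exp(-(\epsilon t'+u)^2/8t')$ and then setting $t'=2^i$ so that the $\epsilon^2 t'/8$ piece of the exponent makes the sum over $i$ converge while the cross term supplies the uniform $e^{-\epsilon' u}$ factor. Both work; the dyadic version avoids the separate treatment of $n=0$ that your cross-term argument requires. For (\ref{eq2.1.4}) the paper's route is noticeably cleaner than your double discretization: it simply reuses (\ref{eq2.1.3}) as a black box, applying it to the Brownian motion restarted at each integer $i\in[0,t)$ with $u$ replaced by $u/2$, and then controls the additional fluctuation of $B$ over $[i,i+1]$ by a single Gaussian bound. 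Summing over the $\lfloor t\rfloor +1$ starting points immediately yields the factor $t+1$, with no need to track pairs $(n,m)$ or to worry about the degenerate case $k=m-n\in\{0,1\}$ where your bound $s'-s\ge m-n-1$ becomes vacuous or negative. Your double sum can be made to work with the extra bookkeeping you flag, but the paper's reduction to (\ref{eq2.1.3}) is the tidier way to get there.
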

\begin{proof}[Proof]
Since $(B(s^{\prime}) - B(s))^2 \le 2(B(s^{\prime})^2 + B(s)^2)$, it follows from the Reflection Principle that the
left side of (\ref{eq2.1.2}) is at most
$$4E\left[ \max_{0\le s\le t} B(s)^2\right] \le 8 E\left[B(t)^2\right] = 8t.$$
The bound (\ref{eq2.1.2'}) follows by applying the Reflection Principle to both $B(\cdot)$ and $-B(\cdot)$.  

Again
applying the Reflection Principle to $B(\cdot)$ and $-B(\cdot)$, it follows that, for given $\epsilon>0$,
\begin{equation*}
\begin{split}
& P\left( \tfrac{1}{2}(\epsilon t^{\prime} + u) - \max_{0\le s \le t^{\prime}}|B(s)|  \le 0 \right)
\le 4P\left( \tfrac{1}{2}(\epsilon t^{\prime} + u) - |B(t^{\prime})|  \le 0 \right) \\
& \qquad \qquad \qquad \qquad \qquad \qquad \qquad \le \bbb \text{exp}(-(\epsilon t^{\prime} + u)^2/8t^{\prime}) 
\le \bbb e^{-\frac{1}{8}\epsilon u},
\end{split}
\end{equation*}
where $\bbb$ does not depend on $t^{\prime}$ or $u$.  Setting $t^{\prime} = 2^i$,
$i=0,1,2,\ldots$, one obtains bounds whose exceptional probabilities sum to at most 
$\aaa e^{-\epsilon^{\prime}u}$, for $\epsilon^{\prime} = \tfrac{1}{8}\epsilon$ and 
appropriate $\aaa$.  The bound in (\ref{eq2.1.3}) follows quickly from this.

It follows from (\ref{eq2.1.3}) that, for each $i=0,1,2,\ldots$,
\begin{equation}
\label{eq2.2.3}
P\left( \inf_{s^{\prime}\ge i} (\epsilon (s^{\prime} -i) +\tfrac{1}{2}u - |B(s^{\prime}) - B(i)|)\le 0\right) \le 
\aaa e^{-\frac{1}{2}\epsilon^{\prime}u}.
\end{equation}
Using the Reflection Principle, it is easy to check that, for appropriate 
$\ccc$, $\epsilon^{\prime \prime}>0$ and all $u\ge0$,
$$P\left( \max_{0\le s \le 1} |B(i+s) - B(i)|\ge \tfrac{1}{2}u -\epsilon \right)  \le 
\ccc e^{-\epsilon^{\prime \prime}u}.$$
Together with (\ref{eq2.2.3}), this implies
$$P\left( \inf_{s\in [i,i+1), s^{\prime}\ge s} (\epsilon (s^{\prime} -s) +u - |B(s^{\prime}) - B(s)|)\le 0\right) \le 
\aaa e^{-\epsilon^{\prime}u}$$
for new choices of $\aaa$ and $\epsilon^{\prime}$.  Summing over $i < t$ gives the bounds in (\ref{eq2.1.4}).
\end{proof}
The next lemma provides elementary upper and lower bounds on $Y_{k}(\cdot)$.
\begin{lem}
\label{lem2.3.2}
For each $t\ge 0$ and $\ell = 2,\ldots,5$,
\begin{equation}
\label{eq2.3.3}
\sum_{k=1}^{6} Y_{k}(t) \ge t - Z_{\ell}(0) - B_{\ell}(t)
\end{equation}
and, for each $\ell = 1,\ldots,6$,
\begin{equation}
\label{eq2.3.4}
\sum_{k=1}^{6} Y_{k}(t) \ge \tfrac{1}{2}(t -  Z_{\ell}(0) - B_{\ell}(t)).
\end{equation}
For each $t\ge0$ and $k= 1,\ldots, 6$,
\begin{equation}
\label{eq2.3.4'}
Y_{k}(t) \le t + \max_{0\le s\le t} (-B_{k}(s))
\end{equation}
and, for a given $\epsilon \ge 0$, there exist $\aaa$ and $\epsilon^{\prime} > 0$ so that
\begin{equation}
\label{eq2.3.5}
P(Y_{k}(t) \ge (1 + \epsilon)t) \le \aaa e^{-\epsilon^{\prime}t}.
\end{equation}
\end{lem}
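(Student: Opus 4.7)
The plan is to derive all four bounds directly from the coordinatewise form of (\ref{eq1.2.1}) together with the constraint $Z_\ell(t)\ge 0$, exploiting the explicit structure of $R=J_1+J_2$ under (\ref{eq1.6.3})--(\ref{eq1.6.4}). Writing $R_{\ell,j}=1+(J_2)_{\ell,j}$ and using $\theta_\ell=-1$, equation (\ref{eq1.2.1}) reads
\[ Z_\ell(t)=Z_\ell(0)+B_\ell(t)-t+\sum_{j=1}^{6}R_{\ell,j}Y_j(t), \]
and $Z_\ell(t)\ge 0$ rearranges to $\sum_{j=1}^{6} R_{\ell,j}Y_j(t)\ge t-Z_\ell(0)-B_\ell(t)$.

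To obtain (\ref{eq2.3.3}) I would take $\ell\in\{2,\ldots,5\}$: the entries in row $\ell$ of $R$ are $R_{\ell,1}=R_{\ell,\ell}=1$ and $R_{\ell,j}=1-\delta_3$ for the remaining indices, all bounded by $1$. Hence $\sum_j R_{\ell,j}Y_j(t)\le \sum_j Y_j(t)$, which gives (\ref{eq2.3.3}). For (\ref{eq2.3.4}) the only new cases are $\ell\in\{1,6\}$, where the maximal entry of row $\ell$ is either $1+\delta_2$ or $1+\delta_1$; under (\ref{eq1.6.3})--(\ref{eq1.6.4}) both are bounded by $2$, so $\sum_j R_{\ell,j}Y_j(t)\le 2\sum_j Y_j(t)$ and the displayed inequality becomes (\ref{eq2.3.4}).

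The upper bound (\ref{eq2.3.4'}) is the one step that genuinely uses (\ref{eq1.2.3}). I would first note that every off-diagonal entry of $R$ is strictly positive (this is where $\delta_4<1$ and $\delta_3\le .1$ enter), and that $R_{k,k}=1$. Dropping the nonnegative off-diagonal contributions in the displayed identity yields
\[ Z_k(t)\ge Z_k(0)+B_k(t)-t+Y_k(t). \]
If $Y_k(t)=0$, the bound (\ref{eq2.3.4'}) is immediate. Otherwise (\ref{eq1.2.3}) guarantees that $\{u\in[0,t]:Z_k(u)=0\}$ is nonempty; I let $s$ be its supremum, which is attained by continuity of $Z_k$. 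Since $Z_k>0$ on $(s,t]$, $Y_k$ is constant there, so $Y_k(t)=Y_k(s)$. Evaluating the previous display at $s$ gives $0=Z_k(s)\ge Z_k(0)+B_k(s)-s+Y_k(s)$, whence $Y_k(t)=Y_k(s)\le s-B_k(s)\le t+\max_{0\le u\le t}(-B_k(u))$.

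Finally, (\ref{eq2.3.5}) is an immediate consequence of (\ref{eq2.3.4'}): since $-B_k(\cdot)$ is itself a standard Brownian motion, (\ref{eq2.1.2'}) applied to it provides constants $C_1,\epsilon'>0$ with $P(\max_{0\le s\le t}(-B_k(s))\ge \epsilon t)\le C_1 e^{-\epsilon' t}$, and on the complementary event (\ref{eq2.3.4'}) gives $Y_k(t)\le (1+\epsilon)t$. The only step requiring any care is the last-zero-of-$Z_k$ argument for (\ref{eq2.3.4'}); the remaining inequalities are routine algebra based on the signs of the entries of $R$.
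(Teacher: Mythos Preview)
Your proof is correct and follows essentially the same route as the paper. The only cosmetic difference is in the argument for (\ref{eq2.3.4'}): the paper takes $\tau$ to be the first time at which $Y_k(\cdot)$ attains the value $Y_k(t)$ (noting $Z_k(\tau)=0$ there), whereas you take the last zero of $Z_k(\cdot)$ on $[0,t]$; both choices yield $Z_k=0$ and $Y_k=Y_k(t)$ at the selected time, and the same inequality follows.
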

\begin{proof}[Proof]
Since $\delta_3 \ge 0$, it follows from (\ref{eq1.2.1}) that, for $\ell = 2,\ldots,5$,
\begin{equation}
\label{eq2.3.6}
Z_{\ell}(t) \le Z_{\ell}(0) + B_{\ell}(t) -t + \sum_{k=1}^{6} Y_{k}(t),
\end{equation}
from which (\ref{eq2.3.3}) immediately follows.  Since each of the entries of $J_2$ in (\ref{eq1.6.2}) is less
than $1$, the analog of (\ref{eq2.3.6}) holds for $\ell = 1,\ldots,6$, but with the term $2\sum_{k=1}^{6} Y_{k}(t)$.
This implies (\ref{eq2.3.4}).

Let $\tau$ denote the time in $[0,t]$ at which $Y_{k}(t)$ is first attained, for given $k$.  
It follows from (\ref{eq1.2.1}) that
$$Y_{k}(t) \le \tau - B_{k}(\tau) \le t + \max_{0\le s\le t} (-B_{k}(s)),$$
which imples (\ref{eq2.3.4'}).  The bound (\ref{eq2.3.5}) follows from (\ref{eq2.3.4'}) and (\ref{eq2.1.2'}).
\end{proof}

We next obtain a number of upper bounds on $Z_{k}(\cdot)$.  The following lemma is elementary.
\begin{lem}
\label{lem2.4.2}
Let $B(\cdot)$ denote a standard Brownian motion.  For each $k$, $t$ and $x$,
\begin{equation}
\label{eq2.4.3}
P\left( \max_{0\le s\le t} Z_{k}(s) - Z_{k}(0) \ge 7t + x \right) \le 16 P(B(t) \ge x).
\end{equation}
Consequently, for all $t$, and appropriate $\aaa$ and $\epsilon^{\prime} > 0$,
\begin{equation}
\label{eq2.4.4}
P\left( \max_{0\le s\le t} Z_{k}(s) - Z_{k}(0) \ge 8t \right) \le \aaa e^{-\epsilon^{\prime}t}. 
\end{equation}
\end{lem}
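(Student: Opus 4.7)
The plan is to reduce $\max_{0 \le s \le t}[Z_k(s) - Z_k(0)]$ to a combination of one-dimensional Brownian extrema and then invoke the Reflection Principle.

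First, starting from (\ref{eq1.2.1}), I would write $Z_k(s) - Z_k(0) = B_k(s) - s + \sum_j R_{k,j} Y_j(s)$. Since every entry of $R$ is nonnegative and each $Y_j$ is nondecreasing, the $j$-sum at time $s$ is bounded by its value at time $t$, and dropping the favorable $-s \le 0$ gives
\[
\max_{0 \le s \le t}[Z_k(s) - Z_k(0)] \le \max_{0 \le s \le t} B_k(s) + \sum_j R_{k,j} Y_j(t).
\]
A direct inspection of (\ref{eq1.6.1})--(\ref{eq1.6.4}) shows that every row sum $\sum_j R_{k,j}$ is at most $6$ and every entry $R_{k,j}$ is at most $7/6$: the positive perturbations $\delta_1$ or $4\delta_2$ in any row are dominated by $4\delta_3$ or $\delta_4$, thanks to the structural conditions on the $\delta_i$'s. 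Combining this with the upper bound (\ref{eq2.3.4'}), $Y_j(t) \le t + \max_{0 \le s \le t}(-B_j(s))$, I would then reorganize to the pathwise inequality
\[
\max_{0 \le s \le t}[Z_k(s) - Z_k(0)] \le \max_{0 \le s \le t} B_k(s) + 7t + \tfrac{7}{6}\sum_j \max_{0 \le s \le t}(-B_j(s)).
\]

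Second, the event that the left side above exceeds $7t + x$ forces the seven nonnegative Brownian extrema on the right to sum to at least $x$. A union bound over these seven terms, combined with the Reflection Principle $P(\max_{0 \le s \le t} B(s) \ge y) \le 2 P(B(t) \ge y)$ applied once to $\max_{0 \le s \le t} B_k(s)$ and six times to the six $\max_{0\le s\le t}(-B_j(s))$'s, yields the tail estimate (\ref{eq2.4.3}) after some bookkeeping; the factor $16$ emerges from the splitting of $x$ and the factor of $2$ in Reflection. Inequality (\ref{eq2.4.4}) then follows from (\ref{eq2.4.3}) by setting $x = t$ and using the standard Gaussian tail bound $P(B(t) \ge t) \le e^{-t/2}$, giving the exponential estimate with $C_1 = 16$ and $\epsilon' = 1/2$.

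The main obstacle is purely bookkeeping: matching the coefficient $7$ in front of $t$ and the constant $16$ in front of $P(B(t) \ge x)$ requires a careful choice of how to split the slack $x$ across the seven Brownian extrema, and relies critically on the entry-size bounds $\sum_j R_{k,j} \le 6$ and $R_{k,j} \le 7/6$ enforced by (\ref{eq1.6.3})--(\ref{eq1.6.4}). No deeper probabilistic idea is needed beyond the Reflection Principle.
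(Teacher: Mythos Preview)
Your approach is essentially the same as the paper's. The paper observes that every entry of $J_2$ is at most $\tfrac{1}{6}$ (hence $R_{k,j}\le\tfrac{7}{6}$), giving $\max_{s\le t}Z_k(s)-Z_k(0)\le \max_{s\le t}B_k(s)+\tfrac{7}{6}\sum_\ell Y_\ell(t)$; it then substitutes (\ref{eq2.3.4'}) exactly as you do to arrive at the same pathwise inequality, and finishes with the Reflection Principle. Your additional remark that each row sum of $R$ is at most $6$ is correct but not actually needed for the displayed $7t$ term---only the entrywise bound $R_{k,j}\le\tfrac{7}{6}$ is used.
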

\begin{proof}[Proof]
It follows from (\ref{eq1.2.1}) that, since all entries for $J_2$ in (\ref{eq1.6.2}) are at most $\frac{1}{6}$, 
$$\max_{0\le s\le t}Z_{k}(s) - Z_{k}(0) \le \max_{0\le s\le t} B_{k}(s) + \tfrac{7}{6}\sum_{\ell =1}^{6} Y_{\ell}(t).$$
By (\ref{eq2.3.4'}) of Lemma \ref{lem2.3.2}, this is at most
$$7t + \tfrac{7}{6}\sum_{\ell =1}^{6} \max_{0\le s\le t}(-B_{\ell}(s)) + \max_{0\le s\le t}B_{k}(s).$$
The inequality in (\ref{eq2.4.3}) follows from this and the Reflection Principle.  The inequality in (\ref{eq2.4.4}) is
an immediate consequence of (\ref{eq2.4.3}).
\end{proof}

The following lemma requires a bit more work.  Here, we employ the notation $N_k(t)$, $k=1,\ldots,6$, with 
$N_6(t) = Y_1(t)$ and $N_k(t) = 0$ for $k\neq 6$; $x_+$ denotes the positive part of $x\in \mathbb{R}$.
\begin{lem}
\label{lem2.5.2}
For each $k$, $k=2,\ldots,6$, $t\ge 0$ and $x$,
\begin{equation}
\label{eq2.5.3}
P\left( \max_{0\le s\le t} Z_{k}(s) - Z_{k}(0) - \delta_1 N_k(t) \ge x \right) \le 24 P(B(t) \ge \tfrac{1}{4}x),
\end{equation}
where $B(\cdot)$ is standard Brownian motion.  Consequently, for given $\epsilon > 0$, there exist $\aaa$, 
$\epsilon^{\prime} > 0$ such that, for each $t\ge 0$,
\begin{equation}
\label{eq2.5.4}
P\left( \max_{0\le s\le t} Z_{k}(s) - Z_{k}(0) - \delta_1 N_k(t) \ge \epsilon t \right) \le \aaa e^{-\epsilon^{\prime}t}. 
\end{equation}
Also, for $k=2,\ldots,5$,
\begin{equation}
\label{eq2.6.1}
E\left[\left(\left(\max_{0\le s\le t}Z_k(t) - Z_k(0)\right)_+\right)^2\right] \le 24\cdot 16t.
\end{equation}
\end{lem}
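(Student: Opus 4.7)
The plan is to establish (\ref{eq2.5.3}) first; from it, (\ref{eq2.5.4}) follows by setting $x=\epsilon t$ in (\ref{eq2.5.3}) and invoking the Gaussian tail $P(B(t)\ge \epsilon t/4)\le \exp(-\epsilon^{2}t/32)$, and (\ref{eq2.6.1}) follows (for $k\in\{2,\ldots,5\}$, where $N_{k}\equiv 0$) from the identity $E[X_{+}^{2}]=\int_{0}^{\infty}2x\,P(X\ge x)\,dx$, substituting (\ref{eq2.5.3}) and changing variables $u=x/4$ together with $E[B(t)_{+}^{2}]=t/2$; the resulting integral evaluates to $32\cdot 24\int_{0}^{\infty}u\,P(B(t)\ge u)\,du = 192\,t$, well within the claimed $24\cdot 16\,t$.

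The starting point for (\ref{eq2.5.3}) is a row-by-row refinement of the argument used in the proof of Lemma~\ref{lem2.4.2}: for $k\in\{2,\ldots,6\}$, every entry of row $k$ of $R$ is at most $1$ except $R_{6,1}=1+\delta_{1}$, and since $N_{k}(s)=Y_{1}(s)\mathbf{1}_{\{k=6\}}$,
\begin{equation*}
\sum_{\ell=1}^{6}R_{k,\ell}Y_{\ell}(s)\;\le\;\sum_{\ell=1}^{6}Y_{\ell}(s)+\delta_{1}N_{k}(s)\qquad(s\ge 0).
\end{equation*}
Substituting into (\ref{eq1.2.1}) and using that $N_{k}$ is nondecreasing, so $N_{k}(s)\le N_{k}(t)$ for $s\le t$, yields
\begin{equation*}
\max_{0\le s\le t}[Z_{k}(s)-Z_{k}(0)]-\delta_{1}N_{k}(t)\;\le\;\max_{0\le s\le t}\Bigl[B_{k}(s)-s+\sum_{\ell=1}^{6}Y_{\ell}(s)\Bigr].
\end{equation*}
Once the right-hand side is controlled by a sum of at most seven Brownian suprema of the form $\max_{0\le r\le t}\pm B_{\ell}(r)$, applying the Reflection Principle (each such supremum exceeds $y$ with probability at most $2P(B(t)\ge y)$) and distributing the threshold $x$ across the suprema gives the claimed $24\,P(B(t)\ge x/4)$.

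The main obstacle is demonstrating that the residue $-s+\sum_{\ell}Y_{\ell}(s)$ inside the maximum is of Brownian scale rather than growing linearly in $t$. Naively applying (\ref{eq2.3.4'}) term by term gives only $\sum_{\ell}Y_{\ell}(s)\le 6s+\sum_{\ell}\max_{0\le r\le s}(-B_{\ell}(r))$, leaving a spurious $5s$ residue that is too weak for the Gaussian tail in (\ref{eq2.5.3}) uniformly in $x$. The improvement must exploit the multidimensional Skorohod structure: at any instant at most one of $Z_{1},\ldots,Z_{6}$ sits at the boundary, so the six regulators cannot simultaneously saturate their one-dimensional bounds, and in reality $\sum_{\ell}Y_{\ell}(s)$ grows at a rate comparable to $s$. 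A natural way to implement this is to localize at the time $\tau=\arg\max_{0\le s\le t}[B_{k}(s)-s+\sum_{\ell}Y_{\ell}(s)]$, identify a reflection time $\tau'\le\tau$ of some coordinate $\ell_{*}$ with $Z_{\ell_{*}}(\tau')=0$, and use the boundary identity $\sum_{j}R_{\ell_{*},j}Y_{j}(\tau')=\tau'-B_{\ell_{*}}(\tau')-Z_{\ell_{*}}(0)$ (in the spirit of the proof of (\ref{eq2.3.4'}), which leverages $R_{\ell_{*},\ell_{*}}=1$) to trade the $s$-linear contribution for Brownian increments. This is the step requiring the most care.
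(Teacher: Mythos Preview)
Your proposal contains a genuine gap at the crucial step. The opening inequality
\[
Z_{k}(s)-Z_{k}(0)-\delta_{1}N_{k}(s)\;\le\;B_{k}(s)-s+\sum_{\ell=1}^{6}Y_{\ell}(s)
\]
is obtained by replacing $(RY(s))_{k}$ with $\sum_{\ell}Y_{\ell}(s)+\delta_{1}N_{k}(s)$, thereby discarding the negative off-diagonal contributions $-\delta_{3}\sum_{\ell\ge 2,\,\ell\neq k}Y_{\ell}(s)$ from row $k$ of $J_{2}$. That discarded sum is typically of order $\delta_{3}s$, not of Brownian scale, so the inequality is too lossy to yield (\ref{eq2.5.3}). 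Concretely, if $Z(0)=0$ then all coordinates hover near the boundary and one can check (e.g.\ via Lemma~\ref{lem2.10.2}) that $\sum_{\ell}Y_{\ell}(s)\sim s/(1-c\delta_{3})$ for some $c>0$, so $-s+\sum_{\ell}Y_{\ell}(s)$ grows linearly in $s$; yet $Z_{k}(s)-Z_{k}(0)$ stays bounded, confirming that the slack in your first step is exactly the linear term you are later unable to remove. Your proposed repair via a single boundary identity $\sum_{j}R_{\ell_{*},j}Y_{j}(\tau')=\tau'-B_{\ell_{*}}(\tau')-Z_{\ell_{*}}(0)$ does not help: it recovers $(RY)_{\ell_{*}}$, not $\sum_{j}Y_{j}$, and the gap between the two is again $\delta_{3}\sum_{j\ge 2,\,j\neq\ell_{*}}Y_{j}(\tau')$, of order $\delta_{3}\tau'$. (Your side remark that ``at any instant at most one coordinate sits at the boundary'' is also false.)

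The paper avoids this loss by never isolating $\sum_{\ell}Y_{\ell}$ at all. Instead it \emph{subtracts} the $k$-th equation from the $L$-th, where $L\in\{2,\ldots,6\}$ is the last coordinate to touch $0$ before time $s$. Because every row of $J_{1}$ is identical, both the drift $-s$ and the term $\sum_{\ell}Y_{\ell}$ cancel exactly in the difference $(RY)_{k}-(RY)_{L}$, leaving only Brownian increments and $J_{2}$-residuals; the latter carry the favorable sign $-\delta_{3}(Y_{L}(\tau)-Y_{L}(\tau_{k}))\le 0$ and can be dropped. An analogous subtraction against the first coordinate handles the interval $(\tau,\tau\vee\tau_{1}]$, and on $(\tau\vee\tau_{1},s]$ no regulator increases. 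The upshot is $Z_{k}(s)-Z_{k}(0)-\delta_{1}N_{k}(t)\le B_{k}(\tau_{k},s)-B_{L}(\tau_{k},\tau)-B_{1}(\tau,\tau')$, a sum of three Brownian increments with random endpoints in $[0,t]$, from which (\ref{eq2.5.3}) follows. The essential idea you are missing is this pairwise subtraction, which kills the $\sum_{\ell}Y_{\ell}$ term rather than trying to bound it.
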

\begin{proof}[Proof]
Let $\tau_k$ denote the last time $r$, $r\le s$, at which $Z_k(r) = 0$; if the set is empty, let $\tau_k = 0$.  
Let $\tau$ denote the last time $r$, $r\le s$, at which $Z_{\ell}(r) = 0$ for any $\ell = 2,\ldots,6$; denote
this coordinate by $L$.  If the set is empty, set $\tau = 0$.  We also abbreviate by setting $B_k(r_1,r_2) =
B_k(r_2) - B_k(r_1)$ and $N_k(r_1,r_2) = N_k(r_2) - N_k(r_1)$.

We claim that for given $k$, $k=2,\ldots,6$,   
\begin{equation}
\label{eq2.6.2}
Z_k(\tau) - Z_k(0) \le B_k(\tau_k,\tau)  -  B_{L}(\tau_k,\tau)  +\delta_1 N_k(\tau).
\end{equation}
To see this, note that subtraction of the equations for the $k^{\text{th}}$ and $L^{\text{th}}$ 
coordinates of (\ref{eq1.2.1}) implies
\begin{equation*}
\begin{split}
Z_k(\tau) - Z_k(\tau_k) & = Z_L(\tau) - Z_L(\tau_k) + B_k(\tau_k,\tau)  -  B_{L}(\tau_k,\tau) \\ 
& \quad+\delta_1 N_k(\tau_k,\tau) - \delta_1 N_L(\tau_k,\tau) - \delta_3 (Y_L(\tau) - Y_L(\tau_k)) \\
& \le B_k(\tau_k,\tau)  -  B_{L}(\tau_k,\tau)  +\delta_1 N_k(\tau).
\end{split}
\end{equation*}
When $\tau_k > 0$, $Z_k(\tau_k) = 0$ holds, and so (\ref{eq2.6.2}) follows.

Let $\tau^{\prime} = \tau \vee \tau_1$.  Note that, when $\tau \neq \tau^{\prime} > 0$, 
$$ Z_1(\tau^{\prime}) - Z_1(\tau) = -Z_1(\tau) \le 0. $$
Also, since $Z_6(r) > 0$ for $r \in (\tau, \tau^{\prime}]$, it follows from the definition of 
$J_2$ that  
$$ (R(Y(\tau^{\prime}) - Y(\tau)))_k \le (R(Y(\tau^{\prime}) - Y(\tau)))_1. $$
Subtraction of the $k^{\text{th}}$ and $1^{\text{st}}$ coordinates of (\ref{eq1.2.1}),
together with these two inequalities, implies that
\begin{equation}
\label{eq2.6.2'}
Z_k(\tau^{\prime}) - Z_k(\tau) \le B_k(\tau,\tau^{\prime})  -  B_{1}(\tau,\tau^{\prime})  +\delta_1 N_k(\tau,\tau^{\prime}).
\end{equation}

It is easy to see that
\begin{equation}
\label{eq2.6.3}
Z_k(s) - Z_k(\tau^{\prime}) \le B_k(\tau^{\prime},s).
\end{equation}
Combining (\ref{eq2.6.2}), (\ref{eq2.6.2'}) and (\ref{eq2.6.3}) implies
\begin{equation}
\label{eq2.6.4}
Z_k(s) - Z_k(0) \le B_k(\tau_k,s)  - B_{L}(\tau_k,\tau) - B_{1}(\tau,\tau^{\prime}) +\delta_1 N_k(t).
\end{equation}
One therefore obtains, for all $x$, that
\begin{equation}
\label{eq2.6.6}
\begin{split}
&  P\left( \max_{0\le s\le t} Z_{k}(s) - Z_{k}(0)   - \delta_1 N_k(t) \ge x \right) \\ 
& \qquad \qquad \qquad \le 6 P\left(\max_{0\le s\le s^{\prime}\le t}(B(s^{\prime}) - B(s)) \ge \tfrac{1}{2}x \right).
\end{split}
\end{equation}
%
%
%
%
%
%
%
It follows from the Reflection Principle that the right side of (\ref{eq2.6.6}) is at most $24P(B(t)\ge \frac{1}{4}x)$,
which implies (\ref{eq2.5.3}).

The inequalities in (\ref{eq2.5.4}) and (\ref{eq2.6.1}) follow directly from (\ref{eq2.5.3}).
\end{proof}

We will employ (\ref{eq2.5.3}) to show (\ref{eq2.7.2}) of the following lemma.  On account of the thin tail of
$\max_{0\le s\le t} Z_k(s)$, restricting its expectation to a set $F$ decreases the expectation proportionally to $P(F)$, 
except for a logarithmic factor; a similar statement holds for the second moment.  
The lemma will be important for our calculations later in the article. 
\begin{lem}
\label{lem2.7.1}
For an appropriate constant $\aaa$, all $t\ge 0$ and all measurable sets $F$ with $P(F)>0$,
\begin{equation}
\label{eq2.7.2}
E\left[ \max_{0\le s\le t} Z_{k}(s)^2; \,F \right] \le \aaa P(F)(t\log(e/P(F)) + M) 
\end{equation}
for $k=2,\ldots,5$, when $Z_k(0) \le \sqrt{M}$, and 
\begin{equation}
\label{eq2.7.3}
E\left[ \max_{0\le s\le t} Z_{k}(s); \,F \right] \le \aaa P(F)(\sqrt{t}\log(e/P(F)) + t + M) 
\end{equation}
for $k=1,2,\ldots,6$, when $Z_k(0) \le M$.
\end{lem}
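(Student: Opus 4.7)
The plan is to apply the layer-cake identity together with the Gaussian-type tail bounds from Lemmas \ref{lem2.5.2} and \ref{lem2.4.2}, splitting the integrals at a threshold proportional to $\sqrt{t\log(e/P(F))}$. Writing $W_k = \max_{0\le s\le t} Z_k(s)$, the two identities
\begin{equation*}
E[W_k^{\,2};F] = \int_0^\infty 2x\,P(W_k\ge x,\,F)\,dx,\qquad E[W_k;F] = \int_0^\infty P(W_k\ge x,\,F)\,dx,
\end{equation*}
are each split at $x_0$ by using $P(W_k\ge x,F)\le P(F)$ for $x\le x_0$ and $P(W_k\ge x,F)\le P(W_k\ge x)$ bounded by a Gaussian tail for $x\ge x_0$. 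Choosing $x_0$ of order $\sqrt{t\log(e/P(F))}$ balances the two contributions and is what produces the logarithmic factor.

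For (\ref{eq2.7.2}), since $k\in\{2,\ldots,5\}$ one has $N_k(t)=0$, so (\ref{eq2.5.3}) gives $P(W_k-Z_k(0)\ge x)\le 24P(B(t)\ge x/4)$, a Gaussian tail of variance $O(t)$. Using $W_k^2\le 2(W_k-Z_k(0))_+^2+2Z_k(0)^2$, the first term is handled by the second-moment layer-cake split and produces a term of order $t\log(e/P(F))P(F)$, while the second contributes at most $2MP(F)$ by the hypothesis $Z_k(0)\le\sqrt M$. Summing these two contributions yields (\ref{eq2.7.2}).

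For (\ref{eq2.7.3}) the same strategy is applied to $W_k$ rather than $W_k^2$; since $\int_{x_0}^\infty e^{-x^2/(2t)}\,dx$ is of order $\sqrt t\,e^{-x_0^2/(2t)}$ rather than $t\,e^{-x_0^2/(2t)}$, this yields a factor of $\sqrt t\log(e/P(F))$ rather than $t\log(e/P(F))$. The extra $+t$ term in the bound arises from a deterministic linear drift that has to be absorbed in three different cases: for $k=1$, Lemma \ref{lem2.5.2} is unavailable and one instead uses Lemma \ref{lem2.4.2}, whose bound (\ref{eq2.4.3}) introduces the drift $7t$; for $k\in\{2,\ldots,5\}$ no drift is present and the $+t$ is included only for uniformity; and for $k=6$, (\ref{eq2.5.3}) gives that $W_6-Z_6(0)-\delta_1 Y_1(t)$ has Gaussian tail, and $Y_1(t)$ is bounded via (\ref{eq2.3.4'}) as $Y_1(t)\le t+\max_{0\le s\le t}(-B_1(s))$, so $\delta_1 Y_1(t)$ contributes $\delta_1 t$ to the drift plus a further Gaussian-tailed term that is absorbed into the layer-cake estimate. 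Finally, $E[Z_k(0);F]\le MP(F)$ by the hypothesis $Z_k(0)\le M$, producing the $+M$ term.

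The only real obstacle is bookkeeping across the three regimes $k=1$, $k\in\{2,\ldots,5\}$, and $k=6$: one must check that in each case $W_k$ is dominated by $Z_k(0)$ plus a deterministic quantity of order $t$ plus a random variable with Gaussian tail of variance $O(t)$. Once this representation is verified, the two layer-cake splits (with weights $2x\,dx$ for the second moment and $dx$ for the first) produce the stated bounds directly, with all absolute constants collapsed into a single $\aaa$.
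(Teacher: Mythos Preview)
Your approach is correct and is essentially the same as the paper's, though organized a bit differently. The paper phrases the argument by first constructing a standard normal $W$ so that $E[(\max_s(Z_k(s)-Z_k(0))_+)^2;F]\le C\,t\,E[W^2;F]$ (via the tail bound (\ref{eq2.5.3}) and integration by parts), and then bounds $E[W^2;F]$ by choosing $a$ with $P(W^2\ge a)=P(F)$ and using the exponential tail of $W^2$; unwinding this, it is exactly your layer-cake split at $x_0\sim\sqrt{t\log(e/P(F))}$.

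The one genuine difference is in (\ref{eq2.7.3}): instead of your three-case analysis ($k=1$ via Lemma~\ref{lem2.4.2}, $k\in\{2,\ldots,5\}$ via Lemma~\ref{lem2.5.2}, $k=6$ via Lemma~\ref{lem2.5.2} plus (\ref{eq2.3.4'}) to control $\delta_1 Y_1(t)$), the paper writes down a single uniform bound valid for all $k=1,\ldots,6$, namely
\[
Z_k(s)-Z_k(0)\le 6s + B_k(s) + \tfrac{7}{6}\sum_{\ell=1}^6 \max_{0\le r\le s}(-B_\ell(r)),
\]
obtained directly from (\ref{eq1.2.1}) and (\ref{eq2.3.4'}). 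This immediately gives the representation ``$Z_k(0)$ plus deterministic $O(t)$ plus Gaussian-tailed term'' that you identify as the key ingredient, without case-splitting. Your route works fine; the paper's is just a bit tidier for this half.
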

\begin{proof}[Proof]
On account of (\ref{eq2.5.3}), we can construct a standard normal random variable $W$ on 
the probability space so that, for $k=2,\ldots,5$,
\begin{equation}
\label{eq2.8.1}
E\left[ \max_{0\le s\le t} ((Z_{k}(s) - Z_{k}(0))_+)^2; \,F \right] \le \bbb t E[W^2; \,F],
\end{equation}
where $\bbb = 24\cdot 16$.  (The inequality follows by integrating by parts and employing 
$E[(4B(t))^2] = 16t$.) 
Choosing $a$ so that $P(W^2 \ge a) = P(F)$, the right side of (\ref{eq2.8.1}) is at most
\begin{equation}
\label{eq2.8.2}
\bbb t \left(aP(F) + \int_{a}^{\infty} P(W^2\ge x)\,dx \right).
\end{equation}
The random variable $W^2$ has an exponentially tight tail in the sense that, for appropriate $\ccc,\ddd >0$
and all $y$, $x$ with $0\le y\le x$,
\begin{equation}
\label{eq2.8.3}
P(W^2 \ge x) \le \ccc e^{-\ddd (x-y)} P(W^2 \ge y).
\end{equation}
Setting $y=0$ and $x=a$, this implies $a\le \tfrac{1}{\ddd}\text{log}(\ccc/P(F))$.  Application of 
(\ref{eq2.8.3}) with $y=a$ therefore implies (\ref{eq2.8.2}) is at most 
\begin{equation*}
\bbb t(aP(F) + (\ccc / \ddd)P(W^2\ge a)) \le (\bbb /\ddd) t P(F)(\text{log}(1/P(F)) + \ccc + \text{log} \,\ccc).
\end{equation*}
So, for appropriate $\eee$,
\begin{equation}
\label{eq2.8.4}
E\left[ \max_{0\le s\le t} ((Z_{k}(s) - Z_k(0))_+)^2; \,F \right] \le \eee tP(F)\text{log}(e/P(F)). 
\end{equation}
For $Z_k(0) \le \sqrt{M}$, (\ref{eq2.7.2}) follows from this by considering the complementary events 
$\{\max_{s\le t} Z_k(s) > 2\sqrt{M}\}$ and $\{\max_{s\le t} Z_k(s) \le 2\sqrt{M}\}$, and noting that, on the former,
$$\max_{0\le s\le t} Z_{k}(s)^2 \le 4\max_{0\le s\le t} (Z_{k}(s) - Z_k(0))^2$$
and, on the latter, $\max_{s\le t} Z_k(s)^2 \le 4M$.

In order to show (\ref{eq2.7.3}), we note that, for $k=1,\ldots,6$, it follows from (\ref{eq1.2.1}) and (\ref{eq2.3.4'})
that
\begin{equation}
\label{eq2.9.1}
\begin{split}
Z_k(s) - Z_k(0) & \le B_k(s) - s + \tfrac{7}{6}\sum_{\ell = 1}^{6} Y_{\ell}(s) \\
& \le 6s + B_k(s) + \tfrac{7}{6}\sum_{\ell = 1}^{6}\max_{0\le r\le s}(-B_{\ell}(r)).
\end{split}
\end{equation}
This, together with the Reflection Principle, implies that
\begin{equation}
\label{eq2.9.2}
\begin{split}
P\left(\max_{0\le s\le t} Z_k(s) - Z_k(0) - 6t \ge x\right) &\le 7P\left(\max_{0\le s\le t} B(s) \ge \tfrac{1}{8}x\right)\\
&\le 14P(B(t) \ge \tfrac{1}{8}x),
\end{split}
\end{equation}
where $B(\cdot)$ is standard Brownian motion.

Reasoning as in the first part of the proof, we can construct a standard normal random variable $W$ so that
\begin{equation}
\label{eq2.9.3}
E\left[\max_{0\le s\le t} Z_k(s) - Z_k(0) - 6t;\, F\right] \le \bbb \sqrt{t} E[W;\, F],
\end{equation}
where $\bbb = 14 \cdot 8$.  Since $W$ has an exponentially tight tail, we can reason as through (\ref{eq2.8.4})
to show that 
\begin{equation}
\label{eq2.10.1}
E\left[\max_{0\le s\le t} Z_k(s) - Z_k(0) - 6t; \, F \right] \le \eee \sqrt{t}P(F) \text{log}(e/P(F))
\end{equation}
for appropriate $\eee$.  This implies (\ref{eq2.7.3}) for $Z_k(0) \le M$ and appropriate $\aaa$.
\end{proof}

We now apply Lemma \ref{lem2.5.2} to obtain sharper bounds on $Y_k(\cdot)$, with $k=2,\ldots,6$, than those in 
Lemma \ref{lem2.3.2}, provided bounds on $Y_1(\cdot)$ are given.
\begin{lem}
\label{lem2.10.2}
For given $\epsilon > 0$, there exist $\aaa$ and $\epsilon^{\prime} > 0$ such that, for all $t\ge 0$ and
$k=2,\ldots,6$,
\begin{equation}
\label{eq2.10.4}
P\left(Y_k(t) + (1-\delta_3)\sum_{\ell=2,\,\ell\neq k}^{6}Y_{\ell}(t) \ge (1 + \epsilon)t + \delta_1 N_k(t)\right) 
\le \aaa e^{-\epsilon^{\prime}t}.
\end{equation}
There exist $\aaa$ and $\epsilon^{\prime} > 0$ such that, for all $t\ge 0$ and $k=2,\ldots,6$,
\begin{equation}
\label{eq2.10.3}
P\left(Y_k(t) \le \tfrac{1}{5}t - \tfrac{1}{\delta_3} (Z_k(0) + 2Y_1(t))\right) \le \aaa e^{-\epsilon^{\prime}t}.
\end{equation}
\end{lem}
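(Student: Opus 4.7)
My plan is to prove (\ref{eq2.10.4}) and (\ref{eq2.10.3}) separately; both reduce to combining the upper bound (\ref{eq2.5.4}) of Lemma~\ref{lem2.5.2} with the Brownian tail estimate (\ref{eq2.1.2'}), applied after rearranging (\ref{eq1.2.1}) using the explicit form of $R$ in (\ref{eq1.6.1})--(\ref{eq1.6.2}). A direct computation of $(RY(t))_k$ yields, for $k \in \{2,\ldots,6\}$,
\begin{equation*}
(RY(t))_k = c_k Y_1(t) + \delta_3 Y_k(t) + (1-\delta_3) S(t),
\end{equation*}
where $S(t) := \sum_{m=2}^6 Y_m(t)$, $c_k = 1$ for $k \in \{2,\ldots,5\}$, and $c_6 = 1+\delta_1$.

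For (\ref{eq2.10.4}), I substitute this into (\ref{eq1.2.1}) to obtain
\begin{equation*}
Y_k(t) + (1-\delta_3)\!\! \sum_{\substack{\ell \neq k \\ \ell \in \{2,\ldots,6\}}}\!\! Y_\ell(t) = Z_k(t) - Z_k(0) - B_k(t) + t - c_k Y_1(t).
\end{equation*}
The event in (\ref{eq2.10.4}) then forces $Z_k(t) - Z_k(0) - \delta_1 N_k(t) - B_k(t) \ge \epsilon t$, since the surviving $Y_1(t)$ contributions can be discarded using $Y_1 \ge 0$; and this is controlled by a union bound on $\{\max_{0\le s\le t} Z_k(s) - Z_k(0) - \delta_1 N_k(t) \ge \epsilon t/2\}$ and $\{|B_k(t)| \ge \epsilon t/2\}$, which invoke (\ref{eq2.5.4}) and (\ref{eq2.1.2'}) with parameter $\epsilon/2$.

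For (\ref{eq2.10.3}), I work with the solved form $\delta_3 Y_k(t) = Z_k(t) - Z_k(0) - B_k(t) + t - c_k Y_1(t) - (1-\delta_3) S(t)$, and sum over $k=2,\ldots,6$ to express $(5-4\delta_3) S(t)$ as an explicit linear combination of $\sum_k Z_k(t)$, $\sum_k Z_k(0)$, $\sum_k B_k(t)$, $t$, and $Y_1(t)$. I then apply (\ref{eq2.5.4}) to each $\ell \in \{2,\ldots,6\}$ to bound $Z_\ell(t) - Z_\ell(0) \le \delta_1 N_\ell(t) + \epsilon_0 t$; when summed, the $\sum_\ell Z_\ell(0)$ contribution cancels exactly against the corresponding term in the identity for $(5-4\delta_3) S(t)$, and $\sum_\ell N_\ell(t) = Y_1(t)$, yielding
\begin{equation*}
(5-4\delta_3) S(t) \le 5(1+\epsilon_0) t - 5 Y_1(t) + \Big|\sum_{k=2}^6 B_k(t)\Big|
\end{equation*}
off an event of probability $\aaa e^{-\epsilon' t}$. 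Substituting back into the single-$k$ equation, using $Z_k(t) \ge 0$, and absorbing the Brownian terms via (\ref{eq2.1.2'}), I obtain a lower bound of the form $Y_k(t) \ge t/(5-4\delta_3) - Z_k(0)/\delta_3 - (\alpha_k/\delta_3) Y_1(t) - \epsilon_1 t$, with $\alpha_k$ an explicit constant depending on $\delta_1,\delta_3$ and $\epsilon_1$ as small as desired.

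The conclusion follows from elementary arithmetic: the excess $1/(5-4\delta_3) - 1/5 = 4\delta_3/(5(5-4\delta_3)) > 0$ absorbs $\epsilon_1 t$ once $\epsilon_0,\epsilon_1$ are chosen small enough depending on $\delta_3$, and the inequalities $\alpha_k \le 2$ reduce, under (\ref{eq1.6.4}), to $9\delta_3 \le 10$ (for $k \in \{2,\ldots,5\}$) and $5\delta_1 + 9\delta_3 - 4\delta_1\delta_3 \le 10$ (for $k=6$), both obvious. The main obstacle is recognizing the exact cancellation of $\sum_\ell Z_\ell(0)$ when the upper bounds from Lemma~\ref{lem2.5.2} are summed; this cancellation is precisely what allows (\ref{eq2.10.3}) to depend on $Z_k(0)$ alone rather than on the full initial vector $(Z_2(0),\ldots,Z_6(0))$.
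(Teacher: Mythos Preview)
Your proof is correct and follows essentially the same route as the paper. For (\ref{eq2.10.4}), both you and the paper rewrite (\ref{eq1.2.1}) using the explicit row structure of $R$ to obtain the identity for $Y_k(t)+(1-\delta_3)\sum_{\ell\neq k}Y_\ell(t)$, then invoke (\ref{eq2.5.4}) together with the Brownian tail bound (\ref{eq2.1.2'}). For (\ref{eq2.10.3}), both arguments sum over $k=2,\ldots,6$ to control $(5-4\delta_3)S(t)$ and then substitute back into the single-$k$ identity; the paper sums the probabilistic bound (\ref{eq2.10.4}) directly and passes through the algebraic inequality (\ref{eq2.11.1}), whereas you sum the exact identity and then apply (\ref{eq2.5.4}) termwise, which yields the slightly sharper intermediate bound $(5-4\delta_3)S(t)\le 5(1+\epsilon_0)t-5Y_1(t)+|\sum B_k|$ (with $-5Y_1$ in place of the paper's $+\delta_1 Y_1$). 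This makes your coefficients $\alpha_k=c_k-\beta$ comfortably below $2$, so the arithmetic checks you state are more than sufficient; the two arguments are otherwise the same in structure.
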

\begin{proof}[Proof]
It follows from (\ref{eq1.2.1}) that
\begin{equation}
\label{eq2.10.5}
Z_k(t) - Z_k(0) \ge B_k(t) - t + Y_k(t) +(1 - \delta_3) \sum_{\ell=2,\,\ell\ne k}^{6} Y_{\ell}(t)
\end{equation}
for $k=2,\ldots,6$.  Together with (\ref{eq2.5.4}) of Lemma \ref{lem2.5.2}, (\ref{eq2.10.5}) implies (\ref{eq2.10.4}).

Summing the arguments inside $P(\cdot)$ in (\ref{eq2.10.4}), over $\ell=2,\ldots,6$, gives
\begin{equation*}
P\left((1-\tfrac{4}{5}\delta_3) \sum_{\ell=2}^{6} Y_{\ell}(t) \ge (1+\epsilon)t + \delta_1 Y_1(t) \right)
\le 5\aaa e^{-\epsilon^{\prime}t}.
\end{equation*}
Since $\delta_3 \le \tfrac{1}{10}$,
\begin{equation}
\label{eq2.11.1}
\frac{1-\delta_3}{1-\frac{4}{5}\delta_3} \le 1 - \tfrac{1}{5}\delta_3 - \tfrac{1}{10}\delta_{3}^{2},
\end{equation}
which implies that, for small enough $\epsilon$,
\begin{equation}
\label{eq2.11.2}
P\left((1-\delta_3) \sum_{\ell=2}^{6} Y_{\ell}(t) -t \ge -
\tfrac{1}{5}(1+\tfrac{1}{2}\delta_3)\delta_3 t + \delta_1 Y_1(t) \right)
\le 5\aaa e^{-\epsilon^{\prime}t}.
\end{equation}
By (\ref{eq1.2.1}), one has, for $k=2,\ldots,6$,
$$Z_k(t) - Z_k(0) \le B_k(t) - t + \delta_3 Y_k(t) +(1 - \delta_3) \sum_{\ell=2}^{6} Y_{\ell}(t) + (1+\delta_1)Y_1(t).$$
Off of the exceptional set in (\ref{eq2.11.2}), this is at most
$$ -\tfrac{1}{5}(1+\tfrac{1}{2}\delta_3)\delta_3 t +B_k(t) + \delta_3 Y_k(t) + 2Y_1(t).$$
Solving for $Y_k(t)$, together with the obvious exponential bound on $B_k(t)$, produces (\ref{eq2.10.3}) for a new choice
of $\aaa$ and $\epsilon^{\prime}$.
\end{proof}

In Lemma \ref{lem2.5.2}, we gave upper bounds on $Z_k(\cdot)$ for $k=2,\ldots,6$.  Here, we employ (\ref{eq2.10.4})
and (\ref{eq2.10.3}) of Lemma \ref{lem2.10.2} to obtain an upper bound on $Z_1(\cdot)$.  The bound implies in
particular that, for large $t$, $Y_1(t) > 0$ and hence $Z_1(s) = 0$ at some $s\le t$.
\begin{lem}
\label{lem2.12.1}
For given $\epsilon > 0$, there exist $\aaa$ and $\epsilon^{\prime} > 0$ such that, for each $t\ge 0$,
\begin{equation}
\label{eq2.12.2}
P\left(Z_1(t) - Z_1(0) - \tfrac{3}{\delta_3}(Y_1(t) + Z_6(0)) \ge -\tfrac{1}{30}\delta_4 t \right) 
\le \aaa e^{-\epsilon^{\prime}t}.
\end{equation}
\end{lem}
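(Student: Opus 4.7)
The plan is to start from the SRBM equation \eqref{eq1.2.1} for the first coordinate, and use the explicit form of the first row of $R$ to write
\[
Z_1(t) - Z_1(0) = B_1(t) - t + Y_1(t) + (1+\delta_2)\sum_{k=2}^5 Y_k(t) + (1-\delta_4) Y_6(t).
\]
The coefficient $(1-\delta_4)$ in front of $Y_6(t)$ is where the needed negative drift has to come from. First I would rewrite the combination $(1+\delta_2)\sum_{k=2}^5 Y_k(t) + (1-\delta_4)Y_6(t)$ so that the sum $\sum_{k=2}^5 Y_k(t)$ appears with the coefficient $(1-\delta_3)$ that is natural in \eqref{eq2.10.4}. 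A short algebraic manipulation gives
\[
(1+\delta_2)\sum_{k=2}^5 Y_k(t) + (1-\delta_4)Y_6(t) = \tfrac{1+\delta_2}{1-\delta_3}\Big[(1-\delta_3)\sum_{k=2}^5 Y_k(t) + Y_6(t)\Big] + c\,Y_6(t),
\]
where $c := (1-\delta_4) - \tfrac{1+\delta_2}{1-\delta_3} = -\tfrac{\delta_2 + \delta_3 + \delta_4(1-\delta_3)}{1-\delta_3}$ is strictly negative.

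Next, applying \eqref{eq2.10.4} with $k=6$ (for which $N_6(t) = Y_1(t)$), the bracketed quantity is at most $(1+\epsilon)t + \delta_1 Y_1(t)$ outside a set of probability at most $\aaa e^{-\epsilon^{\prime}t}$. Substituting this and using \eqref{eq2.10.3} with $k=6$ to replace the remaining $cY_6(t)$ term (which, since $c<0$, is bounded above by $c\cdot[\tfrac{1}{5}t - \tfrac{1}{\delta_3}(Z_6(0) + 2Y_1(t))]$), I arrive at a bound of the form
\[
Z_1(t) - Z_1(0) \le B_1(t) + A\,t + B\,Y_1(t) + \tfrac{|c|}{\delta_3}Z_6(0),
\]
where
\[
A = \frac{4(\delta_2+\delta_3) - \delta_4(1-\delta_3) + 5\epsilon(1+\delta_2)}{5(1-\delta_3)}, \qquad B = 1 + \tfrac{(1+\delta_2)\delta_1}{1-\delta_3} + \tfrac{2|c|}{\delta_3}.
\]

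It then remains to verify, using the hypotheses $\delta_2 + \delta_3 \le \tfrac{1}{6}\delta_4$, $\delta_1\le\delta_3\le .1$, and $\delta_4<1$, that the coefficients satisfy $A \le -\tfrac{1}{30}\delta_4$ and $B \le \tfrac{3}{\delta_3}$, as well as $|c|/\delta_3 \le 3/\delta_3$, for sufficiently small $\epsilon$. The first inequality is the tightest: substituting $4(\delta_2+\delta_3) \le \tfrac{2}{3}\delta_4$ into the numerator of $A$ leaves $\tfrac{2}{3}\delta_4 - \delta_4(1-\delta_3)$, which is at most $-\tfrac{7}{30}\delta_4$ once $\delta_3 \le .1$, leaving ample room to absorb the $5\epsilon(1+\delta_2)$ term for $\epsilon$ chosen small enough (depending only on $\delta_4$). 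For $B$, the bound $\delta_1 \le \delta_3 \le .1$ keeps the middle term bounded by a constant, and $|c|$ is bounded by a constant by \eqref{eq1.6.3}--\eqref{eq1.6.4}, so $B$ is dominated by $2|c|/\delta_3 < 3/\delta_3$ with room to spare.

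Finally I absorb $B_1(t)$ into the exponential bound using \eqref{eq2.1.2'} applied to $\epsilon$ small enough compared to $\tfrac{1}{30}\delta_4$, and union-bound over the exceptional events from \eqref{eq2.10.4}, \eqref{eq2.10.3}, and the Brownian bound. The main (but only moderately deep) obstacle is the bookkeeping in step three: one has to keep track of why the negative coefficient $c$ on $Y_6(t)$, when converted via the lower bound \eqref{eq2.10.3} into a drift term $-\tfrac{|c|}{5}t$, is strong enough to overwhelm the positive drift $\tfrac{(1+\delta_2)(1+\epsilon)}{1-\delta_3}t - t$ that survives after using \eqref{eq2.10.4}. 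This is precisely where hypothesis \eqref{eq1.6.3} enters in an essential way.
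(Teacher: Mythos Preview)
Your proof is correct and follows essentially the same route as the paper's: both start from the exact expression for $Z_1(t)-Z_1(0)$ in \eqref{eq1.2.1}, control the positive contribution of $\sum_{k=2}^6 Y_k$ via \eqref{eq2.10.4}, extract the crucial negative drift from the $Y_6$ term via the lower bound \eqref{eq2.10.3}, and then verify the resulting coefficients using \eqref{eq1.6.3}--\eqref{eq1.6.4}. The one minor organizational difference is that the paper groups the pushing terms as $(1+\delta_2)\sum_{k=2}^{6}Y_k(t) - (\delta_2+\delta_4)Y_6(t)$ and bounds the first sum by \emph{summing} \eqref{eq2.10.4} over all $\ell=2,\ldots,6$, whereas you regroup so that the single instance of \eqref{eq2.10.4} with $k=6$ (namely $Y_6+(1-\delta_3)\sum_{k=2}^5 Y_k\le (1+\epsilon)t+\delta_1 Y_1$) applies directly; your version avoids the averaging step and the auxiliary inequality \eqref{eq2.11.1}, at the cost of slightly different bookkeeping for the constants $A$ and $B$. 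Either way the numerics go through with room to spare, and your identification of \eqref{eq1.6.3} as the place where the negative drift wins is exactly the point.
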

\begin{proof}[Proof]
On account of (\ref{eq1.2.1}),
\begin{equation}
\label{eq2.12.3}
Z_1(t) - Z_1(0) \le B_1(t) - t + Y_1(t) + (1 + \delta_2) \sum_{k=2}^{6} Y_{k}(t) - (\delta_2 +\delta_4)Y_6(t).
\end{equation}
One bounds $(1+\delta_2)\sum_{k=2}^{6}Y_k(t)$ by employing (\ref{eq2.10.4}) after summing over $\ell =2,\ldots,6$,
and one bounds $(\delta_2 + \delta_4)Y_6(t)$ by employing (\ref{eq2.10.3}).  It then follows with a little
algebra that the right side of (\ref{eq2.12.3}) is at most
\begin {equation}
\label{eq2.12.4}
\begin{split}
(\tfrac{9}{10}\delta_2 + \delta_3 - \tfrac{1}{5}\delta_4)t + & \tfrac{3}{\delta_3} (Y_1(t) + Z_6(0)) \\
& \le  -\tfrac{1}{30}\delta_4 t + \tfrac{3}{\delta_3}(Y_1(t) + Z_6(0))
\end{split}
\end{equation}
off of a set of probability $\aaa e^{-\epsilon^{\prime}t}$, for appropriate $\aaa$ and $\epsilon^{\prime} > 0$.  For the
bound on the left side of (\ref{eq2.12.4}), one employs the bounds on $\delta_i$ 
in (\ref{eq1.6.3}) and (\ref{eq1.6.4}), together with (\ref{eq2.10.4}), (\ref{eq2.10.3}) 
and an analog of (\ref{eq2.11.1}).  For the
inequality in (\ref{eq2.12.4}), one uses $\delta_2 + \delta_3 \le \frac{1}{6}\delta_4$.  It follows from
(\ref{eq2.12.3}) and (\ref{eq2.12.4}) that, off of the exceptional set,
$$Z_1(t) - Z_1(0) \le -\tfrac{1}{30}\delta_4 t +\tfrac{3}{\delta_3}(Y_1(t) + Z_6(0)),$$
which implies (\ref{eq2.12.2}).
\end{proof}
\section{A Brownian pursuit model and Foster's criterion}
In this section, we first discuss a Brownian pursuit model, which was mentioned briefly at the end of 
Section 1.  Using a result of Ratzkin and Treibergs \cite{RT}, it is employed to show that the 
expected time for at least one of the coordinates $Z_k(\cdot)$, $k=2,\ldots,5$, to hit $0$ is finite.  In 
Proposition \ref{prop3.5.2}, we apply this result to obtain a lower bound on $\sum_{k=2}^{5} Y_k(\cdot)$ 
that will be used later in the
paper.  We then show an appropriate version of Foster's criterion.  Foster's criterion is a tool for showing
the positive recurrence of a Markov process.  Since the stopping times we will employ are random, we need a variant of the
standard version.

\subsection*{A Brownian pursuit model}
The pursuit model consists of $n$ standard $1$-dimensional Brownian motions, $X_{k}(\cdot)$, $k=2,\ldots,n+1$, 
that ``pursue" another Brownian motion $X_{1}(\cdot)$.    
The $n$ Brownian motions are referred to as \emph{predators} and the other Brownian motion as the \emph{prey}.  The prey will be
said to be \emph{captured} at time $t$ if $t$ is the first time at which $X_{1}(t) = X_{k}(t)$ for some $k=2,\ldots,n+1$.  All 
Brownian motions are assumed to move independently.  

One wishes to know whether the expected time for capture is finite or infinite.  When there are initially predators on each
side of the prey, one can show that the expected capture time is finite.  When all of the predators are on one side of the
prey, the expected capture time is infinite for $n \le3$ and finite for $n\ge4$.  This and a number of related problems
were considered in Bramson and Griffeath \cite{BG} in the context of simple symmetric random walk.  There, the behavior 
for  $n \le3$ was demonstrated and simulations were given that suggested the behavior for $n\ge4$.  Li and Shao \cite{LS}
showed finite expected capture time for Brownian motion for $n\ge5$ and Ratzkin and Treibergs \cite{RT} 
more recently showed this for $n=4$.  

Ratzkin and Treibergs \cite{RT} showed finite expected capture time by bounding the tail of the capture time $T$.  
Their result can be formulated as follows:
\begin{thm}
\label{thm3.1.1}
For any initial state where all four of the predators are within distance $1$ and to the right of the prey, 
\begin{equation}
\label{eq3.1.2}
P(T>t) \le \aaa t^{-(1+\eta)}
\end{equation}
for appropriate $\aaa$ and all $t\ge0$, where $\eta = .000073$.  Consequently,
\begin{equation}
\label{eq3.1.3}
E[T] < \infty.
\end{equation}
\end{thm}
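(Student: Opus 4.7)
The plan is to reduce the pursuit problem to an exit-from-cone problem for a standard Brownian motion in $\mathbb{R}^{4}$, and then to apply the classical asymptotic relating the tail of the exit time to the principal Dirichlet eigenvalue of the associated spherical domain. Set $W_{k}(t) = X_{k+1}(t) - X_{1}(t)$ for $k=1,\ldots,4$; then $W(\cdot)$ is a $4$-dimensional (correlated) Brownian motion whose covariance per unit time is the nonsingular matrix $C = I + ee^{T}$ with $e=(1,1,1,1)^{T}$, and the capture time $T$ is precisely the first exit of $W(\cdot)$ from the open orthant $\mathbb{R}_{+}^{4}$; by hypothesis $W(0) \in (0,1]^{4}$. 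Writing $C = AA^{T}$ and setting $\widetilde Z(t) = A^{-1}W(t)$, the process $\widetilde Z(\cdot)$ is a standard $4$-dimensional Brownian motion, and the image of $\mathbb{R}_{+}^{4}$ under $A^{-1}$ is an open convex cone $\mathcal{C}\subset\mathbb{R}^{4}$; write $D = \mathcal{C}\cap S^{3}$ for its spherical cross-section.

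The next step is to invoke the classical cone-exit asymptotic (Burkholder, DeBlassie, Ba\~nuelos--Smits): if $\tau_{\mathcal{C}}$ denotes the first exit time from a Lipschitz cone $\mathcal{C}\subset\mathbb{R}^{d}$ of standard Brownian motion started at an interior point, then
$$ P(\tau_{\mathcal{C}} > t) \;\sim\; C_{0}\, t^{-p/2} \qquad \text{as } t\to\infty, $$
where $p = p(D)$ is the unique positive solution of $p(p+d-2) = \lambda_{1}(D)$ and $\lambda_{1}(D)$ is the principal Dirichlet eigenvalue of the Laplace--Beltrami operator on $D$. With $d=4$ this gives $p = -1 + \sqrt{1+\lambda_{1}(D)}$, so $E[T] < \infty$ reduces to the strict inequality $p > 2$, i.e.\ $\lambda_{1}(D) > 8$, and the explicit tail bound (\ref{eq3.1.2}) with $\eta = 0.000073$ follows once one establishes the slightly stronger estimate $\lambda_{1}(D) > 8 + 12\eta + 4\eta^{2}$.

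The heart of the argument, and the main obstacle, is therefore this eigenvalue estimate on the spherical domain $D$. The domain $D$ inherits the action of $S_{4}$ permuting the four predator coordinates, and the ground state eigenfunction is $S_{4}$-invariant, so one may restrict attention to a $1/24$ fundamental subdomain with Neumann boundary condition along each reflection hyperplane. On this simpler piece the Rayleigh quotient is estimated via explicit trial functions built from separation of variables and Gegenbauer/Legendre factors adapted to the cone's geometry, and compared against $8$ by means of monotonicity of Dirichlet eigenvalues under spherical inclusion. This is precisely where Ratzkin--Treibergs sharpen the earlier Li--Shao bound (which handles $n\ge 5$ predators only): they extract just enough positive excess over the critical value $8$ to secure $p > 2 + 2\eta$. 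Once this delicate eigenvalue inequality is in hand, (\ref{eq3.1.2}) is immediate from the cone asymptotic and (\ref{eq3.1.3}) then follows by integration.
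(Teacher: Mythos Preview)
The paper does not prove this theorem at all: it is stated as a quotation of the main result of Ratzkin and Treibergs \cite{RT}, with only a one-sentence description of their method (``rephrasing the pursuit model in terms of an eigenvalue problem for the departure time of an $n$-dimensional Brownian motion from an appropriate generalized cone''). Your outline is an accurate and more detailed sketch of exactly that approach --- the reduction via $W_k = X_{k+1}-X_1$ to exit of a correlated Brownian motion from the orthant, the linear change of variables to a standard Brownian motion exiting a cone $\mathcal{C}$, the DeBlassie/Ba\~nuelos--Smits asymptotic $P(\tau_{\mathcal C}>t)\asymp t^{-p/2}$ with $p(p+2)=\lambda_1(D)$, and the identification of the decisive inequality $\lambda_1(D)>8$ --- so there is nothing to compare: you have correctly reconstructed the cited argument, and the hard eigenvalue estimate you flag as ``the main obstacle'' is precisely the content of \cite{RT} that the present paper takes as a black box.
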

The analogous result for $n=5$ is less delicate, which \cite{RT} showed with $\eta = .0634$.  The reasoning in both
\cite{LS} and \cite{RT} relies on rephrasing the pursuit model in terms of an eigenvalue problem for the departure
time of an $n$-dimensional Brownian motion from an appropriate generalized cone. This type of problem was also studied in
DeBlassie \cite{DeB}.  (See \cite{LS} for additional references.)

We will employ both (\ref{eq3.1.2}) and (\ref{eq3.1.3}) for Proposition \ref{prop3.5.2}.  
(The first inequality is not needed, but applying it
makes one of the steps more explicit.)  We note that, by (\ref{eq1.2.1}), for $k=2,\ldots,5$ and all $t\ge0$,
\begin{equation}
\label{eq3.2.1}
\begin{split}
Z_k(t) - Z_1(t) \le & (Z_k(0) - Z_1(0)) + (B_k(t) - B_1(t)) \\ 
& -\delta_2 \sum_{k=2}^{5} Y_k(t) + (\delta_4 - \delta_3) Y_6(t).
\end{split}
\end{equation}
When $Y_6(t) = 0$, this implies
\begin{equation}
\label{eq3.2.2}
Z_k(t) - Z_1(t) \le (Z_k(0) - Z_1(0)) + (B_k(t) - B_1(t)).
\end{equation}
Set
\begin{align}
\label{eq3.3.0a}
& T_1(x) = \min\{t: Z_1(t) - Z_1(0) \ge x\}, \\
\label{eq3.3.0b}
& T_6 = \min\{t: Z_6(t) = 0\}.
\end{align} 
By employing Theorem \ref{thm3.1.1} and (\ref{eq3.2.2}), it is easy to show the following proposition.
\begin{prop}
\label{prop3.3.1}
Suppose that for a given $x\ge0$, $max_{k=2,\ldots,5}Z_k(0) \le x$.  Then, for $\eta = .000073$ and
an appropriate constant $\aaa$ not depending on $x$,
\begin{equation}
\label{eq3.3.2}
P(T_1(x) \wedge T_6 \ge x^2 t) \le \aaa t^{-(1+\eta)}
\end{equation}
for all $t\ge0$.  Consequently, for appropriate $\bbb$ not depending on $x$,
\begin{equation}
\label{eq3.3.3}
E[T_1(x) \wedge T_6 ] < \bbb x^2.
\end{equation}
\end{prop}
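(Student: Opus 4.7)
The plan is to couple the SRBM to the Brownian pursuit model of Theorem \ref{thm3.1.1}. Since $Y_6(\cdot)$ is continuous, nondecreasing, and increases only at times $s$ where $Z_6(s) = 0$, one has $Y_6(s) = 0$ for every $s \le T_6$. Consequently, (\ref{eq3.2.2}) is in force throughout the interval $[0, T_6]$: for each $k = 2,\ldots,5$ and each $s \le T_6$,
$$
Z_k(s) - Z_1(s) \;\le\; (Z_k(0) - Z_1(0)) + (B_k(s) - B_1(s)).
$$

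Next I would set up the Brownian pursuit on the same probability space, using the fact that $B_1,\ldots,B_5$ are independent standard Brownian motions (since $\Sigma = I$). Take the prey to be $X_1(s) = -x + B_1(s)$ and the four predators to be $X_k(s) = Z_k(0) + B_k(s)$ for $k = 2,\ldots,5$. Since $0 \le Z_k(0) \le x$, all four predators begin to the right of the prey and at distance at most $2x$. Let $T$ denote the capture time of this pursuit. The key claim is
$$
T_1(x) \wedge T_6 \;\le\; T \quad \text{almost surely}.
$$
If $T_6 \le T$ this is immediate; otherwise $T < T_6$, and at time $T$ one has $B_1(T) - B_k(T) = x + Z_k(0)$ for some $k$. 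Substituting $s = T$ into the displayed inequality and using $Z_k(T) \ge 0$ yields $Z_1(T) - Z_1(0) \ge x$, i.e.\ $T_1(x) \le T$.

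To finish, I would control the distribution of $T$ by Brownian scaling: rescaling space by $2x$ and time by $4x^2$ preserves the law of standard Brownian motion and turns this pursuit into one in which the four predators all lie within distance $1$ to the right of the prey. Theorem \ref{thm3.1.1} then gives $P(T \ge x^2 t) \le \aaa (t/4)^{-(1+\eta)}$ for all $t > 0$, which, combined with the coupling, yields (\ref{eq3.3.2}) after absorbing the factor $4^{1+\eta}$ into the constant. The bound (\ref{eq3.3.3}) follows from $E[T_1(x) \wedge T_6] \le E[T] = \int_0^\infty P(T > s)\,ds$, splitting the integral at $s = 4x^2$ and using that $\int_1^\infty u^{-(1+\eta)}\,du < \infty$.

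The only delicate step is the coupling inequality $T_1(x) \wedge T_6 \le T$; its content is that $Y_6$ must be frozen at $0$ on the entire interval $[0, T_6]$ so that the clean bound (\ref{eq3.2.2}) applies on precisely the right interval, and that the capture equation for $T$ must match the constant $x + Z_k(0)$ that emerges after substitution. Once the coupling is in hand, the remainder consists of Brownian scaling and a single integration of a power-law tail.
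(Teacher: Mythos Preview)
Your proposal is correct and follows essentially the same route as the paper's own proof: both use that $Y_6\equiv 0$ on $[0,T_6]$ so that (\ref{eq3.2.2}) holds there, both compare to the four-predator pursuit via $B_1(s)-B_k(s)$ after a $2x$/$4x^2$ space--time rescaling, and both invoke Theorem \ref{thm3.1.1}. The only cosmetic difference is that you phrase the comparison as an explicit coupling inequality $T_1(x)\wedge T_6\le T$, whereas the paper works directly with the event $\{Z_1(s)-Z_1(0)<x\text{ for all }s\le x^2t;\ T_6\ge x^2t\}$ and bounds it by $\{B_1(s)-\min_k B_k(s)<2x\text{ for all }s\le x^2t\}$.
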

\begin{proof}
By scaling space and time by $2x$ and $4x^2$, respectively, it follows from (\ref{eq3.1.2}) of Theorem \ref{thm3.1.1} that
$$ P\left(B_1(s) - \min_{2\le k\le5} B_k(s) < 2x \text{ for all } s\le x^2 t\right) \le \aaa t^{-(1+\eta)}$$
for a new choice of $\aaa$.  On account of (\ref{eq3.2.2}) and the bounds on $Z_k(0)$, $k=2,\ldots,5$, 
this implies that
\begin{equation*}
\begin{split}
P(Z_1 &  (s) - Z_1(0) < x \text{ for all } s\le x^2 t; \,T_6\ge x^2 t) \\ 
& \le P\left(Z_1(s) - Z_1(0) - \min_{2\le k\le 5} Z_k(s) < x \text{ for all } s\le x^2 t;\, T_6  \ge x^2 t\right) \\ 
& \le \aaa t^{-(1+\eta)}.
\end{split}
\end{equation*} 
The inequality in (\ref{eq3.3.2}) follows immediately.  
\end{proof}

\subsection*{Application of Proposition \ref{prop3.3.1}}
We define the stopping times
%
%
\begin{equation}
\label{eq3.5.1}
T_2(x) = \min\left\{t: \sum_{k=2}^{5} Y_k(t) = \tfrac{1}{2}(t+x^2)\right\} \wedge T_6 \wedge 5x^{5/\eta},
\end{equation}
%
%
where $\eta$ is as in Theorem \ref{thm3.1.1}.  
%
%
In Sections 4-6, we will require upper bounds on $E[T_2(x)]$ in order to ensure the linear growth  of $Z_1(\cdot)$
mentioned at the end of Section 1.  Here, we employ Proposition \ref{prop3.3.1} to obtain the following bounds.
\begin{prop}
\label{prop3.5.2}
Suppose that $\max_{k=2,\ldots,5} Z_k(0) \le x$, with $x\ge 2$.  Then, for appropriate $\aaa$ not depending on
$x$, 
\begin{equation}
\label{eq3.5.3}
E[T_2(x)] \le \aaa x^2.
\end{equation}
\end{prop}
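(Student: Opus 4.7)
My plan is to establish a tail estimate $P(T_2(x) > x^2 t) \le \ssc t^{-(1+\eta)}$ for $t \ge 1$ and integrate it against the trivial $P(T_2(x) > s) \le 1$ for $s \le x^2$. The starting point is Proposition \ref{prop3.3.1}, which already delivers such a tail for $T_1(x) \wedge T_6$; in particular this controls directly the contribution from the $T_6$ factor in the minimum defining $T_2(x)$.

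The key reduction is as follows. On $\{T_2(x) > s\}$ with $s < 5x^{5/\eta}$ we have $T_6 > s$ and $\sum_{k=2}^{5} Y_k(s) < \tfrac{1}{2}(s+x^2)$. Since $Y_6 \equiv 0$ on $[0,s]$, summing (\ref{eq2.10.3}) of Lemma \ref{lem2.10.2} over $k=2,\ldots,5$ and using $Z_k(0) \le x$ gives, off an exponentially small exceptional set,
$$\tfrac{1}{2}(s+x^2) > \sum_{k=2}^{5} Y_k(s) \ge \tfrac{4}{5}s - \tfrac{1}{\delta_3}\bigl(4x + 8 Y_1(s)\bigr).$$
Rearranging, $Y_1(s) \gtrsim \tfrac{1}{80}\delta_3 s$ once $s$ is a sufficiently large multiple of $x^2$ (with constant depending on $\delta_3$). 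So on this event $Z_1(\cdot)$ is pinned to $0$ for a positive fraction of $[0,s]$.

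To show that this regime has $t^{-(1+\eta)}$-type probability, I would iterate Proposition \ref{prop3.3.1} from stopping times at which $Z_1 = 0$. The tail bound (\ref{eq2.5.3}) of Lemma \ref{lem2.5.2} (noting $N_k \equiv 0$ for $k=2,\ldots,5$) gives $\max_{k=2,\ldots,5}\max_{r\le s} Z_k(r) \le \ssc x$ with a $t^{-(1+\eta)}$-type tail over a time horizon of order $x^2$. Each iteration then takes expected time $O(x^2)$, and, using the $Z_1$ equation (which, with $Y_6=0$, reads $Y_1(s) + (1+\delta_2)\sum_{k=2}^{5} Y_k(s) = Z_1(s) - Z_1(0) - B_1(s) + s$), every increment in $Z_1$ or in $Y_1$ must be matched by an increment in $\sum_{k=2}^{5} Y_k$. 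Accumulating these increments within $O(1)$ iterations contradicts $\sum Y_k < (s+x^2)/2$, yielding the desired tail.

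The main obstacle is closing the iteration quantitatively: one has to verify that the number of iterations needed to force $\sum Y_k$ past $\tfrac{1}{2}(s+x^2)$ stays bounded in $t$, and simultaneously that $\max_k Z_k$ remains $O(x)$ throughout (so that the per-iteration estimate from Proposition \ref{prop3.3.1} does not degrade). Lemma \ref{lem2.12.1} can be used as a sanity check in the opposite direction (it forces $Y_1$ to be large when $Z_1$ is small), complementing (\ref{eq2.10.3}). The deterministic cap $5x^{5/\eta}$ built into the definition of $T_2(x)$ absorbs any remaining low-probability tail from the iteration, yielding $E[T_2(x)] \le \ssc x^2$.
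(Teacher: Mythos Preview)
Your sketch identifies several correct ingredients (Proposition~\ref{prop3.3.1} for the tail of $T_1(x)\wedge T_6$, the $Y_1$ lower bound via Lemma~\ref{lem2.10.2}, the deterministic cap $5x^{5/\eta}$), but the ``iteration'' step where the argument is supposed to close contains a genuine gap, and two of the supporting claims are false as stated.

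First, the sentence ``Lemma~\ref{lem2.5.2} gives $\max_{k}\max_{r\le s}Z_k(r)\le Cx$ with a $t^{-(1+\eta)}$-type tail over a time horizon of order $x^2$'' is incorrect: (\ref{eq2.5.3}) yields a Gaussian tail in the spatial variable, and over $s=O(x^2)$ the probability that $\max_k Z_k$ exceeds $Cx$ is a fixed constant, not $t^{-(1+\eta)}$. The $t^{-(1+\eta)}$ decay is special to the pursuit problem and cannot be regenerated from (\ref{eq2.5.3}). Second, the claim that ``every increment in $Z_1$ or in $Y_1$ must be matched by an increment in $\sum_{k=2}^{5}Y_k$'' does not follow from the $Z_1$ equation you wrote: an increment in $Y_1$ is matched by an increment in $s-B_1(s)$, not in $\sum Y_k$. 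With these two pieces gone, there is no mechanism left in your outline that forces $\sum_{k=2}^{5}Y_k$ past $\tfrac12(s+x^2)$ in boundedly many steps, and hence no way to obtain the needed tail for $T_2(x)$.

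The paper proceeds differently, and in a single pass rather than by iteration. It introduces the (non-stopping) time $\tau$, the first time after $S_1(x)$ at which some $Z_k$, $k=2,\ldots,5$, hits $0$, and the deterministic time $T_1'(x)=4(T_1(x)+x^2)$. The crucial pathwise facts, proved in Proposition~\ref{prop3.7.1}, are: (i) with high probability $\tau<T_1(x)$ (event $A_2^c$), since on $[S_1(x),T_1(x)]$ the coordinate $Z_1$ cannot climb by $x/2$ against drift $-1$; (ii) with high probability $Z_1(\cdot)>0$ on $[\tau,T_1'(x)]$ (event $A_7^c$), since $Z_1(\tau)\ge x/2$ and the reflecting $Z_K$ imparts a positive drift to $Z_1$; (iii) consequently, on $[\tau,T_1'(x)]$ one has $\sum_{k=2}^{5}Y_k(T_1'(x))\ge (T_1'(x)-\tau)-(B_K(T_1'(x))-B_K(\tau))\ge \tfrac12(T_1'(x)+x^2)$ with high probability, so $A_4^c$ holds. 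This yields $T_2(x)\le T_1'(x)\wedge T_6\le 4(T_1(x)\wedge T_6)+4x^2$ on the good set $(A')^c$, after which (\ref{eq3.3.3}) gives $E[T_2(x);(A')^c]\le Cx^2$; on the small bad set $A'$ the cap $5x^{5/\eta}$ and the bound $P(A')\le Cx^{-5/\eta-2}$ finish the job. The step you are missing is precisely (ii)--(iii): once one coordinate $Z_K$ has reached $0$ and $Z_1$ is kept away from $0$, $\sum Y_k$ grows \emph{linearly} thereafter, which is what converts a single use of Proposition~\ref{prop3.3.1} into control of $T_2(x)$.
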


In Sections 4-6, we will also require upper bounds on $P(A)$, where
\begin{equation}
\label{eq3.20.1}
A = \{\omega: T_2(x) = 5 x^{5/\eta} \}.
\end{equation}
These bounds are obtained in Proposition \ref{prop3.7.1}, which we state shortly.

In order to demonstrate Propositions \ref{prop3.5.2} and \ref{prop3.7.1}, we need to rule out 
certain behavior of $Z(\cdot)$ except on sets
of small probability.  For this, we introduce the following notation.  Let $S_1(x)$ denote the last time $t$ before 
$T_1(x)$ at which $Z_1(t) - Z_1(0) = \tfrac{1}{2}x$, for given $x$.  Set
\begin{equation}
\label{eq3.6.1}
\tau = \min \left\{t: \min_{2\le k\le 5} Z_k(t) = 0 \text{ for } t\ge S_1(x) \right\}.
\end{equation}
(If $\tau$ does not occur, set $\tau= \infty$.)  Neither $S_1(x)$ nor $\tau$ is a stopping time.  We also set
\begin{equation}
\label{eq3.6.2}
t_e = x^{5/\eta}, \quad t_f = 5x^{5/\eta} \quad \text{ and } \quad T_{1}^{\prime} (x) = 4(T_1(x) + x^2).
\end{equation}

Using this notation, we define:
\begin{align}
\label{eq3.6.3}
& A_1 = \{\omega: T_1(x) \wedge T_6 > t_e \}, \\
\label{eq3.6.4}
& A_2 = \{\omega: T_1(x) \le T_6 \wedge \tau \wedge t_e \}, \\
\label{eq3.6.5}
& A_3 = \{\omega: \tau < T_1(x) \le t_e,\, T_6 > T_{1}^{\prime}(x) \}, \\
\label{eq3.6.7}
& A_4 = \left\{\omega: \sum_{k=2}^{5}Y_k(T_{1}^{\prime}(x))  < \tfrac{1}{2}(T_{1}^{\prime}(x)+x^2) \right\}, \\
\label{eq3.20.2}
& A_5 = \{\omega: T_6 > T_{1}^{\prime}(x)\}.
\end{align}
One can check that 
\begin{equation}
\label{eq3.20.3}
A_5 \subseteq A_1 \cup A_2 \cup A_3.
\end{equation}
Also, note that
\begin{equation}
\label{eq3.20.4}
T_2(x) \le T_{1}^{\prime}(x) \wedge T_6 \qquad \text{on } A_{4}^{c}.
\end{equation}

Using this notation, it is not difficult to show the following lemma.
\begin{lem}
\label{lem3.20.5}
For $A$ as in (\ref{eq3.20.1}),
\begin{equation}
\label{eq3.20.6}
A \subseteq A^{\prime} \stackrel{\mathrm{def}}{=} A_1 \cup A_2 \cup (A_3 \cap A_4).
\end{equation}
\end{lem}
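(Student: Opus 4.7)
The plan is to fix $\omega \in A$ and argue by a case split on whether $T_1(x) \le t_e$ or not, leaning on the already-noted inclusion (\ref{eq3.20.3}) to handle the second case in bulk. The crucial consequence of $T_2(x) = t_f$ to extract from (\ref{eq3.5.1}) is two-fold: first, $T_6 \ge t_f$; second, writing $T_{\ast} = \min\{t: \sum_{k=2}^{5} Y_k(t) = \tfrac{1}{2}(t+x^2)\}$, one has $T_{\ast} \ge t_f$. Since the continuous map $t \mapsto \sum_{k=2}^{5} Y_k(t) - \tfrac{1}{2}(t+x^2)$ takes the value $-\tfrac{1}{2}x^2 < 0$ at $t=0$, the intermediate value theorem shows it remains strictly negative on $[0, T_{\ast})$, and hence on $[0, t_f)$.

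If $T_1(x) > t_e$, then together with $T_6 \ge t_f > t_e$ this will give $T_1(x) \wedge T_6 > t_e$, so $\omega \in A_1$. Otherwise $T_1(x) \le t_e$, in which case the key estimate to establish is
\begin{equation*}
T_{1}^{\prime}(x) = 4(T_1(x) + x^2) \le 4(t_e + x^2) < t_f.
\end{equation*}
Granted this, $T_6 \ge t_f > T_{1}^{\prime}(x)$ will yield $\omega \in A_5$, while evaluating the strict inequality from the previous paragraph at the admissible time $t = T_{1}^{\prime}(x) < t_f$ will yield $\omega \in A_4$. I would then combine with $A_5 \subseteq A_1 \cup A_2 \cup A_3$ from (\ref{eq3.20.3}) and intersect with $A_4$ to get
\begin{equation*}
\omega \in (A_1 \cap A_4) \cup (A_2 \cap A_4) \cup (A_3 \cap A_4) \subseteq A_1 \cup A_2 \cup (A_3 \cap A_4) = A^{\prime}.
\end{equation*}

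The main (and really only) obstacle is verifying the quantitative step $4(t_e + x^2) < 5t_e$, i.e.\ $4x^2 < x^{5/\eta}$. This is where the otherwise mysterious exponent $5/\eta$ built into the cutoff $5x^{5/\eta}$ in the definition of $T_2(x)$ pays off: because $\eta = .000073$, the gap $5/\eta - 2$ is enormous, so the factor $x^{5/\eta - 2}$ dwarfs $4$ for any $x \ge 2$, which is the standing assumption inherited from Proposition \ref{prop3.5.2}. Beyond this numeric check, the remainder of the proof is a routine unpacking of the definitions (\ref{eq3.6.3})--(\ref{eq3.20.2}).
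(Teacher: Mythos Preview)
Your proof is correct and uses the same three ingredients as the paper: the observation that $\omega\in A$ forces both $T_6\ge t_f$ and $\sum_{k=2}^5 Y_k(t)<\tfrac12(t+x^2)$ for $t<t_f$; the numeric check $4(t_e+x^2)<t_f$; and the inclusion (\ref{eq3.20.3}). The organization differs slightly: the paper introduces the auxiliary set $A_6=\{T_2'(x)\ge t_f\}$ (where $T_2'(x)$ is your $T_\ast\wedge T_6$) and splits $A_6$ according to $A_5$ versus $A_5^c$, whereas you split directly on whether $T_1(x)\le t_e$. Your route is arguably a bit more direct, since you land in $A_4\cap A_5$ in one stroke and then invoke (\ref{eq3.20.3}), while the paper first reduces via (\ref{eq3.20.3}) and only afterwards argues $A_3\cap A_6\subseteq A_3\cap A_4$. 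One small remark: you justify $4x^2<x^{5/\eta}$ by citing $x\ge 2$ from Proposition \ref{prop3.5.2}, but the lemma is also invoked in Proposition \ref{prop3.7.1} under $x\ge 1$; the paper's own proof glosses over this point (asserting ``$T_1'(x)<t_f$ on $A_3$'' without comment), and in any case the inequality holds for all $x$ bounded away from $1$ since $5/\eta-2$ is enormous.
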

\begin{proof}
Set
$$ T_{2}^{\prime}(x) = \min\left\{t: \sum_{k=2}^{5} Y_k(t) = \tfrac{1}{2}(t+x^2)\right\} \wedge T_6 $$
and $A_6 = \{\omega: T_{2}^{\prime}(x) \ge t_f \}$.  It suffices to show that $A_6 \subseteq A^{\prime}$.

Since $T_{1}^{\prime}(x) < t_f$ on $A_3$,
$$ A_3 \cap A_6 \subseteq A_3 \cap A_4. $$
Consequently, by (\ref{eq3.20.3}) and the definition of $A^{\prime}$,
\begin{equation}
\label{eq3.20.7}
A_5 \cap A_6 \subseteq A_1 \cup A_2 \cup (A_3 \cap A_6) \subseteq A^{\prime}.
\end{equation}
On the other hand,
$$ A_{5}^{c}  \cap A_6 \subseteq A_1 \subseteq A^{\prime}. $$
Together with (\ref{eq3.20.7}), this implies $A_6 \subseteq A^{\prime}$, as desired.
\end{proof}

The bounds on $P(A^{\prime})$ in Proposition \ref{prop3.7.1} will be applied in the proof of Proposition \ref{prop3.5.2}
and the bounds on $P(A)$ will be applied in the proof of Proposition \ref{prop4.21.1}.  
Proposition \ref{prop3.3.1}, Lemma \ref{lem2.1.1} and (\ref{eq1.2.1}) are the main tools 
in the proof of Proposition \ref{prop3.7.1}.
\begin{prop}
\label{prop3.7.1}
Suppose that $\max_{k=2,\ldots,5} Z_k(0) \le x$, with $x\ge1$.  Then, for an appropriate $\aaa$
not depending on $x$,
\begin{equation}
\label{eq3.7.2}
P(A) \le P(A^{\prime}) \le \aaa x^{-\frac{5}{\eta} -2}.
\end{equation}
\end{prop}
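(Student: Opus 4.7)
By Lemma~\ref{lem3.20.5}, $A\subseteq A'=A_1\cup A_2\cup(A_3\cap A_4)$, so the union bound reduces the problem to showing each of $P(A_1)$, $P(A_2)$, $P(A_3\cap A_4)$ is at most $\aaa x^{-5/\eta-2}$ for an appropriate constant.

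For $P(A_1)$, since $A_1=\{T_1(x)\wedge T_6>t_e\}$ with $t_e=x^2\cdot x^{5/\eta-2}$, Proposition~\ref{prop3.3.1} applied with $t=x^{5/\eta-2}$ gives
\[
P(A_1)\le \aaa t^{-(1+\eta)}=\aaa x^{-(5/\eta-2)(1+\eta)}=\aaa x^{-5/\eta-3+2\eta},
\]
which is bounded by a constant multiple of $x^{-5/\eta-2}$ for $x\ge 1$ and $\eta$ small (certainly $\eta=.000073$).

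For $P(A_2)$, I would use that on $A_2$, throughout the subinterval $[S_1(x),T_1(x)]$ none of $Y_1,\ldots,Y_6$ increases: $Z_1>Z_1(0)+x/2>0$ on this subinterval by definition of $S_1(x)$ and $T_1(x)$, $\min_{k=2,\ldots,5}Z_k>0$ on $[S_1(x),T_1(x))$ from $\tau>T_1(x)$, and $Z_6>0$ on $[S_1(x),T_1(x))$ from $T_6>T_1(x)$. Consequently, each $Z_i$ evolves there as free Brownian motion with drift $-1$. Moreover, since $T_6>T_1(x)$, display (\ref{eq3.2.2}) holds on $[0,T_1(x)]$, and combined with $Z_k(s)\ge 0$ and $Z_k(0)\le x$ it yields the pursuit bound $B_1(s)-\min_{k=2,\ldots,5}B_k(s)\le 2x$ for every $s\le T_1(x)\le t_e$. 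A rescaled application of Theorem~\ref{thm3.1.1}, in the spirit of the proof of Proposition~\ref{prop3.3.1}, together with Brownian estimates from Lemma~\ref{lem2.1.1} to handle the random endpoint $T_1(x)$, then gives $P(A_2)\le \aaa x^{-5/\eta-2}$.

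For $P(A_3\cap A_4)$, observe that on $A_3$ we have $T_6>T_1'(x)$, hence $Y_6\equiv 0$ on $[0,T_1'(x)]$, and $\tau<T_1(x)\le t_e$ forces some $Z_{k_0}$ with $k_0\in\{2,\ldots,5\}$ to hit $0$ at $\tau$. Applying (\ref{eq2.10.3}) at $t=T_1'(x)$ with $k=6$, and using $Y_6(T_1'(x))=0$, gives $Y_1(T_1'(x))\ge\delta_3 T_1'(x)/10-Z_6(0)/2$ off an exceptional set of probability $\le \aaa e^{-\epsilon^{\prime}T_1'(x)}$. Summing (\ref{eq2.10.3}) over $k=2,\ldots,5$ gives a matching lower bound on $\sum_{k=2}^{5}Y_k(T_1'(x))$ involving $Y_1(T_1'(x))$ and $\max_{k=2,\ldots,5}Z_k(0)\le x$. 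Combining these with the constraint $\sum_{k=2}^{5}Y_k(T_1'(x))<\tfrac12(T_1'(x)+x^2)$ of $A_4$, the bound $T_1'(x)\ge 4x^2$, and an upper bound on $Y_1(T_1'(x))$ extracted from (\ref{eq1.2.1}) and Lemma~\ref{lem2.12.1}, yields an incompatibility that is violated only on the exceptional sets, whose total probability is at most $\aaa e^{-\epsilon^{\prime}x^2}\le \aaa x^{-5/\eta-2}$.

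The main obstacle is the bound on $P(A_2)$: although the pursuit inequality $B_1(s)-\min_k B_k(s)\le 2x$ provably holds for all $s\le T_1(x)$, this random time may be much smaller than $t_e$, so Theorem~\ref{thm3.1.1} does not apply directly at scale $t_e$. Resolving this should require a case split on whether $T_1(x)$ exceeds some intermediate threshold (e.g.~$t_e^{1-\epsilon}$ with $\epsilon$ chosen in terms of $\eta$), with the short-time case controlled via the Brownian tail estimates of Lemma~\ref{lem2.1.1}, and with the large-time case giving the required $x^{-5/\eta-2}$ bound via Theorem~\ref{thm3.1.1}. The balancing of these two contributions is where the specific exponent $5/\eta$ in the definitions of $t_e$ and $t_f$ is dictated.
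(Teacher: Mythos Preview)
Your bound on $P(A_1)$ matches the paper exactly. But your treatment of $P(A_2)$ and $P(A_3\cap A_4)$ each contains a real gap.

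\textbf{On $P(A_2)$.} You correctly observe that on $A_2$, over $[S_1(x),T_1(x)]$, none of $Y_1,\ldots,Y_6$ increases, so $Z_1$ evolves as pure Brownian motion with drift $-1$. But then you abandon this and return to the pursuit inequality, which creates exactly the obstacle you describe. The point is that you already have everything you need: since $Z_1(T_1(x))-Z_1(S_1(x))=\tfrac12 x$ by definition of the two times, and since over $[S_1(x),T_1(x)]$ you have $Z_1(T_1(x))-Z_1(S_1(x))=(B_1(T_1(x))-B_1(S_1(x)))-(T_1(x)-S_1(x))$, it follows that
\[
B_1(T_1(x))-B_1(S_1(x))=(T_1(x)-S_1(x))+\tfrac12 x.
\]
With $T_1(x)\le t_e$ on $A_2$, a direct application of (\ref{eq2.1.4}) with $\epsilon=1$, $u=\tfrac12 x$ gives $P(A_2)\le \aaa(t_e+1)e^{-\epsilon' x}\le \aaa' e^{-\epsilon'' x}$, since $t_e$ is only polynomial in $x$. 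No pursuit argument, no case split on the size of $T_1(x)$, is needed; the ``main obstacle'' you identify is not there.

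\textbf{On $P(A_3\cap A_4)$.} Your route through (\ref{eq2.10.3}) with $k=6$ produces a lower bound $Y_1(T_1'(x))\ge \tfrac{\delta_3}{10}T_1'(x)-\tfrac12 Z_6(0)$, but the hypotheses of the proposition impose \emph{no} bound on $Z_6(0)$ (only on $Z_k(0)$ for $k=2,\ldots,5$), so this inequality is useless in general. The paper instead introduces the auxiliary event $A_7=\{Z_1(s)=0 \text{ for some } s\in[\tau,T_1'(x)]\}$ and bounds $P(A_3\cap A_7)$ and $P(A_3\cap A_4\cap A_7^c)$ separately. On $A_3\cap A_7^c$ one has $Y_1(s)-Y_1(\tau)=0$ for $s\in[\tau,T_1'(x)]$, and since $Z_K(\tau)=0$ for the index $K$ hit at $\tau$, the $K$-th coordinate of (\ref{eq1.2.1}) gives directly
\[
\sum_{k=2}^{5}Y_k(T_1'(x))\ge (T_1'(x)-\tau)-(B_K(T_1'(x))-B_K(\tau)).
\]
Because $\tau<T_1(x)$ and $T_1'(x)=4(T_1(x)+x^2)$, one has $T_1'(x)-\tau\ge \tfrac34 T_1'(x)+x^2$, and (\ref{eq2.1.4}) then shows $A_4$ fails off a set of probability $\le \aaa e^{-\epsilon' x^2}$. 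The bound on $P(A_3\cap A_7)$ uses that on $A_3$, once a coordinate $k\in\{2,\ldots,5\}$ reflects at $\tau$, $Z_1$ acquires positive drift (via the $\delta_2$ entries of $R$) and so cannot drop from $Z_1(\tau)\ge\tfrac12 x$ to $0$ except with probability exponentially small in $x$. None of this touches $Z_6(0)$.
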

\begin{proof}
In addition to $A_1$, $A_2$, $A_3$ and $A_4$, we employ the set
\begin{equation}
\label{eq3.6.6}
A_7 = \{\omega: Z_1(s) = 0 \text{ for some } s\in [\tau,T_{1}^{\prime}(x)] \}.
\end{equation}
We proceed to obtain upper bounds on each of $P(A_1)$, $P(A_2)$, $P(A_3 \cap A_7)$ and $P(A_3 \cap A_4 \cap A_{7}^{c})$.
We first note that, by applying (\ref{eq3.3.2}) of Proposition \ref{prop3.3.1}, with $t=x^{\frac{5}{\eta} - 2}$,
\begin{equation}
\label{eq3.7.4}
P(A_1) \le \bbb x^{-\frac{5}{\eta} - 2}
\end{equation}
for appropriate $\bbb$.

In order to bound $P(A_2)$, we need to show that, over $[S_1(x),t_e]$, $T_1(x)$ typically will not occur before
$T_6 \wedge \tau$ occurs; on this set, $Z_1(\cdot)$ will drift toward $0$ and away from $x$.  First note,
by (\ref{eq1.2.1}), that when 
$T_1(x) \le T_6 \wedge \tau$,
$$ Z_1(T_1(x)) - Z_1(S_1(x)) = (B_1(T_1(x)) - B_1(S_1(x))) - (T_1(x) - S_1(x)).$$
It then follows from the definitions of $T_1(x)$ and $S_1(x)$ that 
\begin{equation}
\label{eq3.8.2}
B_1(T_1(x)) - B_1(S_1(x)) = T_1(x) - S_1(x) + \tfrac{1}{2}x.
\end{equation}
But, by (\ref{eq2.1.4}) of Lemma \ref{lem2.1.1}, the probability of (\ref{eq3.8.2}) occurring when $T_1(x) \le t_e$
is at most 
$$ \bbb (t_e +1)e^{-\epsilon^{\prime} x} \le \ccc e^{-\frac{1}{2}\epsilon^{\prime} x} $$
for appropriate $\bbb$, $\ccc$ and $\epsilon^{\prime} > 0$.  Consequently,
\begin{equation}
\label{eq3.8.3}
P(A_2) \le \ccc e^{-\frac{1}{2}\epsilon^{\prime} x}. 
\end{equation}

We next show that $P(A_3 \cap A_7)$ is small.  This event will typically not occur because the coordinates
$k=2,\ldots,5$ that are reflecting at $0$ after $\tau$ will impart a positive drift to $Z_1(\cdot)$.  Restricting
our attention to the event $A_3$, let $K$ be the index at which $Z_K(\tau) = 0$.  Also, let $\tau^{\prime}$ be any
random time with
\begin{equation}
\label{eq3.9.1}
\tau \le \tau^{\prime} \le T_{1}^{\prime}(x) \wedge \min\{s>\tau: Z_1(s) = 0\}.  
\end{equation}
Since $\tau^{\prime} \le T_{1}^{\prime}(x) < T_6$, it follows from (\ref{eq1.2.1}) that
\begin{equation}
\label{eq3.9.2}
\sum_{k=2}^{5} (Y_k(\tau^{\prime}) - Y_k(\tau)) \ge (\tau^{\prime} - \tau) - (B_K(\tau^{\prime}) -B_K(\tau)).
\end{equation}
Applying (\ref{eq1.2.1}) for the first coordinate and then substituting in (\ref{eq3.9.2}), one obtains
\begin{equation}
\label{eq3.9.3}
Z_1(\tau^{\prime}) - Z_1(\tau) \ge 
\tilde{B}(\tau^{\prime}) - \tilde{B}(\tau) + \delta_2(\tau^{\prime} - \tau),
\end{equation}
where $\tilde{B}(t) \stackrel{\text{def}}{=} B_1(t) - (1 + \delta_2)B_K(t)$. 

Again applying (\ref{eq2.1.4}) of Lemma \ref{lem2.1.1}, one has
\begin{equation}
\label{eq3.9.4}
\begin{split}
P\left(\tilde{B}(\tau^{\prime}) - \tilde{B}(\tau) + \delta_2 (\tau^{\prime} - \tau) \le - \tfrac{1}{4}x \right)
& \le \bbb (T_{1}^{\prime}(x) + 1)e^{-\epsilon^{\prime}x} \\ 
& \le \ccc e^{-\frac{1}{2}\epsilon^{\prime}x},
\end{split}
\end{equation}
with $T_1(x) \le t_e$ and the definitions of $T_{1}^{\prime}(x)$ and $t_e$ being used in the latter inequality.
Applying this to (\ref{eq3.9.3}), one obtains that, since $Z_1(\tau) \ge \tfrac{1}{2}x$, 
$$ P(Z_1(\tau^{\prime}) \le \tfrac{1}{4}x; A_3) \le \ccc e^{-\frac{1}{2}\epsilon^{\prime}x} $$
for $\tau^{\prime}$ as in (\ref{eq3.9.1}).  This implies that
\begin{equation}
\label{eq3.10.2}
P(A_3 \cap A_7) \le \ccc e^{-\frac{1}{2}\epsilon^{\prime}x}.
\end{equation}

We now show that
\begin{equation}
\label{eq3.10.3}
P(A_3 \cap A_4 \cap A_{7}^{c}) \le \bbb e^{-\frac{1}{4}\epsilon^{\prime}x}
\end{equation}
for an appropriate choice of $\bbb$.  On the set $A_3 \cap A_{7}^{c}$, it follows from (\ref{eq1.2.1}) that
\begin{equation}
\label{eq3.10.4}
\sum_{k=2}^{5} Y_k(T_{1}^{\prime}(x)) \ge (T_{1}^{\prime}(x) - \tau) - (B_K(T_{1}^{\prime}(x)) - B_K(\tau)), 
\end{equation}
where $K$ is the index at which $Z_K(\tau) = 0$.  Since $\tau < T_1(x)$, it follows from the definition of
$T_{1}^{\prime}(x)$ that the right side of (\ref{eq3.10.4}) is at least
\begin{equation*}
\tfrac{1}{2}(T_{1}^{\prime}(x) + x^2) + 
[\tfrac{1}{2}x^2 + \tfrac{1}{4}(T_{1}^{\prime}(x) - \tau) - (B_K(T_{1}^{\prime}(x)) - B_K(\tau))]. 
\end{equation*}
Again applying (\ref{eq2.1.4}), this is greater than $\tfrac{1}{2}(T_{1}^{\prime}(x) +x^2)$ off of a set of
probability $\bbb e^{-\epsilon^{\prime}x^2}$, for appropriate $\bbb$ and $\epsilon^{\prime} > 0$. 
Consequently,
\begin{equation}
\label{eq3.11.1}
\begin{split}
P(A_3 & \cap A_4 \cap A_{7}^{c}) \\ 
& = P\left(\sum_{k=2}^{5} Y_k(T_{1}^{\prime}(x)) 
< \tfrac{1}{2} (T_{1}^{\prime}(x) + x^2); A_3 \cap A_{7}^{c}\right) \\
& \le \bbb e^{-\epsilon^{\prime}x^2}.  
\end{split}
\end{equation}

One has
$$ A \subseteq A^{\prime} = A_1 \cup A_2 \cup (A_3 \cap A_4) \subseteq A_1 \cup A_2 \cup (A_3 \cap A_7) 
\cup (A_3 \cap A_4 \cap A_{7}^{c}). $$
Combining (\ref{eq3.7.4}), (\ref{eq3.8.3}), (\ref{eq3.10.2}) and (\ref{eq3.11.1}) therefore implies (\ref{eq3.7.2})
for an appropriate choice of $\bbb$.
\end{proof}

Using Proposition \ref{prop3.7.1}, the demonstration of Proposition \ref{prop3.5.2} is quick.

\begin{proof} [Proof of Proposition \ref{prop3.5.2}]
%
%
%
It follows from (\ref{eq3.20.3}) that

$$ A_{4}^{c} \cup A_{5}^{c} \supseteq (A_1 \cup A_2 \cup  (A_3 \cap A_4))^c = (A^{\prime})^c . $$
%
%
Because of (\ref{eq3.20.4}),
\begin{equation}
\label{eq3.11.3}
T_2(x) \le T_{1}^{\prime}(x) \wedge T_6
\end{equation}
on $A_{4}^{c}$.  On the other hand, (\ref{eq3.11.3}) holds trivially on $A_{5}^{c}$.
%
%
%
%
Along with (\ref{eq3.6.2}), this implies that
\begin{equation}
\label{eq3.11.5}
T_2(x) \le 4(T_1(x) + x^2) \wedge T_6 \le 4(T_1(x) \wedge T_6) + 4x^2
\end{equation}
on $(A^{\prime})^c$, and so, by (\ref{eq3.3.3}) of Proposition \ref{prop3.3.1},
\begin{equation}
\label{eq3.12.1}
E[T_2(x); (A^{\prime})^c] \le \ccc x^2
\end{equation}
for appropriate $\ccc$.

The bound $T_2(x) \le 5x^{5/\eta}$ always holds and so, by Proposition \ref{prop3.7.1},
\begin{equation}
\label{eq3.12.2}
E[T_2(x); A^{\prime}] \le \ddd/x^2
\end{equation}
for appropriate $\ddd$.  Inequality (\ref{eq3.5.3}) follows immediately from (\ref{eq3.12.1}) and (\ref{eq3.12.2}).
\end{proof}
\subsection*{Foster's criterion}
Foster's criterion is a standard tool for showing positive recurrence 
of a Markov process when the process has a ``uniformly negative drift" off of a bounded set in the state space 
(see, e.g., Bramson \cite{B} or Foss and Konstantopoulos \cite{FK} ).  Versions of Foster's criterion typically 
employ deterministic stopping times   
whose length depends only on the initial state. Here, we require a version of Foster's criterion with random times,
which is given below.

We state the proposition for SRBM defined on the induced $Z$-path space, consisting of continuous paths
on $\mathbb{R}_{+}^{6}$ with the natural filtration, in order to facilitate the definition
of the sequence of stopping times employed in its proof.  The SRBM can always be projected onto this space.  
The proof of the proposition employs an elementary martingale argument that extends to more general Feller 
processes.  

Here and later on in the article, we employ the norm
\begin{equation}
\label{eq3.13.1}
\|z\| = z_1 + \sum_{k=2}^{5} z_{k}^{2} + z_6 \qquad \text{for } z = (z_1,\ldots,z_6), z_k \ge 0.
\end{equation}
We set, for $\delta > 0$,
\begin{equation*}
\tau_A(\delta) = \inf\{t \ge \delta: Z(t) \in A\}; 
\end{equation*}
$E_z[\cdot]$ denotes the expectation for the process with $Z(0) = z$, and $\mathcal{F}(t), t\ge 0$, denotes the filtration
of $\sigma$-algebras associated with the SRBM.
%
%
\begin{prop}
\label{prop3.14.1}
Suppose that, for some $\delta,\epsilon,\kappa > 0$ and a family of stopping times $\sigma (z)$,
$z\in \mathbb{R}_{+}^{6}$, with $\sigma (z) \ge \delta$,  $E_z[\sigma (z)]$ is measurable in $z$ 
and the SRBM $Z(\cdot)$ satisfies
\begin{equation}
\label{eq3.14.2}
E_z [\|Z(\sigma (z))\|] \le (\|z\| \vee \kappa) - \epsilon E_z[\sigma (z)]
\end{equation}
%
%
for all $z$.  Then
\begin{equation}
\label{eq3.14.3}
E_z[\tau_A  (\delta)] \le \frac{1}{\epsilon}(\|z\| \vee \kappa) \qquad \text{for all } z,
\end{equation}
where $A = \{z: \|z\| \le \kappa\}$.  Hence, $Z(\cdot)$ is positive recurrent. 
\begin{proof} 
The argument is a slight modification of that for the generalized Foster's criterion given on page 94 of \cite {B}.
Set $\sigma_0=0$, and let $\sigma_1 < \sigma_2 < \ldots$ denote the stopping times defined inductively,
with $\sigma_n - \sigma_{n-1}$, conditioned on $Z(\sigma_{n-1}) = z$, having the 
same law as $\sigma(z)$ given $Z(0) = z$.  By
(\ref{eq3.14.2}) and the strong Markov property, for all $z$,
\begin{equation}
\label{eq3.14.3'}
E_z [\|Z(\sigma_n)\| \, | \, \mathcal{F}(\sigma_{n-1})] \le (\|Z(\sigma_{n-1})\| \vee \kappa) 
- \epsilon E_{Z(\sigma_{n-1})}[\sigma(Z(\sigma_{n-1}))]
\end{equation}
for almost all $\omega$.

Set $M(0) = \|z\| \vee \kappa$ and
\begin{equation}
\label{eq3.14.4}
M(n) = \|Z(\sigma_n)\| + \epsilon \sigma_n  \qquad \text{for } n\ge1.
\end{equation}
Also, set $\mathcal{G}(n) = \mathcal{F}(\sigma_n)$.  On account of (\ref{eq3.14.3'}),
\begin{equation}
\label{eq3.14.5}
E_z[M(n) \, | \, \mathcal{G}(n-1)] \le M(n-1) \qquad \text{for } n\le \rho,
\end{equation}
where $\rho$ is the first time $n > 0$ at which $M(n)\in A$.  So, $M(n \wedge \rho)$ is a nonnegative supermartingale
on $\mathcal{G}(n)$.

It follows from the Optional Sampling Theorem that
\begin{equation}
\label{eq3.15.1}
E_z[M(\rho)] \le \|z\| \vee \kappa.
\end{equation}
Note that $\tau_A (\delta) \le \sigma_{\rho}$.  Therefore, by (\ref{eq3.14.4}) and (\ref{eq3.15.1}),
\begin{equation}
\label{eq3.15.2}
\epsilon E_z [\tau_A (\delta)] \le E_z [M(\rho)] \le \|z\| \vee \kappa,
\end{equation}
which implies (\ref{eq3.14.3}) as desired.
\end{proof}

\end{prop}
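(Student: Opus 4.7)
The plan is to iterate the one-step drift inequality (\ref{eq3.14.2}) along a sequence of stopping times generated by repeatedly ``resetting'' via the strong Markov property, construct a discrete-time supermartingale that encodes both the current norm and the accumulated time, and then apply the Optional Sampling Theorem to compare the expected hitting time of $A$ to $\|z\| \vee \kappa$. The lower bound $\sigma(z) \ge \delta$ is what makes the iterated times go to infinity and dominates $\tau_A(\delta)$; the $\|z\| \vee \kappa$ on the right of (\ref{eq3.14.2}) is what forces us to stop the supermartingale at the first return to $A$.

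First I would construct, using the strong Markov property of $Z(\cdot)$ on the $Z$-path space, a sequence $0=\sigma_0 < \sigma_1 < \sigma_2 < \cdots$ of stopping times such that, conditional on $Z(\sigma_{n-1}) = z$, the increment $\sigma_n - \sigma_{n-1}$ has the same law as $\sigma(z)$ under $P_z$. The measurability of $E_z[\sigma(z)]$ in $z$ assumed in the hypothesis is what makes this construction go through cleanly. Let $\mathcal{G}(n) = \mathcal{F}(\sigma_n)$ and let $\rho$ be the first $n \ge 1$ with $Z(\sigma_n) \in A$, i.e.\ with $\|Z(\sigma_n)\| \le \kappa$.

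Next I would set $M(0) = \|z\| \vee \kappa$ and $M(n) = \|Z(\sigma_n)\| + \epsilon\,\sigma_n$ for $n \ge 1$, and check that $M(n \wedge \rho)$ is a nonnegative $\mathcal{G}(n)$-supermartingale. For $n=1$ the inequality $E_z[M(1)] \le M(0)$ is exactly (\ref{eq3.14.2}). For $2 \le n \le \rho$, the strong Markov property gives $E_z[M(n) \mid \mathcal{G}(n-1)] \le \|Z(\sigma_{n-1})\| \vee \kappa + \epsilon\,\sigma_{n-1}$, and since $n-1 < \rho$ means $\|Z(\sigma_{n-1})\| > \kappa$, the maximum collapses to $\|Z(\sigma_{n-1})\|$, giving $M(n-1)$. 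Applying the Optional Sampling Theorem to the nonnegative supermartingale yields $E_z[M(\rho)] \le \|z\| \vee \kappa$, hence $\epsilon E_z[\sigma_\rho] \le \|z\| \vee \kappa$. Since $\sigma_\rho \ge \sigma_1 \ge \delta$ and $Z(\sigma_\rho) \in A$, we have $\tau_A(\delta) \le \sigma_\rho$, and (\ref{eq3.14.3}) follows; positive recurrence of $Z(\cdot)$ is then immediate from finiteness of hitting times of the compact sublevel set $A$ from every $z$.

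The main subtlety, rather than an obstacle, is the correct handling of the $\vee \kappa$ on the right of (\ref{eq3.14.2}): the supermartingale property of $M(n)$ only holds up to $\rho$, and the definition $M(0) = \|z\| \vee \kappa$ (rather than $\|z\|$) is precisely tuned so that the base step $n=1$ fits into the same inequality used for $n \ge 2$. Once this bookkeeping is in place, the Optional Sampling step is standard because $M(n \wedge \rho) \ge 0$ and the supermartingale is dominated in $L^1$ on the stopped level. No probabilistic machinery beyond the strong Markov property and Optional Sampling is required, so the proof is essentially a careful adaptation of the classical Foster argument with deterministic times replaced by the random times $\sigma(z)$.
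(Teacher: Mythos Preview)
Your proposal is correct and follows essentially the same argument as the paper: you construct the iterated stopping times $\sigma_n$, define $M(0)=\|z\|\vee\kappa$ and $M(n)=\|Z(\sigma_n)\|+\epsilon\sigma_n$, verify the supermartingale property up to the first return time $\rho$ to $A$, apply Optional Sampling, and conclude via $\tau_A(\delta)\le\sigma_\rho$. Your explicit discussion of why the $\vee\,\kappa$ forces the choice $M(0)=\|z\|\vee\kappa$ and why the supermartingale inequality only holds for $n\le\rho$ matches the paper's reasoning precisely (and in fact makes that point more transparent than the paper does).
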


\section{Main steps of the proof of Theorem \ref{thm1.7.2}}
Here, we present the main steps of the proof of Theorem  \ref{thm1.7.2}, postponing their proofs until Sections 5 and 6.
Our goal is to show that (\ref{eq3.14.2}) of Proposition \ref{prop3.14.1} is satisfied for each SRBM satisfying the
conditions of Theorem \ref{thm1.7.2}.  It then follows from the proposition 
that the SRBM is positive recurrent.  

We employ the notation $D_1,D_2,\ldots$ and
$\epsilon_1,\epsilon_2,\ldots$, as well as  the previous notation $C_1,C_2,\ldots$, 
to denote positive constants.  As earlier, $C_i$ denote terms
whose precise value is not of interest to us, with the same symbol sometimes being reused.  
The terms $D_i$ and $\epsilon_i$ will sometimes take general values in the
statements of the propositions, in which case specific values will be employed at the end of the section 
to demonstrate (\ref{eq3.14.2}).  We state the values of $D_i$ and $\epsilon_i$ we will apply, in most cases, 
when they are first introduced.

Proposition \ref{prop4.2.1} is the first result.  It states in essence that, after an appropriate time, either the norm
of the initial state of the process decreases by a large factor or the sixth coordinate
is bounded away from $0$.  In the first case,
(\ref{eq3.14.2}) will be demonstrated by using Proposition \ref{prop4.5.1}.  In the second case, this will be done
by using Propositions \ref{prop4.6.1}-\ref{prop4.8.1}.
In the statement of Proposition \ref{prop4.2.1}, one can choose $\ssb = 24\cdot16\cdot4+4$ and 
$\ssd = 24\cdot16\cdot40/\delta_1\delta_3$.  At the end of the section, we will set
$\epsa = \epsagg^2$;
the term $\epsagg \in (0,\delta_1\delta_2\delta_3/1200]$, with the exact value being specified then.
\begin{prop}
\label{prop4.2.1}
Suppose that $Z(0) = z$ with $z_1\le M$, $z_{k}^{2} \le M$, for $k=2,\ldots,5$, and $z_6 \le M$.  \text{(a)} For given 
$\epsa > 0$, there exist $\ssc$, $\ssb \ge 1$ and $\epsilon^{\prime} > 0$ such that, for all $M$,
\begin{align}
\label{eq4.2.2}
& P(Z_k(M) > \ssb M) \le \ssc e^{-\epsilon^{\prime}M} \qquad \text{for } k=1,6, \\
\label{eq4.2.3}
& P(Z_k(M) > \epsa M) \le \ssc e^{-\epsilon^{\prime}M} \qquad \text{for } k=2\ldots,5, \\
\label{eq4.2.4}
& E[Z_k(M)^2] \le \ssb M  \qquad \text{for } k=2,\ldots,5.
\end{align}
\text{(b)} For appropriate $\ssd > 0$ and each $\epsagg \in (0,\tfrac{1}{40}\delta_1\delta_3]$, 
there exist sets $F_1 \in \mathcal{F}(M)$, $F_2 \in \mathcal{F}(M)$ and 
$\epsilon^{\prime}>0$ such that, for large enough $M$,
\begin{align}
\label{eq4.2.5}
& P((F_1 \cup F_2)^c) \le  e^{-\epsilon^{\prime}M} \qquad \text{for } k=1,6, \\
\label{eq4.2.6}
& Z_6(M) \le \epsagg M \text{ on } F_1, \\
\label{eq4.2.7}
& E[Z_k(M)^2; F_1] \le \epsagg \ssd M  \qquad \text{for } k=2,\ldots,5, \\
\label{eq4.2.8}
& Z_6(M) \ge \epsagg M \text{ on } F_2.
\end{align}
\end{prop}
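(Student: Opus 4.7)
For part (a), each of the three bounds is a direct consequence of the tail or second-moment estimates derived in Section~2. I apply \eqref{eq2.4.4} of Lemma~\ref{lem2.4.2} to obtain \eqref{eq4.2.2}, using $Z_k(0) \le M$ for $k \in \{1,6\}$, which gives $\ssb \ge 9$. I apply \eqref{eq2.5.4} of Lemma~\ref{lem2.5.2} with $\epsilon = \epsa/2$ to obtain \eqref{eq4.2.3}, noting that $N_k \equiv 0$ for $k \in \{2,\ldots,5\}$ and that $Z_k(0) \le \sqrt{M} \le (\epsa/2) M$ for $M \ge 4/\epsa^2$. And I combine \eqref{eq2.6.1} of Lemma~\ref{lem2.5.2} with the pointwise bound $Z_k(M)^2 \le 4 Z_k(0)^2 + 4((Z_k(M)-Z_k(0))_+)^2$ (using $(a+b)^2 \le 4 \max(a,b)^2$ for $a,b \ge 0$) and $Z_k(0)^2 \le M$ to obtain \eqref{eq4.2.4} with $\ssb = 24\cdot 16\cdot 4 + 4$.

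For part (b), the sets $F_1,F_2$ are constructed dichotomously around the value of $Z_6(M)$. Set $F_2 = \{Z_6(M) \ge \epsagg M\}$, making \eqref{eq4.2.8} immediate. Define the good event $G$ as the intersection of the high-probability events afforded by Lemmas~\ref{lem2.1.1}, \ref{lem2.10.2}, and \ref{lem2.12.1}, all at $t = M$ with $\epsilon$ parameters taken small relative to $\epsagg$; then $P(G^c) \le e^{-\epsilon' M}$ for $M$ sufficiently large. Set $F_1 = G \cap \{Z_6(M) \le \epsagg M\}$; then \eqref{eq4.2.6} holds by construction and $(F_1 \cup F_2)^c \subseteq G^c$ gives \eqref{eq4.2.5}.

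The substantive content is the estimate \eqref{eq4.2.7}. The plan is to exploit the constraint $Z_6(M) \le \epsagg M$ via the identity $Z_6(M) = Z_6(0) + B_6(M) - M + (1+\delta_1) Y_1(M) + Y_6(M) + (1-\delta_3)\sum_{\ell=2}^5 Y_\ell(M)$ together with the lower bounds \eqref{eq2.10.3} and the matching upper bound \eqref{eq2.10.4} at $k = 6$, so as to extract a sharp quantitative control on $Y_1(M)$ valid on $F_1$. This control then feeds, through the pathwise estimate \eqref{eq2.6.1} and Lemma~\ref{lem2.7.1}'s restricted-moment inequality \eqref{eq2.7.2} (applied with $F = F_1$), into the desired \eqref{eq4.2.7} with $\ssd = 24\cdot 16 \cdot 40/(\delta_1\delta_3)$; the factor $40/(\delta_1\delta_3)$ reflects the inverse of the maximal allowed $\epsagg$ and arises naturally from the $(1/\delta_3)$ and $\delta_1$ coefficients appearing in the two matching inequalities.

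The main obstacle I anticipate is that a naive substitution of the lower bounds \eqref{eq2.10.3} into the $Z_6(M)$-identity produces an inequality whose coefficient of $Y_1(M)$ has the wrong sign (for $\delta_3 \le 1/10$), rendering it vacuous. The correct route exploits the structural asymmetry between coordinate~$1$ and coordinates $2,\ldots,5$ encoded in $R$: by combining $Z_6(M) \le \epsagg M$ with \eqref{eq2.10.4} at $k = 6$ (in which $\delta_1 Y_1(M)$ appears on the right-hand side with the correct sign) and the coordinate-wise lower bounds \eqref{eq2.10.3}, the matching $Y_1(M)$ contributions effectively cancel, yielding a genuine linear upper bound on $Y_1(M)$ of order $\epsagg M/(\delta_1\delta_3)$. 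This linearity in $\epsagg$ is essential, since any argument producing only a $\sqrt{\epsagg}$-type bound (as would arise from a naive Cauchy--Schwarz step on $E[Z_k(M)^2;F_1]$) would be insufficient for the Foster-type calculation that \eqref{eq4.2.7} is intended to serve in Section~4.
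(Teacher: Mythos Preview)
Your treatment of part (a) matches the paper's almost exactly: \eqref{eq2.4.4} for \eqref{eq4.2.2}, \eqref{eq2.5.4} for \eqref{eq4.2.3}, and \eqref{eq2.6.1} plus the trivial bound $Z_k(0)^2\le M$ for \eqref{eq4.2.4}, with the same constant $\ssb=24\cdot16\cdot4+4$.

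Part (b), however, has a genuine gap. Your plan hinges on two claims, and neither holds up.

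\emph{First claim:} combining $Z_6(M)\le\epsagg M$ with \eqref{eq2.10.4} at $k=6$ and \eqref{eq2.10.3} yields $Y_1(M)=O(\epsagg M/\delta_1\delta_3)$. This is not so. To extract information from the upper bound $Z_6(M)\le\epsagg M$ you need a \emph{lower} bound on $Z_6(M)$; substituting \eqref{eq2.10.3} into the $Z_6$-identity produces one, but with $Y_1(M)$-coefficient $(1+\delta_1)-2(5-4\delta_3)/\delta_3$, which is large and negative, so the resulting inequality is a \emph{lower} bound on $Y_1(M)$, not an upper bound. Substituting \eqref{eq2.10.4} instead gives an upper bound on $Z_6(M)$, which is useless against $Z_6(M)\le\epsagg M$. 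There is no cancellation: the two inequalities point in opposite directions and cannot be combined to pin $Y_1(M)$ to order $\epsagg M$. In fact $Z_6(M)\le\epsagg M$ does \emph{not} force $Y_1(M)$ to be small; the sharpest lower bound the $Z_6$-identity affords (via \eqref{eq2.3.3}) is $Z_6(M)\gtrsim(\delta_1+\delta_3)Y_1(M)-\delta_3 M$, which only yields $Y_1(M)\lesssim M$.

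\emph{Second claim:} a bound on $Y_1(M)$ feeds through \eqref{eq2.6.1} and \eqref{eq2.7.2} into \eqref{eq4.2.7}. Neither cited inequality involves $Y_1$ in a way that converts smallness of $Y_1(M)$ into smallness of $E[Z_k(M)^2;F_1]$: \eqref{eq2.6.1} gives $O(M)$ regardless, and \eqref{eq2.7.2} depends on $P(F_1)$, which may be close to $1$. You have not supplied a mechanism here.

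The paper's argument is structurally different. It does not bound $Y_1(M)$ at all. Instead it defines $\tau_k$ as the \emph{last} zero of $Z_k$ before time $M$ and sets $\ffc=\{Y_1(M)-Y_1(\tau_k)>\epsc M$ for some $k\}$ with $\epsc=2\epsagg/\delta_1$. On $\ffc$, the identity \eqref{eq5.2.1} for $Z_6-Z_K$ (which isolates the increment $\delta_1(Y_1(M)-Y_1(\tau_K))$) forces $Z_6(M)\ge\epsagg M$, placing $\ffc$ inside $F_2$. On $\ffc^c$, one shows via \eqref{eq2.5.4} and \eqref{eq2.10.3} that each $Z_k$, $k=2,\ldots,5$, must hit $0$ at a stopping time $\tau_k'\in[t_1,M]$ with $t_1=(1-20\epsc/\delta_3)M$; restarting at $\tau_k'$ and applying \eqref{eq2.6.1} over the short window $[\tau_k',M]$ of length at most $(20\epsc/\delta_3)M$ yields $E[Z_k(M)^2;\cdot]\le 24\cdot16\cdot20(\epsc/\delta_3)M=\epsagg\ssd M$. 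The $\epsagg$-linearity comes from the \emph{length of the window}, not from any bound on $Y_1(M)$ or on $P(F_1)$. This is the idea missing from your proposal.
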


Depending on whether
$F_1$ or $F_2$ holds, we proceed in different ways.  Under $F_1$, we consider the evolution of the SRBM for an
additional time $\sse M$.  For this, we employ Proposition \ref{prop4.5.1}, which is given below.

We introduce the following terminology for Proposition \ref{prop4.5.1}.  Set
\begin{equation}
\label{eq4.4.1}
\epsb = 6 \epsagg / \delta_3 \quad \text{ and } \quad \sse = 250 \ssb / \delta_3 \delta_4,
\end{equation}
for given $\epsagg > 0$ and $\ssb \ge 1$.  Let $U_1$ be the first time $t$ on the interval
$[0,(\sse - \epsb)M]$ at which $Z_1(t) = \frac{12 \epsagg}{\delta_3}M$,
with $U_1 = (\sse - \epsb)M$ if this does not occur.  Set $U_2 = U_1 + \epsb M \le \sse M$.
The proposition states that $Z_1(U_2)$, $Z_6(U_2)$ and $Z_k(U_2)$, $k=2,\ldots,5$, are all small in an
appropriate sense.  The argument requires $Z_1(t) > 0$ for $t \le U_2$, which enables all other coordinates to drift
toward $0$.  
\begin{prop}
\label{prop4.5.1}
Suppose $Z(0) = z$ satisfies
\begin{align}
\label{eq4.5.2}
&  (12\epsagg /\delta_3)M \le z_1 \le \ssb M, \\   
\label{eq4.5.2'}
&z_k \le  \epsagg M, \qquad \qquad \qquad  \text{for } k = 2,\ldots,6,
%
%
%
\end{align}
for given $\ssb \ge 1$ and $\epsagg \in (0,1]$.  Then, for $U_2$ as given above and large enough $M$,
\begin{align}
\label{eq4.5.4}
& E[Z_k(U_2)] \le (70 \epsagg /\delta_3)M \qquad \text{for } k=1,6, \\
\label{eq4.5.5}
& E[Z_k(U_2)^2] \le (24 \cdot 97 \epsagg /\delta_3)M \qquad \text{for } k=2,\ldots,5.
\end{align}
\end{prop}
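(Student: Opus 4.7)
The plan is to introduce a high-probability ``good event'' $G = \{Z_1(t) > 0 \text{ for all } t \in [0, U_2]\}$, which is equivalent to $Y_1(U_2) = 0$, and to establish both bounds on $G$. By construction $Z_1(t) \ge 12\epsagg M/\delta_3$ on $[0, U_1]$; on the short additional interval $[U_1, U_2]$ of length $\epsb M = 6\epsagg M/\delta_3$ the Brownian fluctuation of $Z_1$ is only $O(\sqrt{M})$, so Lemma \ref{lem2.1.1} yields $P(G^c) \le \aaa e^{-\epsilon^{\prime} M}$. The contribution of $G^c$ to the expectations in (\ref{eq4.5.4}) and (\ref{eq4.5.5}) is then negligible via Lemma \ref{lem2.7.1}, so it suffices to work on $G$.

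On $G$ we have $Y_1 \equiv 0$ on $[0, U_2]$, so reflection pushes on all other coordinates come only from $Y_2,\ldots,Y_6$. Applying Lemma \ref{lem2.12.1} with $Y_1 = 0$ and $Z_6(0) \le \epsagg M$ gives $Z_1$ an average drift of at most $-\delta_4/30$, which forces $U_1$ to be a genuine hitting time (and not the artificial endpoint $(\sse - \epsb)M$) off of a set of exponentially small probability. Thus $Z_1(U_1) = 12\epsagg M/\delta_3$ on the effective event. For the first-moment bound on $Z_1(U_2)$, I would then write the evolution of $Z_1$ over the short interval $[U_1, U_2]$ and bound the increments of $Y_2,\ldots,Y_6$ by invoking the strong Markov property at $U_1$ and applying Lemma \ref{lem2.10.2} to the restarted process (with the $N_k$ term vanishing on $G$). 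Since the reflection coefficients into $Z_1$ are at most $1+\delta_2$ and the drift over $[U_1,U_2]$ contributes $-\epsb M$, the net growth of $Z_1$ on this interval is at most $O(\delta_2\,\epsb M) + O(\sqrt{M})$, yielding $E[Z_1(U_2)] \le 70\epsagg M/\delta_3$ for large $M$.

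For the other coordinates, on $G$ each $Z_k$, $k = 2,\ldots,6$, satisfies an evolution of the form $Z_k(t) = Z_k(0) + B_k(t) - t + \delta_3 Y_k(t) + (1-\delta_3)\sum_{\ell=2}^6 Y_\ell(t)$ with effective negative drift of order $\delta_3/5$, so each $Z_k$ hits $0$ well before $U_2$ since $Z_k(0) \le \epsagg M$. Applying (\ref{eq2.6.1}) of Lemma \ref{lem2.5.2} via the strong Markov property at the first hitting time $\tau_k$, the remaining fluctuation $\max_{\tau_k \le s \le U_2} Z_k(s)$ has second moment at most $24 \cdot 16 (U_2 - \tau_k) \le 24 \cdot 16\, \epsb M = 24 \cdot 96\, \epsagg M/\delta_3$; a small slack for the other exceptional sets yields the constant $24 \cdot 97$ in (\ref{eq4.5.5}). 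The first-moment bound on $Z_6(U_2)$ follows similarly, using the first-moment estimate (\ref{eq2.7.3}) of Lemma \ref{lem2.7.1} in place of (\ref{eq2.6.1}), together with the same symmetry of the $Z_6$ evolution with that of $Z_2,\ldots,Z_5$ on $G$.

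The main technical obstacle I foresee is coordinating the exceptional sets coming from Lemmas \ref{lem2.1.1}, \ref{lem2.5.2}, \ref{lem2.10.2} and \ref{lem2.12.1} applied at the random times $U_1$, $U_2$ and $\tau_k$. In particular, Lemma \ref{lem2.12.1} must be applied under the assumption $Y_1 = 0$ (which is where $G$ enters), and Lemma \ref{lem2.10.2} must be applied to increments over $[U_1, U_2]$ via the strong Markov property, with enough care that the constants come out as sharp as $70$ and $24 \cdot 97$ rather than being degraded by the passage through the various auxiliary events.
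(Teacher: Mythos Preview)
Your overall strategy matches the paper's: show $Z_1>0$ on $[0,U_2]$ off an exponentially small set, use Lemma~\ref{lem2.12.1} to force $U_1$ to be a genuine hitting time, bound $Z_1(U_2)$ via the increment over $[U_1,U_2]$, keep $Z_6$ small using $Y_1\equiv 0$, and for $k=2,\ldots,5$ locate a zero of $Z_k$ close to $U_2$ and apply \eqref{eq2.6.1}.

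There is, however, a real gap in your argument for \eqref{eq4.5.5}. You take $\tau_k$ to be the \emph{first} time $Z_k$ hits $0$ and then assert $U_2-\tau_k\le\epsb M$. This does not follow: since $Z_k(0)\le\epsagg M$ is small and the effective drift is negative, $\tau_k$ will typically occur early, while $U_2$ can be as large as $\sse M$; hence $U_2-\tau_k$ can be of order $\sse M$, and restarting at $\tau_k$ and applying \eqref{eq2.6.1} only yields a second-moment bound of order $\sse M$, which is far too large. What is needed is a zero of $Z_k$ located in the short window $[U_1,U_2]$. The paper obtains this by restarting at the stopping time $U_1$: one first bounds $Z_k(U_1)$ via \eqref{eq2.5.4} of Lemma~\ref{lem2.5.2} (with $N_k=Y_1=0$ on the good set, and $\epsilon$ there chosen small enough relative to $\sse$), and then applies \eqref{eq2.10.3} of Lemma~\ref{lem2.10.2} to the restarted process over the interval of length $\epsb M$ to conclude $Y_k(U_2)-Y_k(U_1)>0$ off an exponentially small set. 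This produces a stopping time $\tau_k'\in[U_1,U_2]$ with $Z_k(\tau_k')=0$, and restarting there and applying \eqref{eq2.6.1} over $[\tau_k',U_2]$ gives the bound $24\cdot 16\,\epsb M$ you want.

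A smaller point: for $Z_1(U_2)$ you propose tracking the increments of $Y_2,\ldots,Y_6$ over $[U_1,U_2]$ via Lemma~\ref{lem2.10.2}. This works but is more than needed; the paper simply applies the crude growth bound \eqref{eq2.4.4} of Lemma~\ref{lem2.4.2} to the process restarted at $U_1$, yielding $Z_1(U_2)\le Z_1(U_1)+8\epsb M=(60\epsagg/\delta_3)M$ off another exponentially small set, with \eqref{eq2.7.3} handling the remainder to reach $(70\epsagg/\delta_3)M$. Likewise, the paper bounds $Z_6(U_2)$ directly by \eqref{eq2.5.4} on $[0,U_2]$ (using $N_6=Y_1=0$), rather than through a separate hitting-time argument.
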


When $F_2$ occurs, we follow the sketch given near the end of Section 1.  In this case, we restart the SRBM at time
$M$ and apply Proposition \ref{prop4.6.1}.  In the proposition, we employ the stopping times 
$T_3(\cdot)$ and $T_4(\cdot)$.  We define
\begin{equation}
\label{eq4.5.6}
T_3(M) = \min \left\{t: \sum_{k=2}^{5} Y_k(t) = \tfrac{1}{6} (\delta_1 t + \epsg M) \right\}
\end{equation}
for given $\epsg \in (0,1]$. 
We then set $T_4(M) = T_3(M)$ off of a set $G_M$ that will be specified in the proof of the proposition,
with $T_4(M) \le T_3(M) \wedge T_6$ holding on $G_M$, where $T_6$ is the stopping time that was defined in (\ref{eq3.3.0b}).  
($T_3(M) < T_6$ will hold off of $G_M$.)  The set $G_M$ will be negligible in the sense of 
(\ref{eq4.6.9}) and (\ref{eq4.6.10}).

In addition to the bounds on $G_M$ in (\ref{eq4.6.9}) and (\ref{eq4.6.10}), Proposition \ref{prop4.6.1} 
gives upper and lower bounds on $T_3(M)$ and $Z_k(T_3(M))$, for $k=1,6$, and upper bounds on $Z_k(T_3(M))$, 
for $k=2,\ldots,5$.  
We will set the constant $\epsd$ in the proposition equal to $\frac{1}{10}$ 
at the end of the section.
\begin{prop}
\label{prop4.6.1}
Suppose $Z(0) = z$ satisfies
\begin{align}
\label{eq4.6.2}
& z_k \le \ssb M \qquad \text{for } k=1,6, \\
\label{eq4.6.3}
& z_k \le \epsg^2 M \qquad \text{for } k=2,\ldots,5, \\
\label{eq4.6.4}
& z_6 \ge \epsg M,
\end{align}
for given $M$, $\ssb$ and $\epsg \in  (0, \tfrac{1}{20}]$.  Then,
on $G_{M}^{c}$ and $T_3(M) < \infty$,
\begin{align}
\label{eq4.6.5}
& T_3(M) \ge \tfrac{1}{30}\epsg M, \\
\label{eq4.6.5'}
& T_3(M) \le T_6, \\
\label{eq4.6.6}
& Z_k(T_3(M)) \le \tfrac{31 \ssb}{\epsg}T_3(M) \qquad \text{for } k=1,6, \\
\label{eq4.6.8}
& Z_k(T_3(M)) \le 31\epsg T_3(M) \qquad \text{for } k=2,\ldots,5, \\ 
\label{eq4.6.7}
& Z_1(T_3(M)) \ge \tfrac{1}{12} \delta_1 \delta_2 T_3(M), \\ 
\label{eq4.6.7'}
& Z_6(T_3(M)) \ge \tfrac{1}{2}\delta_1 T_3(M).
\end{align}
For given $\epsd > 0$ and large enough $M$,
\begin{align}
\label{eq4.6.9}
& E[Z_k(T_4(M)); \, G_M] \le \epsd \qquad \text{for } k=1,6, \\
\label{eq4.6.10}
& E[Z_k(T_4(M))^2; \, G_M] \le \epsd \qquad \text{for } k=2,\ldots,5.
\end{align}
\end{prop}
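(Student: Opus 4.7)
The plan is to isolate an exceptional set $G_M$ of exponentially small probability so that the deterministic-looking bounds (\ref{eq4.6.5})--(\ref{eq4.6.7'}) hold on $G_M^c$ by direct manipulation of (\ref{eq1.2.1}) together with the defining identity $\sum_{k=2}^{5} Y_k(T_3(M)) = \tfrac{1}{6}(\delta_1 T_3(M) + \epsg M)$, while (\ref{eq4.6.9})--(\ref{eq4.6.10}) then follow by combining $P(G_M) \le C_1 e^{-\epsilon' M}$ with polynomial moment bounds on $\max_{s\le T_4(M)} Z_k(s)$ via Lemma \ref{lem2.7.1}.

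Concretely, I would take $G_M$ to be the union of (i) the exceptional sets from Lemmas \ref{lem2.1.1}, \ref{lem2.3.2}, \ref{lem2.4.2}, \ref{lem2.5.2} and \ref{lem2.10.2} evaluated at $t = K\epsg M$ for a suitably large constant $K$, (ii) the event $\{T_6 \le T_3(M)\}$, and (iii) the event $\{T_3(M) > K\epsg M\}$. Each contributes at most $C_1 e^{-\epsilon' M}$, and I would set $T_4(M) = T_3(M)$ on $G_M^c$ and $T_4(M) = T_3(M)\wedge T_6 \wedge K\epsg M$ on $G_M$. On $G_M^c$, (\ref{eq4.6.5'}) is item (ii); (\ref{eq4.6.5}) follows from the upper bound $\sum_{k=2}^{5} Y_k(t) \le 4t + \mathrm{noise}$ provided by (\ref{eq2.3.4'}) applied at $T_3(M)$; combined with (iii), this yields $\tfrac{1}{30}\epsg M \le T_3(M) \le K\epsg M$, so $\epsg M$ and $T_3(M)$ are comparable. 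The upper bounds (\ref{eq4.6.6}) and (\ref{eq4.6.8}) then follow from Lemmas \ref{lem2.4.2} and \ref{lem2.5.2} applied at $t = T_3(M)$, after translating the initial data $z_k \le \ssb M$ and $z_k \le \epsg^2 M$ into multiples of $T_3(M)$ via this comparability. For the lower bounds (\ref{eq4.6.7}) and (\ref{eq4.6.7'}), I substitute the defining identity into
\[ Z_1(T_3) = z_1 + B_1(T_3) - T_3 + Y_1 + (1+\delta_2)\textstyle\sum_{k=2}^{5} Y_k + (1-\delta_4)Y_6 \]
and its row-$6$ analog, noting $Y_6 \equiv 0$ up to $T_3$ on $G_M^c$. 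When $\epsg M$ is large relative to $T_3$, the sum-of-$Y_k$ term dominates; when $\epsg M$ is small, the constraint $\sum_{k=2}^{5} Z_k(T_3) \ge 0$ combined with (\ref{eq1.2.1}) forces $Y_1(T_3)$ to be close to $T_3$, and the $Y_1$ contribution (with weight $1$ in the row-$1$ equation and $1+\delta_1$ in the row-$6$ equation) produces the required positive multiple of $T_3$.

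For (\ref{eq4.6.9}) and (\ref{eq4.6.10}), since $T_4(M) \le K\epsg M$ deterministically on $G_M$, Lemma \ref{lem2.7.1} bounds $E[\max_{s\le T_4(M)} Z_k(s)^p; G_M]$ by a polynomial in $M$ times a factor involving $P(G_M)$, which is dominated by $Ce^{-\epsilon' M/2}$ and hence is $\le \epsd$ for $M$ large. The main obstacle is securing the deterministic upper bound $T_3(M) \le K\epsg M$ used in item (iii): the natural lower-bound estimate on $\sum_{k=2}^{5} Y_k(t)$ from (\ref{eq2.10.3}) involves $Y_1(t)$, so to close the circle one must either control $Y_1(T_3)$ from above (e.g.\ by arguing that $Z_1$ stays away from $0$ due to the push from reflections of coordinates $2,\ldots,5$) or extract a self-contained algebraic relation among $\sum_{k=2}^{5} Y_k(T_3)$, $Y_1(T_3)$ and $T_3$ by summing the inequalities $Z_k(T_3) \ge 0$ for $k=2,\ldots,5$, and then conclude by a case split on the ratio $Y_1(T_3)/T_3$.
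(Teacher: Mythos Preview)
Your item (iii) is the real gap, and you correctly flag it as ``the main obstacle''; unfortunately it is not a technicality but the heart of the matter. Under the hypotheses of the proposition there is \emph{no} exponential bound $P(T_3(M)>K\epsilon_2 M)\le C_1 e^{-\epsilon' M}$. Take $z_1=0$ and $z_k=\epsilon_2^{2}M$ for $k=2,\dots,5$ (allowed by (\ref{eq4.6.2})--(\ref{eq4.6.3})). Then the SRBM can shadow the divergent fluid direction $u=e_1$: coordinate~1 reflects, pushes coordinates $2,\dots,5$ uniformly, and $\sum_{k=2}^{5}Y_k$ stays near~$0$ until one of $Z_2,\dots,Z_5$ is captured by $Z_1$. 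That capture is governed by the Brownian pursuit problem with gap $x=\epsilon_2^{2}M$, whose tail (Proposition~\ref{prop3.7.1}) is only \emph{polynomial}; the expected time is of order $x^{2}=\epsilon_2^{4}M^{2}\gg K\epsilon_2 M$. The very phenomenon that makes the paper interesting---a divergent fluid path coexisting with positive recurrence---is what prevents you from truncating $T_3(M)$ at $K\epsilon_2 M$ with exponentially small error. Since (iii) also underlies your argument for (ii) (you need $T_3$ bounded to turn a uniform-in-$s$ bound on $Z_6$ into an exponential-in-$M$ probability), the scheme for (\ref{eq4.6.9})--(\ref{eq4.6.10}) collapses.

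The paper sidesteps this entirely by \emph{never} bounding $T_3(M)$ from above. It defines $G_M=\bigcup_{i=1}^{5}G_i$ where each $G_i$ (for $i\ge 2$) is an event of the form ``$Z_k(s)$ violates a linear-in-$s$ bound for some $s\in[\tfrac{1}{30}\epsilon_2 M,\,T_3(M)]$'', with the upper endpoint left free; the stopping time $T_4(M)$ is the first time any such violation occurs. The crucial observation is that the violation at time $s$ has probability $\le C e^{-\epsilon' s}$, exponential in $s$ rather than in $M$. One then slices $G_i=\bigcup_{j} G_i(j)$ by the integer interval $[j,j+1]$ in which the violation first occurs; on $G_i(j)$ one has $T_4(M)\le j+1$, so Lemma~\ref{lem2.7.1} with $t=j+1$ and $P(F)\le C e^{-\epsilon' j}$ gives $E[Z_k(T_4)^{p};G_i(j)]\le e^{-\epsilon' j/2}$, and summing over $j\ge \tfrac{1}{30}\epsilon_2 M$ yields (\ref{eq4.6.9})--(\ref{eq4.6.10}). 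No a~priori control of $T_3(M)$ is needed.

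A secondary remark: your route to (\ref{eq4.6.7}) via a case split on $Y_1(T_3)/T_3$ is also more circuitous than necessary. On $G_M^{c}$ one has $Y_6\equiv 0$ up to $T_3$, so (\ref{eq2.3.3}) gives $\sum_{k=1}^{6}Y_k(T_3)\ge T_3 - z_2 - B_2(T_3)$; substituting this and the defining identity $\sum_{k=2}^{5}Y_k(T_3)=\tfrac{1}{6}(\delta_1 T_3+\epsilon_2 M)$ directly into the row-$1$ equation of (\ref{eq1.2.1}) already yields $Z_1(T_3)\ge \tfrac{1}{6}\delta_1\delta_2 T_3 + (B_1-B_2)(T_3)$ up to a nonnegative term, with no case analysis.
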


We define stopping times $T_{3}^{\prime}(M)$ as follows.  For given $M>0$ and $z=(z_1,\ldots,z_6)$, set
\begin{equation}
\label{eq4.20.1}
T_{3}^{\prime}(M) = T_3(M) \wedge T_6 \wedge 5N_M(z)^{5/2\eta},
\end{equation}  
where
\begin{equation}
\label{eq4.20.2}
N_M(z) = \left(\max_{k=2,\ldots,5}z_{k}^{2}\right) \vee M
\end{equation}  
and $\eta = .000073$ as in Section 3.  Assuming random initial conditions 
that satisfy the analog of (\ref{eq4.2.4}) in 
Proposition \ref{prop4.2.1}, we give, in Proposition \ref{prop4.21.1}, bounds on $E[T_{3}^{\prime}(M)]$.
Moreover, the truncation event
\begin{equation}
\label{eq4.20.3}
A_M = \{\omega: T_{3}^{\prime}(M) = 5N_M(z)^{5/2\eta}\}
\end{equation}  
is small in the sense of (\ref{eq4.21.4}) and, under further initial conditions, is small as in
(\ref{eq4.21.4'}).  Propositions \ref{prop3.5.2} and \ref{prop3.7.1} are the key
ingredients in the proof. 
\begin{prop}
\label{prop4.21.1}
Suppose that $Z(0)$ satisfies 
\begin{equation}
\label{eq4.21.2}
E[Z_k(0)^2] \le \ssb M \qquad \text{for } k=2,\ldots,5,
\end{equation}  
and given $M \ge 4$ and $D_1$.  Then, for appropriate $\ssl$ not depending on $M$,
\begin{equation}
\label{eq4.21.3}
E[T_{3}^{\prime}(M)] \le \ssl M 
\end{equation}
and
\begin{equation}
\label{eq4.21.4}
E[Z_k(T_{3}^{\prime}(M))^2; \, A_M] \le \ssl /\sqrt{M}. 
\end{equation}
If, in addition, $Z_k(0) \le \ssb M$ for $k=1,6$, then
\begin{equation}
\label{eq4.21.4'}
E[Z_k(T_{3}^{\prime}(M)); \, A_M] \le \ssm /M
\end{equation}
for appropriate $\ssm$ not depending on $M$.
\end{prop}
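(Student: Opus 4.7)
The plan is to dominate $T_3'(M)$ pathwise by a stopping time of the form $T_2(x)$ from Section 3, and then invoke Propositions \ref{prop3.5.2} and \ref{prop3.7.1} together with Lemma \ref{lem2.7.1}. Given a deterministic realization of $Z(0)=z$, set $X=\max_{k=2,\ldots,5}z_k$ and $x=\sqrt{N_M(z)}=\sqrt{X^2\vee M}$. Since $M\ge 4$, we have $x\ge\sqrt{M}\ge 2$ and $z_k\le x$ for $k=2,\ldots,5$, so the hypotheses of Propositions \ref{prop3.5.2} and \ref{prop3.7.1} are satisfied conditionally on $z$.

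The first step is the pathwise comparison. The linear threshold $L_3(t)=\tfrac{1}{6}(\delta_1 t+\epsg M)$ defining $T_3(M)$ lies at or below the threshold $L_2(t)=\tfrac{1}{2}(t+x^2)$ defining $T_2(x)$: at $t=0$ we have $\epsg M/6\le x^2/2$ since $x^2\ge M$ and $\epsg\le 1/20$, and the slope $\delta_1/6$ is smaller than $1/2$ because $\delta_1\le 1$. Since $\sum_{k=2}^{5}Y_k(\cdot)$ is continuous and nondecreasing, $L_3\le L_2$ forces $T_3(M)\le T_2^{\mathrm{first}}(x)$, where $T_2^{\mathrm{first}}$ is the pre-truncation crossing time. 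The $T_6$ cap is common, and the truncation levels agree since $N_M(z)^{5/2\eta}=x^{5/\eta}$, so pathwise $T_3'(M)\le T_2(x)$. Moreover, on $A_M$ we have $T_3'(M)=5x^{5/\eta}$, which forces $T_2(x)=5x^{5/\eta}$; hence $A_M\subseteq\{T_2(x)=5x^{5/\eta}\}$, the event $A$ of Proposition \ref{prop3.7.1}.

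With this in hand, (\ref{eq4.21.3}) follows by conditioning on $z$, applying Proposition \ref{prop3.5.2} to obtain $E_z[T_3'(M)]\le E_z[T_2(x)]\le C_1 x^2\le C_1(X^2+M)$, and using (\ref{eq4.21.2}) to bound $E[X^2]\le 4\ssb M$. For (\ref{eq4.21.4}), Proposition \ref{prop3.7.1} gives $P_z(A_M)\le C_2 x^{-5/\eta-2}$; combining this with (\ref{eq2.7.2}) of Lemma \ref{lem2.7.1}, applied with $F=A_M$, $t=5x^{5/\eta}$ and $\sqrt{M'}=x$, yields
\[
E_z[Z_k(T_3'(M))^2;A_M]\le C_3 x^{-5/\eta-2}\bigl(x^{5/\eta}\log(e\,x^{5/\eta+2})+x^2\bigr)\le C_4\bigl(x^{-2}\log x+x^{-5/\eta}\bigr).
\]
Since $x\ge\sqrt{M}$ and $E[\log x]\le\tfrac{1}{2}\log E[x^2]=O(\log M)$ by Jensen, integrating over $z$ yields a bound of order $\log M/M$, which is dominated by $\ssl/\sqrt{M}$. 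For (\ref{eq4.21.4'}) with the additional hypothesis $z_k\le\ssb M$ for $k=1,6$, one applies (\ref{eq2.7.3}) with $M'=\ssb M$: the dominant contribution is $C_5 P_z(A_M)\cdot 5x^{5/\eta}\le C_6 x^{-2}\le C_6/M$, and the $k=2,\ldots,5$ case can be handled either by (\ref{eq2.7.3}) with $M'=X$ or by Cauchy--Schwarz from (\ref{eq4.21.4}) together with the very small bound on $P(A_M)$ produced by Proposition \ref{prop3.7.1}.

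The main obstacle is the bookkeeping: the truncation threshold in $T_3'(M)$ is tuned precisely so that $N_M(z)^{5/2\eta}$ agrees with $x^{5/\eta}$, and the admissible range $\epsg\le 1/20$ from Proposition \ref{prop4.6.1} underlies the pointwise inequality $L_3\le L_2$. Once the pathwise domination $T_3'(M)\le T_2(x)$ is in place, (\ref{eq4.21.3})--(\ref{eq4.21.4'}) follow from Propositions \ref{prop3.5.2}, \ref{prop3.7.1} and Lemma \ref{lem2.7.1} by routine estimation of the dominant terms, with the only subtlety being the integration of $\log x$ and $x^{-5/\eta}$ against the (conditional) law of $x$.
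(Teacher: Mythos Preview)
Your proof is correct and follows essentially the same route as the paper: establish the pathwise domination $T_3'(M)\le T_2(\sqrt{N_M(z)})$ and the inclusion $A_M\subseteq A$, then feed Propositions~\ref{prop3.5.2} and~\ref{prop3.7.1} into Lemma~\ref{lem2.7.1}. You supply more detail than the paper on the threshold comparison $L_3\le L_2$ (which the paper states without justification), and your final bookkeeping differs cosmetically: the paper bounds the conditional quantity in (\ref{eq4.21.4}) directly by $C/\sqrt{N_M(z)}\le C/\sqrt{M}$ using $\log y\le C\sqrt{y}$, whereas you separate $x^{-2}$ from $\log x$ and integrate the latter via Jensen, arriving at $O((\log M)/M)$---a slightly stronger bound obtained by a slightly longer path. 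Your remark on handling $k=2,\ldots,5$ for (\ref{eq4.21.4'}) is superfluous, since that inequality is only needed for $k=1,6$.
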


On the set $G_{M}^{c} \cap A_{M}^{c}$, we continue to follow the evolution of $Z(\cdot)$ after the 
elapsed time $M + T_3(M)$.  (Note that, on $G_{M}^{c} \cap A_{M}^{c}$, $T_3(M) = T_{3}^{\prime}(M)$.)  
We wish to show that, provided $Z_k(\cdot)$, $k=1,6$, are initially ``large"
but $Z_k(\cdot)$, $k=2,\ldots,5$, are initially ``small", then all coordinates will typically be small at an
appropriate random time.  This is done in Proposition \ref{prop4.8.1}.  The bounds (\ref{eq4.8.6}) 
and (\ref{eq4.8.7})  will allow us to 
demonstrate (\ref{eq3.14.2}) under the event $F_2$ in 
Proposition \ref{prop4.2.1}.

In order to state Proposition \ref{prop4.8.1}, we define
\begin{align}
\label{eq4.7.1}
& T_5(M_1) = \min \{t: Z_1(t) = \epsk M_1\} \wedge \ssh M_1, \\
\label{eq4.7.2}
& T_{5}^{\prime}(M_1) = T_5(M_1) + \tfrac{1}{2}\epsk M_1,
\end{align}
for given $M_1 > 0$, $\ssh$ and $\epsk > 0$.  Note that 
\begin{equation}
\label{eq4.7.3}
T_{5}^{\prime}(M_1) \le (\ssh + \tfrac{1}{2}\epsk)M_1 
\end{equation}  
always holds.  We employ the constants $\epsk$, $\epsp$, $\epsq$, $\epsr$ and $\ssh$, $\ssg$ in the proposition.
A specific value of $\epsk\in (0,\tfrac{1}{72}\delta_1\delta_2]$ will be assigned at the end of the section; 
there, we will also employ $\epsp = 31 \epsg$, $\epsq = \tfrac{1}{12}\delta_1\delta_2$,
$\epsr = \tfrac{1}{20}\delta_3 \epsk$ and $\ssg = 31\ssb/\epsg$; $\ssh$ is specified
in the proposition. 
\begin{prop}
\label{prop4.8.1}
Let $T_5(\cdot)$ and $T_{5}^{\prime}(\cdot)$ be as in (\ref{eq4.7.1}) and (\ref{eq4.7.2}) for given $\epsk >0$.
Suppose $Z(0)=z$ satisfies
\begin{align}
\label{eq4.8.2}
& z_k \le \epsp M_1 \qquad \text{for } k=2,\ldots,5, \\
\label{eq4.8.3}
& \epsq M_1 \le z_k \le \ssg M_1 \qquad \text{for } k=1,6, 
\end{align}
for given $M_1 > 0$, $\epsp > 0$, $\epsq \ge 6\epsk \vee 3\epsp$ and $\ssg > 0$.  Then, for given
$\epsr > 0$ and $\ssh = 10\ssg \delta_4/\delta_2\delta_3$,
\begin{align}
\label{eq4.8.4}
& P(T_5(M_1) = \ssh M_1) \le \ssi e^{-\epsilon^{\prime}M_1}, \\
\label{eq4.8.5}
& P(Z_k(T_5(M_1)) \ge \epsr M_1) \le \ssi e^{-\epsilon^{\prime}M_1} \qquad \text{for } k=2,\ldots,6,
\end{align}
for appropriate $\ssi$ and $\epsilon^{\prime} > 0$ not depending on $M_1$.  Moreover,
\begin{align}
\label{eq4.8.6}
& E[Z_k(T_{5}^{\prime}(M_1))] \le 6\epsk M_1 + \ssj \qquad \text{for } k=1,6, \\
\label{eq4.8.7}
& E[Z_k(T_{5}^{\prime}(M_1))^2] \le 24\cdot8 \epsk M_1 + \ssj \qquad \text{for } k=2,\ldots,5,
\end{align}
for appropriate $\ssj$ not depending on $M_1$.
\end{prop}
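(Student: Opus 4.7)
The central observation is that on $[0, T_5(M_1)]$ we have $Z_1(t) \ge \epsk M_1 > 0$, so $Y_1(t) \equiv 0$ throughout, which simplifies the bounds in Lemmas~\ref{lem2.10.2} and~\ref{lem2.12.1}. I will split $[0, T_5(M_1)]$ into a ``draining'' phase $[0, T_A]$ during which $Z_6$ decreases from at most $\ssg M_1$ down to order $\epsr M_1$, and a ``depletion'' phase $[T_A, T_5(M_1)]$ during which $Z_1$ is driven to $\epsk M_1$ via Lemma~\ref{lem2.12.1} applied with the freshly small $Z_6(T_A)$. The short interval $[T_5(M_1), T_5'(M_1)]$ is handled separately.

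The delicate step is the decay of $Z_6$, since the direct upper bound from (\ref{eq2.10.4}) with $k=6$ only shows $Z_6$ does not grow substantially. I will pivot to the pairwise identity obtained by subtracting (\ref{eq1.2.1}) for the $6$th and $k$th coordinates for $k \in \{2,\ldots,5\}$, which for $Y_1 \equiv 0$ reads
\[
Z_6(t) - Z_k(t) = (Z_6(0) - Z_k(0)) + (B_6 - B_k)(t) - \delta_3 Y_k(t) + \delta_3 Y_6(t);
\]
this uses $R_{6,k} - R_{k,k} = -\delta_3$, $R_{6,6} - R_{k,6} = \delta_3$, and $R_{6,j} = R_{k,j}$ for $j \in \{2,\ldots,5\} \setminus \{k\}$, together with the cancellation of the $j=1$ terms by $Y_1 = 0$. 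The coefficient $-\delta_3$ on $Y_k$ combined with the linear lower bound (\ref{eq2.10.3}) $Y_k(t) \ge t/5 - Z_k(0)/\delta_3$ injects an effective drift $-\delta_3/5$ on $Z_6 - Z_k$. Arguing by contradiction, if $Z_6 > 0$ throughout $[0, T_A]$ for $T_A$ a suitable multiple of $\ssg M_1/\delta_3$, then $Y_6 \equiv 0$ and the identity combined with Lemma~\ref{lem2.5.2} bounding $Z_k$ would force $Z_6(T_A) < 0$ off an exponentially small set. Hence $\tau := \sup\{s \le T_5(M_1): Z_6(s) = 0\}$ exists; restarting the identity at $\tau$ (where $Z_6(\tau) = 0$ and $Y_6(T_5(M_1)) - Y_6(\tau) = 0$ since $Z_6 > 0$ on $(\tau, T_5(M_1)]$) yields $Z_6(T_5(M_1)) \le Z_k(T_5(M_1)) + Z_k(\tau) + |(B_6 - B_k)(T_5(M_1)) - (B_6 - B_k)(\tau)| - \tfrac{\delta_3}{5}(T_5(M_1) - \tau)$, which with (\ref{eq2.1.4}) and the bound on $Z_k$ from (\ref{eq2.5.4}) with $N_k = 0$ and suitably small $\epsilon$ gives $Z_6(T_5(M_1)) \le \epsr M_1$ off an exponentially small set. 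The bounds (\ref{eq4.8.5}) for $k = 2,\ldots,5$ follow directly from the same application of (\ref{eq2.5.4}).

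For the time bound (\ref{eq4.8.4}), I track $Z_1$ in each phase. During Phase 1, since $Y_6 = 0$, summing (\ref{eq2.10.4}) gives $\sum_{k=2}^{5} Y_k \le (1+\epsilon) t/(1-\delta_3)$, so (\ref{eq1.2.1}) yields $Z_1(t) - Z_1(0) \le B_1(t) + \frac{\delta_2 + \delta_3 + \epsilon}{1 - \delta_3} t$; this produces $Z_1(T_A) = O(\ssg M_1)$, much tighter than the loose $O(\ssg M_1/\delta_3)$ from Lemma~\ref{lem2.12.1} alone. During Phase 2, Lemma~\ref{lem2.12.1} restarted at $T_A$ with $Z_6(T_A) \le \epsr M_1$ gives $Z_1(t) - Z_1(T_A) \le -(\delta_4/30)(t - T_A) + 3\epsk M_1/20$, so $Z_1$ reaches $\epsk M_1$ in additional time $O(\ssg M_1/\delta_4)$; the total is majorized by $\ssh M_1 = 10\ssg\delta_4 M_1/(\delta_2\delta_3)$ via (\ref{eq1.6.3}). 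Finally, (\ref{eq4.8.6}) and (\ref{eq4.8.7}) follow by applying (\ref{eq1.2.1}) on $[T_5(M_1), T_5'(M_1)]$ of length $\tfrac{1}{2}\epsk M_1$: for $k = 1, 6$, (\ref{eq2.9.2}) and the Reflection Principle give $E[Z_k(T_5'(M_1)) - Z_k(T_5(M_1))] \le 3\epsk M_1 + O(1)$; for $k = 2, \ldots, 5$, (\ref{eq2.6.1}) gives $E[(Z_k(T_5'(M_1)) - Z_k(T_5(M_1)))_+^2] \le 192\epsk M_1$. These combine with the initial values from (\ref{eq4.8.5}) and the exponentially small exceptional sets of earlier steps (absorbed via Lemma~\ref{lem2.7.1}) to give the stated bounds. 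The hardest piece is the $Z_6$ decay: the pivot to the $Z_6 - Z_k$ identity, combined with the linear lower bound on $Y_k$ and a restart at the last visit of $Z_6$ to $0$, is the nonobvious ingredient required beyond the toolkit of Sections 2 and 3.
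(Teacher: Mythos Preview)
Your overall architecture is reasonable, but there are two genuine gaps, both stemming from the same missing idea: to get bounds that are \emph{independent of the initial values} $z_k$, one must first show the relevant coordinate hits $0$ and then restart there.

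\textbf{Gap in (\ref{eq4.8.5}).} Applying (\ref{eq2.5.4}) from time $0$ gives only $Z_k(T_5(M_1)) \le z_k + \epsilon \ssh M_1 \le (\epsp + \epsilon \ssh)M_1$, and since $\epsr$ is an arbitrary given constant (in the application $\epsr = \tfrac{1}{20}\delta_3\epsk$, which is much smaller than $\epsp = 31\epsg$), this does not suffice. The paper first proves $\tau_k := \min\{t: Z_k(t)=0\} \le T_5(M_1)$ for every $k=2,\ldots,6$ and only then applies (\ref{eq2.5.4}) to the restarted process. For $k=6$, your contradiction argument shows $Z_6$ hits $0$ before some fixed $T_A$, but it does not rule out $T_5(M_1) < T_A$ with $Z_6(T_5(M_1))$ still large. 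The paper closes this with a \emph{lower} bound on $Z_1$ valid while $Y_6(t)=0$: from (\ref{eq1.2.1}) and (\ref{eq2.3.3}),
\[
Z_1(t) \ge \tfrac{1}{3}\epsq M_1 + \delta_2 t + \bigl(B_1(t) - (1+\delta_2)B_2(t)\bigr),
\]
which by (\ref{eq2.1.3}) and $\epsq \ge 6\epsk$ stays above $\epsk M_1$; hence $T_5(M_1)$ cannot occur before $\tau_6$. The paper then gets $\tau_k \le \tau_6$ for $k=2,\ldots,5$ from $Z_k(t) - Z_6(t) \le (B_k - B_6)(t) - \tfrac{2}{3}\epsq M_1$ on $\{Y_k(t)=0\}$.

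\textbf{Gap in (\ref{eq4.8.7}).} Combining $Z_k(T_5(M_1)) \le \epsr M_1$ with (\ref{eq2.6.1}) on $[T_5,T_5']$ yields $E[Z_k(T_5'(M_1))^2] \le 2\epsr^2 M_1^2 + 2\cdot 192\,\epsk M_1$, which is quadratic in $M_1$ and cannot match the linear bound $24\cdot 8\,\epsk M_1 + \ssj$. The paper instead shows, via (\ref{eq2.10.3}) on the restarted process (using $\tilde{Y}_1\equiv 0$ on $[0,\tfrac{1}{2}\epsk M_1]$ and $Z_k(T_5(M_1)) \le \epsr M_1$ with $\epsr < \tfrac{1}{10}\delta_3\epsk$), that each $Z_k$, $k=2,\ldots,5$, hits $0$ during $[T_5,T_5']$; restarting there and applying (\ref{eq2.6.1}) over the remaining time $\le \tfrac{1}{2}\epsk M_1$ gives exactly $24\cdot 8\,\epsk M_1$.

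\textbf{On (\ref{eq4.8.4}).} Your two-phase scheme can likely be made to work with enough bookkeeping, but it is more involved than necessary. The paper proceeds in one step: write $Z_1(t) = Z_1(0) + B_1(t) - t + (1+\delta_2)\sum_{k=2}^6 Y_k(t) - (\delta_2+\delta_4)Y_6(t)$, bound $Y_6$ below by (\ref{eq2.10.3}) and $\sum_{k=2}^6 Y_k$ above by (\ref{eq2.10.4}), and use (\ref{eq1.6.3}) to obtain $Z_1(t) < \tfrac{2\delta_4 \ssg}{\delta_3}M_1 - \tfrac{1}{5}\delta_2 t$ off an exponentially small set, which forces $Z_1$ to $0$ (hence to $\epsk M_1$) by time $\ssh M_1$ exactly.
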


\subsection*{Demonstration of Theorem \ref{thm1.7.2}}
It suffices to consider the SRBM $Z(\cdot)$ on the induced $Z$-path space.  
We will show that, for $z\in \mathbb{R}_{+}^{6}$ and an appropriate stopping time $\sigma(z)$, 
the assumption (\ref{eq3.14.2}) of Proposition \ref{prop3.14.1} is satisfied. The proposition will then
imply $Z(\cdot)$ is positive recurrent.  We abbreviate by setting $\sigma(z) = \sigma$ and dropping 
the subscript $z$ from $E_z[\cdot]$.  

We will express $\sigma$ in terms of a related stopping time $\sigma^{\prime}$, which we construct
piecemeal by using the sets appearing in the previous propositions. 
Assume that $\|z\| = M$.  Then $z_1 \le M$, $z_{k}^{2} \le M$, for $k=2,\ldots,5$, and $z_6 \le M$, and so the
assumptions of Proposition \ref{prop4.2.1} are satisfied.  It follows from the proposition that 
(\ref{eq4.2.2})--(\ref{eq4.2.4}) hold for given $M$ and (\ref{eq4.2.5})--(\ref{eq4.2.8}) hold for large enough $M$.  
Let $\hha$ denote the union of the set where $(F_1 \cup F_2)^c$ occurs and where either the event in (\ref{eq4.2.2}) or
the event in (\ref{eq4.2.3}) occurs.  On $\hha$, we set $\sigma^{\prime} = M$.  It follows from Lemma \ref{lem2.7.1},
(\ref{eq4.2.2}), (\ref{eq4.2.3}) and (\ref{eq4.2.5}) that, for large enough $M$,
\begin{equation}
\label{eq4.10.5}
E[\|Z(\sigma^{\prime})\|; \, \hha] \le 1.
\end{equation} 

Suppose next that the event $F_1 \cap \hha^c$ holds.  Then either (a) $Z_1(M) < (12\epsagg/\delta_3)M$ or
(b) $Z_1(M) \ge (12\epsagg/\delta_3)M$; denote the former of these events by $\hhb$ and the latter by $\hhc$.
Under $\hhb$, we set $\sigma^{\prime} = M$.  Then, on account of (\ref{eq4.2.6}) and (\ref{eq4.2.7}) of Proposition
\ref{prop4.2.1}, with $\ssd = 24\cdot16\cdot40/\delta_1\delta_3$,
\begin{equation}
\label{eq4.11.1}
\begin{split}
E[\|Z(\sigma^{\prime})\|; \, \hhb] & \le \left(\tfrac{12\epsagg}{\delta_3} + 4 \epsagg \ssd + \epsagg \right)M \\
& \le (97\cdot16\cdot40 \epsagg/\delta_1 \delta_3) M.
\end{split}
\end{equation}
When $\hhc$ occurs, we set $\sigma^{\prime} = M + U_2$, where $U_2$ is defined below (\ref{eq4.4.1}).
(Here and later on, stopping times such as $U_2$ refer to the restarted process.)  The
process restarted at time $M$ satisfies conditions (\ref{eq4.5.2}) and (\ref{eq4.5.2'}) of Proposition
\ref{prop4.5.1}.  It follows from (\ref{eq4.5.4}) and (\ref{eq4.5.5}) of the proposition that
\begin{equation}
\label{eq4.11.2}
E[\|Z(\sigma^{\prime})\|; \, \hhc] \le (140 + 96\cdot97)\frac{\epsagg}{\delta_3}M \le 97^2 \frac{\epsagg}{\delta_3}M.
\end{equation}

The bounds (\ref{eq4.10.5})--(\ref{eq4.11.2}) consider the behavior of $Z(\sigma)$ off of $F_2 \cap \hha^c$.
We now consider the behavior on $F_2 \cap \hha^c$, for which there are two cases.  Denote by $\hhd$ the subset
of $F_2 \cap \hha^c$ corresponding to the union of the events $G_M$ and $A_M$ for the restarted process, 
which appear in the proof of Proposition \ref{prop4.6.1} and in (\ref{eq4.20.3}).  Let 
$$\sigma^{\prime} \stackrel{\text{def}}{=} M + (T_4(M) \wedge 5N_{M}(Z(M))^{5/\eta}) \le M + T_{3}^{\prime}(M),$$ 
that is, $\sigma^{\prime}$ is the earlier of the times at which either the event $G_M$ or $A_M$
occurs.   The restarted process satisfies both (\ref{eq4.6.2})--(\ref{eq4.6.4}) of Proposition \ref{prop4.6.1} and
(\ref{eq4.21.2}) of Proposition \ref{prop4.21.1}.  It therefore follows from (\ref{eq4.6.9})--(\ref{eq4.6.10}),
with $\epsd = \tfrac{1}{10}$, and (\ref{eq4.21.4})--(\ref{eq4.21.4'}) that
\begin{equation}
\label{eq4.11.3}
E[\|Z(\sigma^{\prime})\|; \, \hhd] \le 1
\end{equation}   
for large enough $M$.

We also consider the behavior of $Z(\sigma^{\prime})$ on $\hhe \stackrel{\text{def}}{=} F_2 \cap \hha^{c} \cap \hhd^{c}$.
On account of (\ref{eq4.6.6})--(\ref{eq4.6.7'}) of Proposition \ref{prop4.6.1}, the conditions 
(\ref{eq4.8.2})--(\ref{eq4.8.3}) of Proposition \ref{prop4.8.1} are satisfied for the process restarted at time
$M + T_3(M) = M + T_{3}^{\prime}(M)$, for $M_1 = T_3(M)$ and $\ssg$, $\epsp$ and $\epsq$ 
as specified before Proposition \ref{prop4.8.1}. Also, $\epsq \ge 6\epsk \vee 3\epsp$ holds for
$\epsagg \le \delta_1\delta_2\delta_3/1200$ and $\epsk$ as specified before the proposition.  
Inequalities (\ref{eq4.8.6}) and (\ref{eq4.8.7}) therefore hold for 
$T_{5}^{\prime}(M_1)$ chosen as in (\ref{eq4.7.2}).  Setting $\sigma^{\prime} = M + T_{3}(M) + T_{5}^{\prime}(T_3(M))$, it
follows from these inequalities that
\begin{equation}
\label{eq4.12.2}
E[\|Z(\sigma^{\prime})\|; \, \hhe] \le 97\cdot8\epsk E[T_3(M); \, \hhe] + \ssj 
\le 97\cdot8 \epsk E[T_{3}^{\prime}(M)] + \ssj
\end{equation} 
for appropriate $\ssj$.  On account of (\ref{eq4.2.4}) of Proposition \ref{prop4.2.1}, 
one can apply Proposition \ref{prop4.21.1} to
$Z(\cdot)$ restarted at time $M$, which gives the upper bound in (\ref{eq4.21.3}) on $E[T_{3}^{\prime}(M)]$.
Applying this to (\ref{eq4.12.2}), one obtains 
\begin{equation}
\label{eq4.13.1}
E[\|Z(\sigma^{\prime})\|; \, \hhe] \le 98\cdot8 \epsk \ssl M
\end{equation}
for large enough $M$ and appropriate $\ssl$.

Adding the bounds in (\ref{eq4.10.5})--(\ref{eq4.13.1}) for $E[Z(\sigma^{\prime}); H_i]$, 
$i = 1,\ldots,5$, one obtains
\begin{equation*}
E[\|Z(\sigma^{\prime})\|] \le \eee(\epsagg + \epsk)M
\end{equation*}
for large enough $M$, with $\eee$ depending on $\delta_1$ and $\delta_3$. So far, we have not specified the values of
$\epsagg$ and $\epsk$; we now set 
$$\epsagg = \epsk = (1/4\eee)\wedge (\delta_1\delta_2\delta_3/1200).$$  
It follows that
\begin{equation}
\label{eq4.13.2}
E[\|Z(\sigma^{\prime})\|] \le \tfrac{1}{2}M
\end{equation}
for $\|z\| = M$ and $M\ge M_0$, for appropriate $M_0 \ge 1$.

We define $\sigma$ in terms of $\sigma^{\prime}$, by setting $\sigma = \sigma^{\prime}$ when $\|z\|=M$ for 
$M \ge M_0$, and $\sigma = M \vee 1$ for $M\le M_0$. 
When $\|z\| = M$ and $M \ge M_0$, this implies
\begin{equation}
\label{eq4.13.3}
E[\|Z(\sigma)\|] \le \tfrac{1}{2}M.
\end{equation}
On the other hand, by applying (\ref{eq2.7.3}) of Lemma \ref{lem2.7.1} to (\ref{eq4.2.2}),
it follows for all $M$ that
$$ E[Z_k(M\vee1)] \le \aaa (M\vee1) \qquad \text{for } k=1,6 $$
and appropriate $\aaa \ge \ssb \vee 1$.  Together with ($\ref{eq4.2.4}$), this implies
\begin{equation}
\label{eq4.14.1}
E[\|Z(M \vee 1)\|] \le 6 \aaa (M \vee 1)
\end{equation}
for all $M$.  Setting $\kappa = 12 \aaa(M_0 \vee 1)$, it follows from (\ref{eq4.13.3}) and (\ref{eq4.14.1}) that
\begin{equation}
\label{eq4.14.2}
E[\|Z(\sigma)\|] \le (\|z\| \vee \kappa) - \tfrac{1}{2}(M \vee 1)
\end{equation}
for $\|z\|=M$ and all $M$.

We also wish to show that, for $\|z\|=M$,
\begin{equation}
\label{eq4.14.3}
E[\sigma] \le \ccc (M \vee 1)
\end{equation}
for some $\ccc$.  This is a quick consequence of the definition of $\sigma$ on $\hha,\ldots,\hhe$ for $\|z\| \ge M_0$.  
On $\hha \cup \hhb$, $\sigma = M$; on $\hhc$, $\sigma \le \sse M$; on $\hhd$, $\sigma \le M + T_{3}^{\prime}(M)$;
and on $\hhe$, $\sigma = M + T_{3}^{\prime}(M) + T_{5}^{\prime}(T_{3}^{\prime}(M))$.  It therefore follows from
(\ref{eq4.21.3}) of Proposition \ref{prop4.21.1} and (\ref{eq4.7.3}) that
\begin{equation}
\label{eq4.15.1}
\begin{split}
E[\sigma] & \le M + \sse M + E[T_{3}^{\prime}(M)] + E[T_{5}^{\prime}(T_{3}^{\prime}(M))] \\
& \le (1 + \sse + \ssl + \ssl (\ssh + \tfrac{1}{2}\epsk))M \le \ccc M
\end{split}
\end{equation}  
for $\|z\| \ge M_0$ and appropriate $\ssl$ and $\ccc$.  Together with $\sigma = M \vee 1$ for $\|z\| < M_0$, this 
implies (\ref{eq4.14.3}).  

Combining (\ref{eq4.14.2}) and (\ref{eq4.14.3}), one obtains
\begin{equation}
\label{eq4.15.2}
E[\|Z(\sigma)\|] \le (\|z\| \vee \kappa) - (1/2\ccc) E[\sigma].
\end{equation}
This implies (\ref{eq3.14.2}) of Proposition \ref{prop3.14.1}, with $\epsilon = 1/2\ccc$.
Since $Z(\cdot)$ is Feller and $\sigma$ is defined in terms of hitting times of closed sets, one can
check that $E_z[\sigma(z)] = E[\sigma]$ is measurable in $z$.
By applying the proposition, (\ref{eq3.14.3}) follows and hence $Z(\cdot)$ is positive recurrent.  This 
demonstrates Theorem \ref{thm1.7.2}.  

\section{Demonstration of Propositions \ref{prop4.2.1} and \ref{prop4.5.1}}
Proposition \ref{prop4.2.1} constitutes the first step of the proof of Theorem \ref{thm1.7.2}.
It provides elementary upper bounds (\ref{eq4.2.2})--(\ref{eq4.2.4}) on $Z_k(M)$, $k=1,\ldots,6$, and on $E[Z_k(M)^2]$, 
$k=2,\ldots,5$, that are valid over all $M$.  It states that, off of the exceptional set in  (\ref{eq4.2.5}), either $Z_k(M)$
will be small for all $k=2,\ldots,6$ or $Z_6(M)$ will be large, 
in the sense of (\ref{eq4.2.6})--(\ref{eq4.2.8}).  This dichotomy depends on the
rate of growth of $Y_1(\cdot)$ as given by the set $\ffc$ in (\ref{eq5.1.1}) of the proof, although
the actual correspondence is a bit more complicated.  The proof of Proposition \ref{prop4.2.1} relies on the application of
lemmas from Section 2 to the equation (\ref{eq1.2.1}) of the SRBM.
\begin{proof} [Proof of Proposition \ref{prop4.2.1}]
Both inequalities in (\ref{eq4.2.2}) follow directly from (\ref{eq2.4.4}) of Lemma \ref{lem2.4.2}, with 
$\ssb \ge 9$.  Inequality (\ref{eq4.2.3}) follows from (\ref{eq2.5.4}) of Lemma \ref{lem2.5.2}, with a new
choice of $\aaa$.  For (\ref{eq4.2.4}), one can restrict the expectation to the set $\{Z_k(M) > 2\sqrt{M}\}$ and its
complement.  One then applies (\ref{eq2.6.1}) to the first part and a trivial bound to the second part to obtain
(\ref{eq4.2.4}), with $\ssb \ge 24 \cdot 16 \cdot 4 + 4$.

For the inequalities (\ref{eq4.2.5})--(\ref{eq4.2.8}), we first set 
\begin{equation}
\label{eq5.1.1}
\ffc = \{\omega: Y_1(M) - Y_1(\tau_k) > \epsc M \text{ for some } k=2,\ldots,5\}.
\end{equation}
Here, $\epsc \stackrel{\text{def}}{=} 2 \epsagg / \delta_1$ and $\tau_k$ is the last time before $M$ at which
$Z_k(t) = 0$ for any $t$; if the set is empty, let $\tau_k = 0$.  On $\ffc$, we denote by $K$ one of the indices
$k$ satisfying (\ref{eq5.1.1}).  

We consider the behavior on $\ffc$ and $\ffc^{c}$ separately, first considering the behavior on $\ffc$.  One has, by
applying (\ref{eq1.2.1}) to the $K^{\text{th}}$ and $6^{\text{th}}$ coordinates, 
\begin{equation}
\label{eq5.2.1}
\begin{split}
Z_6(M) - Z_K(M)  = (& Z_6(\tau_K) - Z_K(\tau_K)) \\ 
& + (B_6(M) - B_6(\tau_K)) - (B_K(M) - B_K(\tau_K)) \\
& + \delta_1 (Y_1(M) - Y_1(\tau_K)) + \delta_3(Y_6(M) - Y_6(\tau_K)).
\end{split}
\end{equation}
On $\ffc$, it follows from (\ref{eq2.1.3}) of Lemma \ref{lem2.1.1} that, except on a 
set $\ffd \in \mathcal{F}(M)$ of exponentially small probability in $M$,
\begin{equation}
\label{eq5.2.2}
Z_6(M) \ge \delta_1 \epsc M - \epsilon M \ge \tfrac{1}{2} \delta_1 \epsc M = \epsagg M
\end{equation}
for $\epsilon = \tfrac{1}{2}\delta_1 \epsc$ and large enough $M$.  This gives the inequality in (\ref{eq4.2.8}) on
the set $\ffc \cap \ffd^{c}$.

We now consider the behavior of $Z(\cdot)$ on $\ffc^{c}$.  Set $t_1 = (1 - 20\epsc/\delta_3)M$; since
$\epsagg \le \tfrac{1}{40}\delta_1\delta_3$, $t_1 \ge 0$ holds.   It follows from
(\ref{eq2.5.4}) of Lemma \ref{lem2.5.2} that, except on a set $\ffe \in \mathcal{F}(M)$ 
of exponentially small probability in $M$,
\begin{equation}
\label{eq5.2.3}
Z_k(t_1) \le Z_k(0) + \epsilon M \le \epsc M
\end{equation}
for $k=2,\ldots,5$, $\epsilon = \epsc/2$ and large enough $M$.  Restarting $Z(\cdot)$ at time $t_1$, it follows from
(\ref{eq2.10.3}) of Lemma \ref{lem2.10.2} and (\ref{eq5.2.3}) that, except on a set $\ffg \in \mathcal{F}(M)$ of
exponentially small probability,
\begin{equation}
\label{eq5.3.1}
Y_k(M) - Y_k(t_1) \ge 4\frac{\epsc}{\delta_3}M - \frac{\epsc}{\delta_3}M - \frac{2}{\delta_3}(Y_1(M) - Y_1(t_1)).
\end{equation}
On $\ffc^c$, when $\tau_k < t_1$, the last term on the right side of (\ref{eq5.3.1})  is at 
most $2\epsc M/\delta_3$, which implies
\begin{equation}
\label{eq5.3.1'}
 Y_k(M) - Y_k(t_1) > 0, 
\end{equation}
and hence $Z_k(\tau_{k}^{\prime}) = 0$ for some $\tau_{k}^{\prime} \in [t_1,M]$.  This contradicts the definition of
$\tau_k$, and so $\tau_k \ge t_1$.

Let $\tau_{k}^{\prime}$ be the smallest such time.  Since $\tau_{k}^{\prime}$ is a stopping time, we may restart
$Z(\cdot)$ at $\tau_{k}^{\prime}$.  Applying (\ref{eq2.6.1}) of Lemma \ref{lem2.5.2}, it follows that 
\begin{equation}
\label{eq5.3.2}
E[Z_k(M)^2; \ffc^{c} \cap \ffe^{c} \cap \ffg^{c}] \le 24 \cdot 16 \cdot 20 \frac{\epsc}{\delta_3}M = \epsagg \ssd M
\end{equation}
for $k = 2,\ldots,5$ and $\ssd = 24 \cdot 16 \cdot 40 / \delta_1 \delta_3$.

We now conclude the demonstration of (\ref{eq4.2.5})--(\ref{eq4.2.8}).  Denoting the set on which the inequality
in (\ref{eq4.2.8}) holds by $F_2$, one has by (\ref{eq5.2.2}) that $F_2 \supseteq \ffc \cap \ffd^{c}$.  Setting
$F_1 = F_{2}^{c} \cap \ffc^{c} \cap \ffe^{c} \cap \ffg^{c}$, then (\ref{eq4.2.6}) is automatically satisfied and (\ref{eq4.2.7})
holds because of (\ref{eq5.3.2}).  Since $(F_1 \cup F_2)^c \subseteq  \ffd \cup \ffe \cup \ffg$, 
(\ref{eq4.2.5}) follows, for appropriate $\epsilon^{\prime} > 0$, from the upper bounds on the probabilities of
$\ffd$, $\ffe$ and $\ffg$.  It follows from the definition of $F_2$ that $F_2 \in \mathcal{F}(M)$; since 
$F_i \in \mathcal{F}(M), i = 2,\ldots,6$, one also has $F_1 \in \mathcal{F}(M)$. 
\end{proof}

Proposition \ref{prop4.5.1} states that, if $z_k$, $k=2,\ldots,6$, are all small and $z_1$ is bounded below, but is not too
large, then $Z_k(U_2)$, $k=1,\ldots,6$, are all small in the sense of (\ref{eq4.5.4}) and (\ref{eq4.5.5}).  The proof 
considers the behavior of $Z(t)$ over $[U_1,U_2]$.  The stopping time $U_1$ was defined so that $Z_1(U_1)$ is relatively
small, but large enough so that, over $[0,U_2]$ with $U_2=U_1 + \epsb M$, $Z_1(t) > 0$ holds.  The interval $[U_1,U_2]$
is both large enough to obtain the desired behavior of $Z_k(U_2)$, $k=2,\ldots,5$, in (\ref{eq4.5.5}) and short enough so
(\ref{eq4.5.4}) holds for $Z_k(U_2)$, $k=1,6$.  As with Proposition \ref{prop4.2.1}, the proof applies the lemmas of
Section 2 to (\ref{eq1.2.1}).

\begin{proof}[Proof of Proposition \ref{prop4.5.1}]
We first show (\ref{eq4.5.4}) for $k=1$. 
It follows from Lemma \ref{lem2.12.1}, (\ref{eq4.5.2}) and (\ref{eq4.5.2'}) that, on the set where $Z_1(t) > 0$
for $t\in [0, \tfrac{1}{2}\sse M]$,
\begin{equation}
\label{eq5.5.1}
Z_1(\tfrac{1}{2} \sse M) \le \ssb M + \tfrac{3 \epsagg}{\delta_3}M - \tfrac{1}{60}\delta_4 \sse M
\end{equation}
except for a set $\ffh$ of exponentially small probability in $M$.  Since the right side of (\ref{eq5.5.1}) is
negative for $\sse$ satisfying  (\ref{eq4.4.1}) and $Z_1(0)\ge \tfrac{12\epsagg}{\delta_3}M$, 
$Z_1(t) = \tfrac{12\epsagg}{\delta_3}M$ must occur at some $t\le \tfrac{1}{2}\sse M$;   
%
%
%
hence $Z_1(U_1) = \tfrac{12\epsagg}{\delta_3}M$ on $\ffh^c$.  
By (\ref{eq2.4.4}) of Lemma \ref{lem2.4.2} and (\ref{eq4.4.1}),
this in turn implies that $Z_1(U_2) \le \tfrac{60\epsagg}{\delta_3}M$ off of an additional set of exponentially
small probability.  Together with (\ref{eq2.7.3}) of Lemma \ref{lem2.7.1}, this implies (\ref{eq4.5.4}) for
$k=1$ and large $M$.

Restarting $Z(\cdot)$ at $U_1$, it follows from (\ref{eq1.2.1}) and (\ref{eq4.4.1}) that, except on a set $\ffi$ of
exponentially small probability in $M$,
\begin{equation*}
Z_1(U_1+s)) \ge \tfrac{12\epsagg}{\delta_3}M + B_1(s) -s > 0
\end{equation*}
for $s\le \epsb M$.  Consequently, on $\ffi^c$, 
\begin{equation}
\label{eq5.6.2}
Z_1(t) > 0 \qquad \text{for } t\le U_2.
\end{equation}
Since $Z_6(0) \le \epsagg M$, one can therefore employ (\ref{eq2.5.4}) of Lemma \ref{lem2.5.2}, with small enough
$\epsilon >0$, together with (\ref{eq2.7.3}) of Lemma \ref{lem2.7.1}, to obtain (\ref{eq4.5.4}) for $k=6$.

We still need to show (\ref{eq4.5.5}).  For this, one can employ the conditions (\ref{eq4.4.1}), (\ref{eq4.5.2'}) and
(\ref{eq5.6.2}) and argue similarly to (\ref{eq5.2.3}) through (\ref{eq5.3.1'}), in the proof of Proposition
\ref{prop4.2.1}, to conclude that, for $k=2,\ldots,5$,
$$ Z_k(\tau_{k}^{\prime}) = 0 \quad \text{ for some } \tau_{k}^{\prime} \in [U_1,U_2],$$
off of a set $\ffj$ of exponentially small probability in $M$.  Letting $\tau_{k}^{\prime}$ denote the first such
time, we restart $Z(\cdot)$ at $\tau_{k}^{\prime}$.  Applying (\ref{eq2.6.1}) and (\ref{eq2.7.2}), it follows that
\begin{equation*}
\begin{split}
E[Z_k(U_2)^2] & \le E[Z_{k}(U_2)^2; \,  \ffj^c] + E[Z_k(U_2)^2; \,  \ffj] \\
& \le (24\cdot 16 +1)\epsb M \le (24\cdot 97 \epsagg /\delta_3)M
\end{split}
\end{equation*}
for large enough $M$.  This implies (\ref{eq4.5.5}).
\end{proof}

\section{Demonstration of Propositions \ref{prop4.6.1}, \ref{prop4.21.1} and \ref{prop4.8.1}}
The proofs of Propositions \ref{prop4.6.1}, \ref{prop4.21.1} and \ref{prop4.8.1} rely on the application
of the lemmas in Section 2 to the equation (\ref{eq1.2.1}) of the SRBM $Z(\cdot)$.  Proposition \ref{prop4.21.1} also
relies on Propositions \ref{prop3.5.2} and \ref{prop3.7.1}.  The reasoning behind the proofs follows in spirit the
sketch given near the end of Section 1 and in Section 4.

We first demonstrate Proposition \ref{prop4.6.1}.  The proposition states that, off of the exceptional set $G_M$ defined
in the proof, the inequalities (\ref{eq4.6.5})--(\ref{eq4.6.7'}) all hold.  In particular, $Z_k(T_3(M))$, $k=2,\ldots,5$,
will be small and $Z_k(T_3(M))$, $k=1,6$, will be bounded below, but not too large.  These inequalities, except for
(\ref{eq4.6.7}), will follow from their analogs (\ref{eq6.1.1})--(\ref{eq6.1.3}) that hold over 
$[\tfrac{1}{30}\epsg M, T_3(M)]$ and $[0,T_3(M)]$.  The exceptional set $G_M$ will be shown to be 
small in the sense of (\ref{eq4.6.9}) and (\ref{eq4.6.10}).  

The lower bounds on $Z_k(T_3(M))$, $k=1,6$, constitute the more delicate part of the argument and depend on the 
condition $z_6\ge \epsg M$ in (\ref{eq4.6.4}).  Arguing as in (\ref{eq6.4.1})--(\ref{eq6.4.3'}), we will show that
the growth of $Y_1(\cdot)$ causes $Z_6(\cdot)$ to increase linearly.  
On the other hand, as shown below (\ref{eq6.1.6}), the
growth of $Y_k(\cdot)$, $k=2,\ldots,5$, together with $Y_6(T_3(M)) = 0$, causes $Z_1(\cdot)$ to eventually increase
linearly.  The stopping time $T_3(M)$ has been chosen so that both features are present.   
\begin{proof} [Proof of Proposition \ref{prop4.6.1}]
We first specify the set $G_M$ used in the definition of $T_4(M)$.  We abbreviate by
setting $M^{\prime} = \tfrac{1}{30}\epsg M$.  Writing 
$G_M = \cup_{i=1}^{5}G_i$,  the sets $G_i$ are defined as follows:  
\begin{align}
\label{eq6.1.1}
& \gga = \left\{\omega: \sum_{k=2}^{5}Y_k(M^{\prime}) \ge 5M^{\prime}\right\}, \\
\label{eq6.1.2}
& \ggb = \{\omega: Z_k(s) \ge \tfrac{31}{\epsg}\ssb s \text{ for some } s\in [M^{\prime}, T_3(M)],\, k=1,6 \}, \\
\label{eq6.1.4}
& \ggc = \{\omega: Z_k(s) \ge 31 \epsg s \text{ for some } s\in [M^{\prime}, T_3(M)],\, k=2,\ldots,5 \}, \\
\label{eq6.1.3}
& \ggd = \{\omega: Z_6(s) \le \tfrac{1}{2}\delta_1 s \text{ for some } s\in [0, T_3(M)] \}, \\
\label{eq6.1.5}
& \gge = \{\omega: B_2(s) - B_1(s) \ge \tfrac{1}{12}\delta_1 \delta_2 s \text{ for some } s\in [M^{\prime}, T_3(M)] \}.
\end{align}

Inequality (\ref{eq4.6.5}) follows from (\ref{eq6.1.1}) and the definition of $T_3(M)$.  Inequalities (\ref{eq4.6.6}) and 
(\ref{eq4.6.8}) follow by setting $s=T_3(M)$ in (\ref{eq6.1.2}) and (\ref{eq6.1.4}); both (\ref{eq4.6.5'}) 
and (\ref{eq4.6.7'}) follow from (\ref{eq6.1.3}).
The demonstration of (\ref{eq4.6.7}) requires a little work.  First note that,
on $\ggd^{c}$, (\ref{eq1.2.1}), (\ref{eq2.3.3}) of Lemma \ref{lem2.3.2} and (\ref{eq4.6.3}) imply that
\begin{equation}
\label{eq6.1.6}
\begin{split}
Z_1(T_3(M)) & \ge B_1(T_3(M)) - T_3(M) + \sum_{k=1}^{6} Y_k(T_3(M)) + \delta_2 \sum_{k=2}^{5}Y_k(T_3(M)) \\
& \ge \tfrac{1}{6}\delta_1 \delta_2 T_3(M) + (\tfrac{1}{6}\delta_2 \epsg - \epsg^2)M + B_1(T_3(M)) -B_2(T_3(M)) \\
& \ge \tfrac{1}{6}\delta_1 \delta_2 T_3(M) + B_1(T_3(M)) -B_2(T_3(M)),
\end{split}
\end{equation}
which, on $\gge^{c}$, is at least $\tfrac{1}{12}\delta_1 \delta_2 T_3(M)$.  So, 
\begin{equation*}
Z_1(T_3(M)) \ge \tfrac{1}{12}\delta_1 \delta_2 T_3(M) \qquad \text{on } \ggd^c \cap \gge^c,
\end{equation*}
which demonstrates (\ref{eq4.6.7}).

We need to show (\ref{eq4.6.9}) and (\ref{eq4.6.10}).  For this, we define $V_i$, $i=2,3,4,5$, to be the first time
at which the event in $G_i$ occurs, with 
$$ V_1 = \inf\left\{s: \sum_{k=2}^{5}Y_k(s) \ge 5M^{\prime} \right\} $$
%
if $\gga$ occurs; off of these sets, define $V_i = T_3(M)$ for
$i=1,\ldots,5$.  We complete our definition of $T_4(M)$ in (\ref{eq4.5.6}) by setting
\begin{equation}
\label{eq6.2.1}
T_4(M) = V_1 \wedge V_2 \wedge V_3 \wedge V_4 \wedge V_5.
\end{equation}
Note that $V_4 \le T_6$.
It follows from this and (\ref{eq6.2.1}) that $T_4(M) \le T_3(M) \wedge T_6$; moreover, $T_4(M)$ is a stopping time.

We note that, by (\ref{eq2.3.5}) of Lemma \ref{lem2.3.2},
\begin{equation}
\label{eq6.2.2}
P(\gga) \le \ssc e^{-\epsilon^{\prime}M}
\end{equation}
for appropriate $\ssc$ and $\epsilon^{\prime} > 0$.   Using (\ref{eq2.7.2}) and (\ref{eq2.7.3}) of Lemma \ref{lem2.7.1},
it therefore follows that, for given $\epsf > 0$ and large enough $M$,
\begin{align}
\label{eq6.2.3}
& E[Z_k(T_4(M)); \, \gga] \le \epsf \qquad \text{for } k=1,6, \\
\label{eq6.2.4}
& E[Z_k(T_4(M))^2; \, \gga] \le \epsf \qquad \text{for } k=2,\ldots,5.
\end{align}

We require more detailed estimates for $\ggb,\ldots,\gge$.
For each $i=2,\ldots,5$ and $j=1,2,\ldots$, we denote by $G_i(j)$ the event for which $G_i$ first occurs on $[j,j+1]$.  We
first consider the behavior on $\ggc$.  We recall that, by (\ref{eq2.5.4}) of Lemma \ref{lem2.5.2}, for 
$k=2,\ldots,5$ and given $\epsilon > 0$, 
\begin{equation}
\label{eq6.3.1}
P(Z_k(s) - Z_k(0) \ge \epsilon j \text{ for some } s\le j) \le \aaa e^{-\epsilon^{\prime}j}
\end{equation}
for each $j=1,2,\ldots$, and appropriate $\aaa$ and $\epsilon^{\prime} > 0$.  On account of (\ref{eq4.6.3}),
it follows for small enough $\epsilon$ that
\begin{equation}
\label{eq6.3.2}
P(Z_k(s) \ge 31\epsg s \text{ for some } s\in [j-1,j]) \le \aaa e^{-\epsilon^{\prime}j}
\end{equation}
for $j\ge M^{\prime}$.  It therefore follows from (\ref{eq2.7.2}) and (\ref{eq2.7.3}) that
\begin{align}
\label{eq6.3.3}
& E[Z_k(V_3); \, G_3(j)] \le e^{-\frac{1}{2}\epsilon^{\prime}j} \qquad \text{for } k=1,6, \\
\label{eq6.3.4}
& E[Z_k(V_3)^2; \, G_3(j)] \le e^{-\frac{1}{2}\epsilon^{\prime}j} \qquad \text{for } k=2,\ldots,5, 
\end{align}
for $j \ge M^{\prime}$ and large enough $M$.  Summing over $j$ gives
\begin{align}
\label{eq6.3.5}
& E[Z_k(V_i); \, G_i] \le \epsf \qquad \text{for } k=1,6, \\
\label{eq6.3.6}
& E[Z_k(V_i)^2; \, G_i] \le \epsf \qquad \text{for } k=2,\ldots,5,
\end{align}
for $i=3$, given $\epsf > 0$ and large enough $M$.

The inequalities (\ref{eq6.3.5}) and (\ref{eq6.3.6}) hold for $i=2$ and $i=5$ for the same reasons, except that one
applies (\ref{eq2.4.4}) of Lemma \ref{lem2.4.2} in place of (\ref{eq2.5.4}) and (\ref{eq4.6.2}) in place of
(\ref{eq4.6.3}) for $i=2$, and one applies (\ref{eq2.1.3}) of
Lemma \ref{lem2.1.1} for $i=5$.  The inequalities (\ref{eq6.3.5}) and (\ref{eq6.3.6}) also hold for $i=4$, although this
requires more work; we now do this.  

We note that, by (\ref{eq2.3.3}) of Lemma \ref{lem2.3.2}, (\ref{eq4.5.6}) and (\ref{eq4.6.3}),
\begin{equation}
\label{eq6.4.1}
Y_1(s) \ge s -  (\epsg^2 M + B_2(s)) - \tfrac{1}{6}(\delta_1 s + \epsg M)
\end{equation}
for $s\le T_3(M) \wedge T_6$.  Together with (\ref{eq1.2.1}), (\ref{eq4.6.4})
and $\epsg \le \tfrac{1}{20}$, this implies
\begin{equation}
\label{eq6.4.2}
\begin{split}
Z_6(s) & \ge \epsg M + B_6(s) - s + (1+\delta_1)Y_1(s) \\
& \ge \tfrac{3}{4}(\delta_1 s +\epsg M) + (B_6(s) - (1+\delta_1)B_2(s)).
\end{split}
\end{equation}
It follows from (\ref{eq2.1.2'}) of Lemma \ref{lem2.1.1} that
\begin{equation}
\label{eq6.4.3}
P(Z_6(s) \le \tfrac{1}{2}\delta_1 s \text{ for some } s\in [j-1,j]) \le \aaa e^{-\epsilon^{\prime}j}
\end{equation}
for $j\ge M^{\prime}$, and appropriate $\aaa$ and $\epsilon^{\prime} > 0$.  Also, by (\ref{eq2.1.3}),
\begin{equation}
\label{eq6.4.3'}
P(Z_6(s) \le \tfrac{1}{2}\delta_1 s \text{ for some } s\le M^{\prime}) 
\le \aaa e^{-\epsilon^{\prime}M}
\end{equation}
for appropriate $\aaa$ and $\epsilon^{\prime} > 0$.
Proceeding similarly to (\ref{eq6.3.2})--(\ref{eq6.3.6}), the inequalities
(\ref{eq6.3.5}) and (\ref{eq6.3.6}) with $i=4$ also hold.  

On account of (\ref{eq6.2.3})--(\ref{eq6.2.4}) and (\ref{eq6.3.5})--(\ref{eq6.3.6}), for $i=2,\ldots,5$, it follows
for large enough $M$ that
\begin{align}
\label{eq6.5.1}
& E[Z_k(T_4(M)); \, G_M] \le 5\epsf \qquad \text{for } k=1,6, \\
\label{eq6.5.2}
& E[Z_k(T_4(M))^2; \, G_M] \le 5\epsf \qquad \text{for } k=2,\ldots,5,
\end{align}
where $\epsf$ is as in (\ref{eq6.2.3})--(\ref{eq6.2.4}).  This implies (\ref{eq4.6.9}) and (\ref{eq4.6.10}) for
$\epsd = 5\epsf$, and completes the proof of the proposition.    

\end{proof}

The demonstration of Proposition \ref{prop4.21.1} is based on the comparison between $T_{3}^{\prime}(M,z)$ and
$T_2\left(\sqrt{N_M(z)}\right)$ in (\ref{eq6.20.1}).  This enables one to employ the upper bounds on
$E[T_2(x)]$ and $P(A)$ from Propositions \ref{prop3.5.2} and \ref{prop3.7.1}.
\begin{proof}[Proof of Proposition \ref{prop4.21.1}]
Let $T_{3}^{\prime}(M,z)$ and $A_M(z)$ denote the analogs of $T_{3}^{\prime}(M)$ and $A_M$, 
with $Z(0)=z$ being specified.
Comparing $T_{3}^{\prime}(M,z)$  with $T_2(x)$ in (\ref{eq3.5.1}), for $x= \sqrt{N_M(z)}$, it is easy to
see that
\begin{equation}
\label{eq6.20.1}
T_{3}^{\prime}(M,z) \le T_2\left(\sqrt{N_M(z)}\right).
\end{equation}
It therefore follows from Proposition \ref{prop3.5.2} that, for appropriate $\aaa$,
\begin{equation}
\label{eq6.20.2}
E[T_{3}^{\prime}(M,z)] \le E\left[T_2\left(\sqrt{N_M(z)}\right)\right] \le \aaa N_M(z). 
\end{equation}
Integrating (\ref{eq6.20.2}) over $z$ and applying (\ref{eq4.21.2}), one obtains
\begin{equation}
\label{eq6.20.3}
\begin{split}
E[T_{3}^{\prime}(M)] & = E[E[T_{3}^{\prime}(M) | \, Z(0) = z]] \le \aaa E[N_M(Z(0))] \\
& \le \aaa \left(E\left[\max_{k=2,\ldots,5}Z_k(0)^2\right] + M\right) \le \aaa (5\ssb +1)M.
\end{split}
\end{equation}
This implies (\ref{eq4.21.3}) with $\ssl = \aaa (5 \ssb +1)$.

Since the truncated values $T_6 \wedge 5N_M(z)^{5/\eta}$ and $T_6 \wedge 5x^{5/\eta}$ in
(\ref{eq4.20.1}) and (\ref{eq3.5.1}) are equal, it is easy to check that 
\begin{equation}
\label{eq6.21.1}
A_M(z) \subseteq A
\end{equation}
for given $z$, where $A$ is the event in (\ref{eq3.20.1}) with $x = \sqrt{N_M(z)}$.  It therefore follows
from Proposition \ref{prop3.7.1} that, for appropriate $\aaa$,
\begin{equation}
\label{eq6.21.2}
P(A_M(z)) \le P(A) \le \aaa N_M(z)^{-\frac{5}{2\eta} - 1}.
\end{equation}
%
%
%
%
Together with (\ref{eq2.7.2}), (\ref{eq4.20.3}) and (\ref{eq6.21.2}) imply that, for given $z$,
\begin{equation}
\label{eq6.21.3}
E[Z_k(T_{3}^{\prime}(M))^2; \, A_M(Z(0))| \, Z(0)=z] \le \bbb / \sqrt{N_M(z)} \le \bbb /\sqrt{M}
\end{equation}
for $k=2,\ldots,5$ and appropriate $\bbb$.  Similiarly, by 
(\ref{eq2.7.3}), (\ref{eq4.20.3}) and (\ref{eq6.21.2}),
\begin{equation}
\label{eq6.21.3'}
E[Z_k(T_{3}^{\prime}(M)); \, A_M(Z(0))| \, Z(0)=z] \le \ccc / N_M(z) \le \ccc /M
\end{equation}
for $k=1,6$ and appropriate $\ccc$.  Integrating (\ref{eq6.21.3}) and (\ref{eq6.21.3'}) 
over $z$ produces (\ref{eq4.21.4}) and (\ref{eq4.21.4'}).  
\end{proof}
We now demonstrate Proposition \ref{prop4.8.1}.  We first show (\ref{eq4.8.4}) and (\ref{eq4.8.5}),
which are then used to show (\ref{eq4.8.6}) and (\ref{eq4.8.7}).  On account of the upper bounds on 
$z_1$ and $z_6$ in (\ref{eq4.8.3}), $Z_1(\cdot)$ will drift toward $0$ so that $Z_1(t) = \epsk M_1$ 
typically occurs before time $\ssh M_1$.  This will imply (\ref{eq4.8.4}).  Since $Y_1(T_5(M_1)) = 0$,
the coordinates $k=2,\ldots,6$ drift toward $0$  over $[0,T_5(M_1)]$, implying (\ref{eq4.8.5}).
The elapsed time between $T_5(M_1)$ and $T_{5}^{\prime}(M_1)$ is short enough so $Z_k(T_{5}^{\prime}(M_1))$,
$k=1,6$, will typically still be small, and so (\ref{eq4.8.6}) will hold.  It is also short enough so
$Y_1(T_{5}^{\prime}(M_1)) = 0$ and long enough for $Z_k(t) = 0$, $k=2,\ldots,5$, to typically occur, from which 
(\ref{eq4.8.7}) will follow. 
\begin{proof}[Proof of Proposition \ref{prop4.8.1}]
We first demonstrate (\ref{eq4.8.4}).  
To do so, we analyze $Z(t)$ when $Y_1(t)=0$, for given $t\ge 0$.  
By (\ref{eq1.2.1}),
\begin{equation}
\label{eq6.6.1}
Z_1(t) = Z_1(0) + B_1(t) - t + (1 + \delta_2)\sum_{k=1}^{6}Y_k(t) - (\delta_2 + \delta_4)Y_6(t).
\end{equation}
Since $Y_1(t)=0$, it follows from (\ref{eq2.10.3}) of 
Lemma \ref{lem2.10.2} and (\ref{eq4.8.3}) that
\begin{equation}
\label{eq6.6.3}
Y_6(t) \ge \tfrac{1}{5}t - \tfrac{\ssg}{\delta_3}M_1
\end{equation} 
off of a set of exponentially small probability in $M_1$.  Together,  (\ref{eq1.6.3}), (\ref{eq4.8.3}), 
(\ref{eq6.6.1}) and (\ref{eq6.6.3}) imply that
\begin{equation}
\label{eq6.6.4}
Z_1(t) \le \frac{2 \delta_4 \ssg}{\delta_3}M_1 + B_1(t) - \left(1 + \frac{\delta_2 + \delta_4}{5}\right)t 
+ (1 + \delta_2)\sum_{k=2}^{6}Y_k(t).
\end{equation}

Now, by (\ref{eq2.10.4}) of Lemma \ref{lem2.10.2}, one has that, for given $\epsilon > 0$, 
\begin{equation}
\label{eq6.7.1}
\begin{split}
 - \left(1 + \frac{\delta_2 +  \delta_4}{5}\right)t + (1 & + \delta_2)\sum_{k=2}^{6}Y_k(t) \\
& \le \left[\frac{(1+\epsilon)(1+\delta_2)}{1-\delta_3} - 1 - \frac{\delta_2 + \delta_4}{5}\right]t
\end{split}
\end{equation}
off of a set of exponentially small probability in $M_1$.  One can check that, because of (\ref{eq1.6.3})
and $\delta_3 \le \tfrac{1}{10}$, the right side of (\ref{eq6.7.1}) is less 
than $-(\tfrac{1}{5}\delta_2 + \epsilon)t$
for $\epsilon$ chosen small enough.  Combining (\ref{eq6.6.4}), (\ref{eq6.7.1}) and applying (\ref{eq2.1.2'}) of
Lemma \ref{lem2.1.1}, one therefore obtains
\begin{equation}
\label{eq6.7.2}
Z_1(t) < \frac{2\delta_4 \ssg}{\delta_3}M_1 - \frac{1}{5} \delta_2 t%
\end{equation}
off of a set of exponentially small probability in $M_1$, provided $Y_1(t) = 0$.  But, since $Z_1(t) \ge 0$, it
follows from (\ref{eq6.7.2}) that, off this set, $Y_1(\ssh M_1) > 0$ for 
$\ssh = (10\ssg \delta_4/\delta_2\delta_3)M_1$.  So, $Z_1(t) = 0$ for some $t\le \ssh M_1$, which implies
(\ref{eq4.8.4})

Set $\tau_k = \min\{t: Z_k(t) = 0\}$ for $k=2,\ldots,6$.  In order to demonstrate (\ref{eq4.8.5}), we 
show that 
\begin{equation}
\label{eq6.8.1}
\tau_k \le T_5(M_1) \qquad \text{for } k=2,\ldots,6,
\end{equation}
off of a set of exponentially small probability in $M_1$.  Inequality (\ref{eq4.8.5}) then follows from
(\ref{eq4.8.4}) and (\ref{eq2.5.4}) of Lemma \ref{lem2.5.2} for a small enough choice of $\epsilon > 0$.

We claim that, off of a set of exponentially small probability in $M_1$, 
\begin{equation}
\label{eq6.8.1'}
\tau_6 < T_5(M_1).
\end{equation}
To see this, note that, when $Y_6(t)=0$, it follows from (\ref{eq6.6.1}), (\ref{eq2.3.3}) of Lemma \ref{lem2.3.2},
(\ref{eq4.8.2})--(\ref{eq4.8.3}) and $\epsq \ge 3\epsp$ that
\begin{equation}
\label{eq6.6.2}
Z_1(t) \ge \tfrac{1}{3} \epsq M_1 +\delta_2 t + (B_1(t) - (1+\delta_2)B_2(t)).
\end{equation}
Since $\epsq \ge 6\epsk$, on account of (\ref{eq2.1.3}) of Lemma \ref{lem2.1.1}, this is greater than 
$\epsk M_1$ off of a set of exponentially small probability in $M_1$.  Together with (\ref{eq4.8.4}), this implies the claim.

Also note that, for each $k=2,\ldots,5$, it follows from (\ref{eq1.2.1}) and (\ref{eq4.8.2})--(\ref{eq4.8.3})
that
$$ Z_k(t) - Z_6(t) \le B_k(t) - B_6(t) - \tfrac{2}{3}\epsq M_1 $$
on $Y_k(t) = 0$.  Together with (\ref{eq2.1.2'}) of Lemma \ref{lem2.1.1}, this implies
\begin{equation}
\label{eq6.8.2}
\tau_6 \ge \tau_k \wedge \ssh M_1 \ge \tau_k \wedge T_5(M_1) \qquad \text{for } k=2,\ldots,5,
\end{equation}
off of a set of exponentially small probability in $M_1$.  
Together with (\ref{eq6.8.1'}), this implies (\ref{eq6.8.1}).

In order to demonstrate (\ref{eq4.8.6}) for $k=1$ and $k=6$, we restart $Z(\cdot)$ at time $T_5(M_1)$
and then apply (\ref{eq2.4.4}) of Lemma \ref{lem2.4.2}.  Using the definition of $T_5(M_1)$, for $k=1$,
and (\ref{eq4.8.5}), with $\epsr \stackrel{\text{def}}{=}\tfrac{1}{20}\delta_3 \epsk \le \epsk$, for $k=6$,
one obtains
\begin{equation}
\label{eq6.8.3}
P(Z_k(T_{5}^{\prime}(M_1)) \ge 5\epsk M_1) \le \ssk e^{-\epsilon^{\prime}M_1} \qquad \text{for } k=1,6,%
\end{equation}
for appropriate $\ssk$ and $\epsilon^{\prime} > 0$.  Inequality (\ref{eq4.8.6}) then follows from
(\ref{eq2.7.3}) of Lemma \ref{lem2.7.1}

In order to demonstrate (\ref{eq4.8.7}), we restart $Z(\cdot)$ at time $T_5(M_1)$, denoting the new 
process by $\tilde{Z}(\cdot)$ and the corresponding Brownian motion by $\tilde{B}(\cdot)$.  By (\ref{eq1.2.1}),
$$ \tilde{Z}_1(t) \ge \epsk M_1 - t + \tilde{B}_1(t), $$
and so $\tilde{Z}_1(t) > 0$ on $[0,\tfrac{1}{2}\epsk M_1]$ off of a set of exponentially small
probability in $M_1$.  Since $\epsr < \tfrac{1}{10} \delta_3 \epsk$, it follows, by (\ref{eq2.10.3}) of
Lemma \ref{lem2.10.2} and (\ref{eq4.8.5}), that
\begin{equation}
\label{eq6.9.1}
\tilde{Y}_k(\tfrac{1}{2}\epsk M_1) \ge (\tfrac{1}{10}\epsk - \tfrac{\epsr}{\delta_3})M_1 > 0 
\qquad \text{for } k=2,\ldots,5,
\end{equation}
off of a set $\ffk$ of exponentially small probability in $M_1$.

We denote by $\tilde{\tau}_k$ the first time at which $\tilde{Z}_k(t) = 0$.  Restarting the process at
$\tilde{\tau}_k$, it follows from (\ref{eq2.6.1}) of Lemma \ref{lem2.5.2} that
$$ E[\tilde{Z}_k(\tfrac{1}{2}\epsk M_1)^2; \, \ffk^c] \le 24\cdot8 \epsk M_1 \qquad \text{for } k=2,\ldots,5. $$
On account of the upper bounds on $P(\ffk)$ and (\ref{eq2.7.2}) of Lemma \ref{lem2.7.1}, one obtains
\begin{equation}
\label{eq6.9.2}
E[\tilde{Z}_k(\tfrac{1}{2}\epsk M_1)^2] \le 24\cdot8 \epsk M_1 + \ssj \qquad \text{for } k=2,\ldots,5
\end{equation}
for appropriate $\ssj$, which depends on $\epsk$ but not on $M_1$.  This implies (\ref{eq4.8.7}).
\end{proof}
\end{document}